\newtheorem{theorem}{Theorem}[section]
\newtheorem{conjecture}[theorem]{Conjecture}
\newtheorem{proposition}[theorem]{Proposition}
\newtheorem{lemma}[theorem]{Lemma}
\newtheorem{corollary}[theorem]{Corollary}
\theoremstyle{definition}
\newtheorem{definition}[theorem]{Definition}
\newtheorem*{remark*}{Remark}
\newtheorem{remark}[theorem]{Remark}
\numberwithin{equation}{section}
\newcommand{\cH}{\mathcal{H}}
\newcommand{\R}{\mathbb{R}}
\newcommand{\RP}{\mathbb{R}\mathbb{P}}
\newcommand{\N}{\mathbb{N}}
\newcommand{\Z}{\mathbb{Z}}
\newcommand{\eps}{\varepsilon}
\newcommand{\Ric}{\mathrm{Ric}}
\newcommand{\length}{\mathrm{length}}
\newcommand{\red}[1]{\textcolor{red}{#1}}
\newcommand{\blue}[1]{\textcolor{blue}{#1}}
\newcommand{\purple}[1]{\textcolor{purple}{#1}}
\newcommand{\nl}{\newline}
\newcommand{\p}{\partial}
\newcommand{\Geo}{\textbf{\text{Geo}}}
\newcommand{\ord}{\text{ord}}
\newcommand{\sauce}{\text{\textcolor{red}{source}}}
\title{Index, Intersections, and Multiplicity of Min-Max Geodesics}
\author{Jared Marx-Kuo, Lorenzo Sarnataro, Douglas Stryker}
\begin{document}

\begin{abstract}
    We prove upper bounds for the Morse index and number of intersections of min-max geodesics achieving the $p$-width of a closed surface. A key tool in our analysis is a proof that for a generic set of metrics, the tangent cone at any vertex of any finite union of closed immersed geodesics consists of exactly two lines. 
 
    We also construct examples to demonstrate that multiplicity one does not hold generically in this setting. Specifically, we construct an open set of metrics on $S^2$ for which the $p$-width is only achieved by $p$ copies of a single geodesic.
\end{abstract}
\maketitle
\tableofcontents
\section{Introduction}
For a closed Riemannian manifold $(M^{n+1}, g)$ of dimension $n+1$, the \emph{volume spectrum} is a sequence of geometric invariants $\{\omega_p(M, g)\}_{p\in\N}$ introduced by Gromov \cite{gromov2002isoperimetry, gromov2006dimension, gromov2010singularities}, called the \emph{$p$-widths}. This sequence is a nonlinear analog of the spectrum of the Laplacian operator. The volume spectrum plays an essential role in many significant breakthroughs in the study of minimal hypersurfaces, including the resolution of Yau's conjecture on the existence of infinitely many closed embedded minimal hypersurfaces in any closed ambient manifold $(M^{n+1}, g)$ of dimension $3 \leq n+1 \leq 7$ achieved by \cite{song2018existence}. We refer the reader to \cite{almgren1962homotopy, pitts1981existence, marques2014min, MN_index_upper, marques2017existence,  liokumovich2018weyl, irie2018density, marques2021morse, zhou2020multiplicity, gaspar2018allen, gaspar2019weyl, gaspar2020index, chodosh2020minimal, Dey} for the historical developments of this program.

The reason for the utility of the volume spectrum in the study of minimal hypersurfaces is the fact that (when $3 \leq n+1 \leq 7$) each $p$-width equals the weighted area of a smooth closed embedded \emph{min-max} minimal hypersurface: namely, there are disjoint connected smooth closed embedded minimal hypersurfaces $\{\Sigma_{p,j}\}_{j=1}^{N(p)}$ and positive integers $\{m_{p,j}\}_{j=1}^{N(p)}$ so that
\begin{equation}\label{eqn:rep_intro} \omega_p(M, g) = \sum_{j=1}^{N(p)} m_{p,j}\mathrm{Area}(\Sigma_{p,j}). \end{equation}

By analogy with classical Morse theory, the results of \cite{MN_index_upper} imply that the minimal hypersurface achieving the $p$-width from \eqref{eqn:rep_intro} can be chosen to additionally satisfy the index bound
\begin{equation}\label{eqn:index_intro} \sum_{j=1}^{N(p)} \mathrm{index}(\Sigma_{p,j}) \leq p, \end{equation}
where $\mathrm{index}(\Sigma)$ is the Morse index of the minimal hypersurface $\Sigma$ (meaning the maximal dimension of a linear subspace of normal variations on which the second variation of area is negative definite).

Due to an example of \cite{WangZhou_mult2}, the possibility of multiplicities $m_{p,j}$ greater than 1 in \eqref{eqn:rep_intro} is unavoidable in general. However, as a consequence of the resolution of the multiplicity one conjecture due to \cite{chodosh2020minimal, zhou2020multiplicity}, which asserts that $\Sigma_{p,j}$ is 2-sided and the multiplicities $m_{p,j}$ are equal to 1 for a generic set of metrics on $M$, the minimal hypersurface achieving the $p$-width can be chosen to satisfy the stronger \emph{weighted} index bound
\begin{equation}\label{eqn:weighted_index_intro} \sum_{\Sigma_{p,j}\ \text{2-sided}} m_{p,j}\mathrm{index}(\Sigma_{p,j}) + \sum_{\Sigma_{p,j}\ \text{1-sided}} \frac{m_{p,j}}{2}\mathrm{index}(\Sigma_{p,j}) \leq p. \end{equation}

These results on the Morse index and multiplicity of representatives of the $p$-widths only apply to ambient manifolds of dimension $3 \leq n+1 \leq 7$. The dimension upper bound is due to the existence of singularities. The dimension lower bound is due to the fact that min-max on surfaces may only produce stationary geodesic networks in general (see \cite{pitts1973net}), in which case the techniques to control index and multiplicity fail (see \cite[Remark 1.1]{MN_index_upper}). However, in a recent breakthrough, Chodosh--Mantoulidis used the Allen--Cahn min-max framework (with the \emph{sine-Gordon} potential) to show that when $n+1 = 2$, the $p$-widths are realized by unions of closed immersed geodesics:

\begin{restatable}[\cite{chodosh2023p}]{thmm}{CMGeodesicThm}\label{CMGeodesicThm}
Let $(M^2, g)$ be a closed Riemannian surface. For every $p \in \N$, there is a collection of closed immersed geodesics $\{\sigma_{p, j}\}_{j=1}^{N(p)}$ and positive integers $\{m_{p,j}\}_{j=1}^{N(p)}$ satisfying 
\begin{equation}\label{eqn:rep_curves}
\omega_p(M, g) = \sum_{j = 1}^{N(p)} m_{p,j}\mathrm{length}_g(\sigma_{p,j}).
\end{equation}
\end{restatable}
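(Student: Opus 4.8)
The plan is to run an Allen--Cahn min--max scheme with the \emph{sine--Gordon} potential: fix a smooth $1$-periodic potential $W \colon \R \to [0,\infty)$ vanishing exactly at the integers, each a nondegenerate well, and for $\eps > 0$ set $E_\eps(u) = \int_M \big( \tfrac{\eps}{2}|\nabla u|^2 + \tfrac{1}{\eps} W(u) \big)\, d\vol_g$ on $u \in W^{1,2}(M)$. Write $\sigma = \int_0^1 \sqrt{2W(t)}\, dt$ for the energy of one transition between consecutive wells. First I would set up, in the style of Guaraco and Gaspar--Guaraco, the $p$-th Allen--Cahn min--max value $c_\eps(p)$ over $p$-dimensional min--max families of functions, and record the standard facts: $c_\eps(p)$ is finite, positive, and bounded above and below uniformly in $\eps$ in terms of $\omega_p(M,g)$; and there is a critical point $u_\eps$ of $E_\eps$, i.e.\ a solution of $\eps^2\Delta u_\eps = W'(u_\eps)$, with $E_\eps(u_\eps) = c_\eps(p)$ and Morse index at most $p$. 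The essential feature of the periodic potential is that transition layers of $u_\eps$ can be stacked \emph{monotonically}, which is what will force the limiting object to be a union of honest closed geodesics.

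Next I would pass to the limit $\eps \to 0$. By the Hutchinson--Tonegawa compactness and interior regularity theory for Allen--Cahn, together with its refinements under an index bound, after passing to a subsequence the energy measures $\sigma^{-1}\big(\tfrac{\eps}{2}|\nabla u_\eps|^2 + \tfrac1\eps W(u_\eps)\big)\, d\vol_g$ converge to a stationary integral $1$-varifold $V$ on $(M,g)$ of mass bounded in terms of $\omega_p(M,g)$, and the index bound descends to $V$. A priori $V$ is only a \emph{stationary geodesic network}: a finite union of geodesic arcs meeting at finitely many junction points where the unit tangent directions, weighted by integer multiplicities, sum to zero.

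The heart of the argument---and the reason for using the periodic potential rather than a double--well---is to rule out genuine junctions, upgrading $V$ to a finite union of closed \emph{immersed} geodesics. Here I would exploit the monotone structure: locally, after subtracting an appropriate integer, the level sets $\{u_\eps = k + \tfrac12\}$ are pairwise disjoint embedded closed curves \emph{linearly ordered} by $k$, so they behave like the level sets of a function with no critical points in the relevant range. Combining this ordering with the curvature estimates coming from the equation and with the index bound, a blow--up analysis at a candidate junction point produces an entire, finite--index solution of the sine--Gordon Allen--Cahn equation on $\R^2$ whose interfaces form an ordered family of properly embedded curves; such a configuration must be one--dimensional, i.e.\ a stack of parallel straight transitions, so no junction can form. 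Therefore the tangent cone of $V$ at every point is a single line with multiplicity, which is exactly the statement that $\mathrm{supp}(V) = \bigcup_{j=1}^{N(p)} \sigma_{p,j}$ for closed immersed geodesics $\sigma_{p,j}$ and that $V = \sum_{j=1}^{N(p)} m_{p,j}\,[\sigma_{p,j}]$ with $m_{p,j} \in \N$; here $N(p) < \infty$ and the $m_{p,j}$ are finite since the total length $\sum_j m_{p,j}\length_g(\sigma_{p,j}) = \|V\|(M)$ is bounded above and $(M,g)$ has positive systole.

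Finally I would identify $\sigma^{-1} c_\eps(p) \to \omega_p(M,g)$. The bound $\limsup_{\eps\to 0}\sigma^{-1}c_\eps(p) \le \omega_p(M,g)$ comes from converting a near--optimal Almgren--Pitts $p$-sweepout into an Allen--Cahn $p$-sweepout, replacing each stationary geodesic network by a thin transition layer of energy $\approx \sigma\cdot\length_g$, as in Gaspar--Guaraco; the reverse bound $\liminf_{\eps\to 0}\sigma^{-1}c_\eps(p) \ge \omega_p(M,g)$ comes from slicing Allen--Cahn $p$-sweepouts into their interfaces to produce Almgren--Pitts $p$-sweepouts, giving $\|V\|(M) \ge \omega_p(M,g)$; with the energy bound this forces equality. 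Combining with the structure of $V$ above,
\[
\omega_p(M,g) = \sigma^{-1}\lim_{\eps\to 0} c_\eps(p) = \|V\|(M) = \sum_{j=1}^{N(p)} m_{p,j}\,\length_g(\sigma_{p,j}),
\]
which is \eqref{eqn:rep_curves}. I expect the third step to be the main obstacle: ruling out junctions in the limit varifold is exactly what fails for the double--well potential on a surface, and it is there that the special structure of the periodic potential, in combination with the index bound, must be used in an essential way.
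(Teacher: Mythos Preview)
The paper does not itself prove this statement; it is quoted from Chodosh--Mantoulidis, and the paper's ``proof'' amounts to assembling the Allen--Cahn min--max framework (\S2), Dey's comparison $c_p=\omega_p$, and the sine--Gordon regularity theorem (Theorem~\ref{thm:sine-gordon}), after which it remarks that the result ``follows directly.'' Your outline traces the same route and correctly isolates the junction--exclusion step as the heart of the matter.

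That said, your description of the mechanism in that step is wrong in a way that matters. You assert that the blow--up at a candidate junction ``must be one--dimensional, i.e.\ a stack of parallel straight transitions,'' and then that ``the tangent cone of $V$ at every point is a single line with multiplicity.'' If that were true, the limit would be a union of \emph{embedded} closed geodesics, which is strictly stronger than the theorem and false in general (the whole point of the paper is to analyze the self--intersections that do occur). Entire finite--index sine--Gordon solutions on $\R^2$ are \emph{not} all one--dimensional: there are genuinely two--dimensional solutions whose interface set is asymptotic to two crossing lines (four--ended solutions, for instance). The actual input---which the paper highlights but you omit---is the Liu--Wei classification of entire sine--Gordon solutions on $\R^2$; it shows that any such finite--index solution has an interface set asymptotic to a finite union of \emph{complete lines} through the origin. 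This gives tangent cones of $V$ that are unions of lines, ruling out triple junctions and their ilk while still permitting transverse crossings, which is precisely what ``union of closed \emph{immersed} geodesics'' means. Your monotone--ordering heuristic for the level sets does not by itself force this structure; the Liu--Wei theorem is the essential black box you are missing, and your conclusion about tangent cones should be weakened accordingly.
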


Using this regularity theory as a starting point, we investigate extensions of the various well-known aspects of Morse index and multiplicity in min-max theory to the setting of curves in surfaces.

An important preliminary observation is that these min-max geodesics are not necessarily embedded, and self-intersections are expected to contribute to a proper accounting of index or instability in this setting. We let $\mathrm{Vert}(\{\sigma_{p,j}\}_j)$ denote the set of points $v \in M^2$ so that the tangent cone of $\bigcup_j \sigma_{p,j}(S^1)$ at $v$ consists of at least two distinct lines, and we let $\mathrm{ord}(v)$ denote the number of distinct lines in the tangent cone of $\bigcup_j \sigma_{p,j}(S^1)$ at $v \in \mathrm{Vert}(\{\sigma_{p,j}\}_j)$.

Specifically, we address the following two questions as generalizations of \eqref{eqn:index_intro} and \eqref{eqn:weighted_index_intro}:
\begin{enumerate}
    \item \label{ite:Q1} \cite[Open Question 1]{chodosh2023p} Can the union of closed immersed geodesics achieving the $p$-width in \eqref{eqn:rep_curves} be chosen to satisfy
    \begin{equation} \label{CMIndexEq}
    \sum_{j=1}^{N(p)} \mathrm{index}(\sigma_{p,j}) + \sum_{v \in \mathrm{Vert}(\{\sigma_{p,j}\}_j)} \binom{\mathrm{ord}(v)}{2} \leq p?
    \end{equation}
    \item \label{ite:Q2} Does $m_{p,j} = 1$ hold for a generic set of metrics on $M^2$? In particular, can \eqref{CMIndexEq} be upgraded to a weighted bound accounting for multiplicities, as in \eqref{eqn:weighted_index_intro}?
\end{enumerate}

\subsection{Main results}
Our main results touch on the questions just posed.

An essential ingredient in our index analysis is a generic metric theorem of independent interest. We show that generically, the order of any vertex of a finite union of closed immersed geodesics is 2 (i.e. at any intersection point, the tangent cone consists of two distinct lines in $\R^2$, 
intersecting transversely). 
In \S\ref{GenericInterSec}, we show (see Theorem \ref{GenSelfIntFull} for the full statement and Figure \ref{fig:genInt} for visualization):
%
\begin{restatable}{thmm}{GenSelfIntThm}\label{GenSelfIntThm}
    Let $M^2$ be a closed surface. The set of metrics on $M^2$ with the following property is $C^k$-generic in the Baire sense for all $k \geq 3$: the support of the tangent cone at any point of any finite union of closed immersed geodesics consists of at most two distinct lines.
%
\end{restatable}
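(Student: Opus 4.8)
The plan is to realize the desired property as a countable intersection of open dense sets in the space of $C^k$ metrics, using a transversality/bumpy-metric style argument. First I would reduce the statement to a local one: a point $v$ where the tangent cone of a finite union of closed immersed geodesics has three or more distinct lines is a point through which at least three geodesic ``strands'' pass with pairwise distinct tangent directions. Since any geodesic strand through $v$ with a given unit tangent direction is uniquely determined (by the geodesic ODE), a vertex of order $\geq 3$ is witnessed by a triple of distinct directions $(u_1,u_2,u_3)$ at $v$ such that the three geodesics $\gamma_{v,u_1},\gamma_{v,u_2},\gamma_{v,u_3}$ all close up (are periodic). The key point is a dimension count: the condition ``$\gamma_{v,u}$ is a closed geodesic of length $\leq L$'' cuts out (for generic $g$) a codimension-$2$ condition in the $3$-dimensional unit tangent bundle $SM$ (the relevant space of $(v,u)$), so its solution set is $1$-dimensional — a union of curves, which is exactly the family of closed geodesics. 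Asking for three such geodesics to pass through a common point $v$ then imposes, heuristically, $2+2 = 4$ extra conditions (matching the base points of the second and third geodesic to that of the first) on a parameter space of dimension $1+1+1 = 3$, so generically there is no solution.

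The rigorous version would proceed by a Sard–Smale / Bumpy Metric Theorem argument. I would fix a length bound $L$ and a number $n$ of closed geodesics, and consider the universal space of $n$-tuples of closed immersed geodesics of length $\leq L$ over the space of metrics, which by White's bumpy metric theorem machinery \cite{white1991space,white2017bumpy} is (after passing to a residual set of metrics where all such geodesics are nondegenerate) a manifold that fibers over metric space with fibers of the expected dimension. Then I would define the ``bad set'' as the locus where some point lies on three of the $n$ curves with distinct tangents, exhibit it as the image of a map from a manifold of dimension strictly less than the dimension of metric space (because of the $-4$ versus $+3$ count above, a net deficit of $1$), and conclude by Sard–Smale that for a residual set of metrics the bad set is empty for that $(L,n)$. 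Taking the intersection over $L \in \N$ and $n \in \N$ gives a residual (hence $C^k$-Baire-generic) set of metrics with no vertex of order $\geq 3$. One then also needs the elementary observation that at any vertex each individual immersed geodesic contributes its single tangent line (a smooth immersed curve cannot have a tangent cone with two lines at a point in its interior, and a self-intersection of one geodesic contributes two strands which are counted separately), so ``order $\geq 3$'' is genuinely equivalent to ``three strands with distinct directions.''

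The main obstacle I anticipate is making the transversality argument uniform and clean in the presence of three separate geodesics sharing a point, and in particular handling the degenerate configurations: the three geodesics could be arcs of the same closed geodesic (a multiple self-intersection), or two of them could coincide, or the ``third direction'' could be forced by a symmetry. One has to set up the parametrized moduli space so that perturbations of the metric supported near $v$ genuinely move the three closing conditions independently; this is where the nondegeneracy (bumpiness) of the geodesics and a careful choice of variation fields localized along disjoint arcs of the three strands enters. A secondary technical point is the passage from ``order exactly $\geq 3$ excluded for each fixed $(L,n)$'' to the full statement, which requires noting that any finite union of closed immersed geodesics has total length bounded by some $L$ and consists of finitely many components, so the countable intersection over $(L,n)$ suffices; and that each of the countably many sets is not just residual but can be arranged to be open and dense (openness follows from the nondegeneracy and a compactness argument bounding geodesics of length $\leq L$, density from the transversality perturbation). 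I would also remark that the same scheme, with the codimension count $2+2 > 3$ replaced by an equality-type count, is exactly why order $=2$ is the generic — and unavoidable — value, matching the $\binom{\ord(v)}{2}$ term appearing in \eqref{CMIndexEq}.
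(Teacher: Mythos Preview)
Your outline is a reasonable alternative to the paper's proof, but it takes a genuinely different route and leaves the central step unproved.

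The paper does \emph{not} run a Sard--Smale transversality argument on a universal moduli space. Instead it argues constructively: starting from a bumpy metric (so that there are only finitely many closed primitive geodesics of length $\leq 2L$), it takes each vertex $v$ of order $d\geq 3$ and exhibits, by hand, a small conformal perturbation of the metric supported in a tiny ball around $v$ that slides one of the $d$ strands off the vertex while leaving the other $d-1$ strands (and all geodesics outside the ball) untouched. Concretely, one replaces a short arc $\gamma_1$ through $v$ by a nearby $C^{k+2}$-close curve $\gamma_1^*$ missing $v$, and then chooses a conformal factor $e^{2f}$ with $\partial_n f = -k_{\gamma_1^*}$ along $\gamma_1^*$ so that $\gamma_1^*$ is geodesic for the new metric; the support of $f$ is kept away from the other strands. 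Iterating reduces every vertex to order $2$. A compactness argument (bumpiness $+$ Jacobi-field contradiction) then shows no new geodesics of length $\leq L$ appear, giving density; openness is a separate elementary argument, and one intersects over $L\in\N$.

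Your dimension count $3-4=-1$ is correct and your identification of the main obstacle is exactly right: you need that metric perturbations move the three strands \emph{independently} near $v$. But this is precisely the content of the paper's conformal-change lemma, and you have not supplied it. In a Sard--Smale scheme this would appear as a submersion condition on the evaluation map from the universal moduli space of triples of closed geodesics to $M\times M\times M$ (or equivalently to the diagonal), and verifying it requires producing, for each strand, a metric variation that deflects that strand at $v$ without moving the others --- essentially the same local construction the paper carries out. So your proposal is correct in spirit but not yet a proof: the missing ingredient is equivalent in difficulty to what the paper does explicitly. The payoff of the paper's approach is that it is entirely self-contained and avoids the Fredholm/Banach-manifold machinery (and the attendant care with multiple self-intersections of a single geodesic that you flagged); the payoff of your approach, if completed, would be a cleaner packaging that also makes the ``expected codimension'' heuristic transparent and might more readily generalize.
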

%


%
We can use \Cref{GenSelfIntThm} to provide a partial answer to Question \ref{ite:Q1}:
%
\begin{restatable}{thmm}{thm:index_upper}\label{thm:index_upper}
    Let $(M^2, g)$ be a closed smooth Riemannian surface. For every $p \in \N$, there is a collection of closed immersed geodesics $\{\sigma_{p, j}\}_{j=1}^{N(p)}$ and positive integers $\{m_{p,j}\}_{j=1}^{N(p)}$ satisfying
    \begin{equation*}\label{eq:achieve_width}
    \omega_p(M, g) = \sum_{j=1}^{N(p)} m_{p,j}\mathrm{length}_g(\sigma_{p,j})
    \end{equation*}
    and
    \begin{align*} 
    (a)& \quad\quad\ \, \sum_{j = 1}^{N(p)} \mathrm{index}(\sigma_{p,j}) \leq p, \\ 
    (b)& \quad \sum_{v \in \mathrm{Vert}(\{\sigma_{p,j}\}_j)} \begin{pmatrix}\mathrm{ord}(v) \\ 2 \end{pmatrix} \leq p.
    \end{align*}
    %
\end{restatable}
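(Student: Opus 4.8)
The plan is to combine the Allen--Cahn min-max regularity machinery of Chodosh--Mantoulidis (Theorem~\ref{CMGeodesicThm}) with the genericity statement Theorem~\ref{GenSelfIntThm} and the Allen--Cahn index bound (Theorem~\ref{ACIndex}), adapted to the $n+1=2$ sine-Gordon setting. The two bounds $(a)$ and $(b)$ should ultimately both flow from a single index inequality for the sequence of Allen--Cahn critical points $u_k$ realizing $\omega_p$, of the form $\limsup_k \mathrm{index}_{E_{\eps_k}}(u_k) \leq p$; the point is that the limit varifold $V = \sum_j m_{p,j}|\sigma_{p,j}|$ has a ``smooth part'' (the immersed geodesics, counted with multiplicity) and a ``singular part'' (the vertices), and a localized second-variation analysis should show that each contributes nonnegatively and independently to the index. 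So the first step is to reduce everything to the following: produce test functions for the Allen--Cahn second variation that are supported near the smooth arcs of $\bigcup_j \sigma_{p,j}$ and realize $\sum_j \mathrm{index}(\sigma_{p,j})$ negative directions, and a \emph{disjointly supported} family of test functions, one for each pair of sheets crossing at each vertex $v$, realizing $\binom{\mathrm{ord}(v)}{2}$ further negative directions.

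Second, I would handle the index-of-geodesics contribution essentially as in the proof of Theorem~\ref{ACIndex}: away from the vertices, the phase transitions converge smoothly (in the sine-Gordon one-dimensional profile sense, with multiplicity $m_{p,j}$ along $\sigma_{p,j}$) to the geodesics, and negative directions of the Jacobi operator on each $\sigma_{p,j}$ lift to negative directions of the Allen--Cahn second variation localized in a tubular neighborhood, by the standard ``recovery sequence'' argument. The key new input is that by Theorem~\ref{GenSelfIntThm} we may assume --- after a generic perturbation of the metric, which does not change the validity of Theorem~\ref{CMGeodesicThm} and only changes $\omega_p$ continuously --- that every vertex has order exactly $2$, so the singular set is a finite collection of transverse double points. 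Near such a point the varifold looks (after blow-up) like two straight lines crossing transversely, each with a multiplicity, and one can build a nonzero test function concentrated at the crossing (morally, the ``interaction energy'' between two nearly-parallel phase transition layers is negative, the reverse-catenoid-type instability) that is negative for the second variation and supported in an arbitrarily small neighborhood of $v$; disjointness of these neighborhoods across distinct vertices gives the direct sum, so $\sum_v \binom{\mathrm{ord}(v)}{2} = \sum_v 1 = \#\mathrm{Vert} \leq p$. More care is needed if one wants $(a)$ and $(b)$ \emph{simultaneously with the same $p$}: for $(b)$ alone it suffices to count one negative direction per vertex, and for the combined statement in the literature one typically proves the unified bound $\sum_j \mathrm{index}(\sigma_{p,j}) + \sum_v \binom{\mathrm{ord}(v)}{2} \le p$ (which is exactly \eqref{CMIndexEq}, and implies both $(a)$ and $(b)$), by checking that the geodesic test functions and the vertex test functions have mutually disjoint supports; I would aim to prove that stronger statement and deduce $(a),(b)$ as immediate corollaries.

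Third, I would address the interface between Allen--Cahn index bounds and the min-max construction: one needs that the $\omega_p$-realizing varifold arises as a limit of Allen--Cahn critical points with Morse index $\le p$ \emph{and} the limit is exactly the geodesic network from Theorem~\ref{CMGeodesicThm}. This is precisely what the Chodosh--Mantoulidis sine-Gordon min-max produces, so I would cite their construction for the existence of the approximating sequence $\{u_k\}$ with $\limsup E_{\eps_k}(u_k) = \omega_p$ and $\limsup \mathrm{index}(u_k) \le p$, and then invoke the genericity theorem to upgrade the limit to a transverse network, and finally run the localized second-variation argument above on that specific sequence. A technical point to be careful about: the genericity perturbation must be applied at the level of the metric \emph{before} running min-max, and one must argue that the min-max geodesic network for the perturbed metric still satisfies the conclusions, i.e., the two generic conditions (transverse vertices; and whatever bumpiness is needed so that indices are well-defined and stable) are simultaneously generic — this is a routine intersection-of-residual-sets argument once Theorem~\ref{GenSelfIntThm} is in hand.

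The main obstacle I expect is the vertex analysis: rigorously producing a negative second-variation direction concentrated at a transverse double point in the Allen--Cahn (sine-Gordon) setting, and proving it is genuinely linearly independent from (indeed orthogonal to, or at least disjointly supported from) the geodesic directions. In the smooth hypersurface theory there is no singular set, so this is a genuinely new phenomenon; the heuristic is a ``reverse catenoid''/log-cutoff instability localized at the crossing of two sheets, and making this quantitative --- uniformly as $\eps_k \to 0$, and with the correct combinatorial count $\binom{\mathrm{ord}(v)}{2}$ being attained rather than just bounded --- is where the real work lies. A secondary difficulty is bookkeeping when a geodesic $\sigma_{p,j}$ passes through its \emph{own} self-intersection points: the test functions for $\mathrm{index}(\sigma_{p,j})$ (defined via the Jacobi operator on the immersed circle, i.e., the abstract domain $S^1$, not the image) and the vertex test functions live on genuinely different domains, and one must check the supports can still be separated in $M$ after pulling back.
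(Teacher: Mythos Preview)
Your proposal diverges from the paper's proof in two substantial ways, and one of them is a genuine gap.

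\textbf{Vertex bound.} You propose constructing an explicit negative second-variation direction concentrated at each transverse crossing (a ``reverse-catenoid'' instability), and you correctly flag this as the main obstacle. The paper sidesteps this entirely: the vertex bound $(b)$ follows from Tonegawa's local stability result \cite{tonegawa2005stable,TonegawaWickramasekera,guaraco2018min}. If the limit had more than $p$ vertices, take $p+1$ pairwise disjoint balls around them; since $\mathrm{index}_{E_{\eps_k}}(u_k) \leq p$, along a subsequence $u_k$ is $E_{\eps_k}$-stable in at least one of these balls, and then the limit varifold is smoothly embedded there, contradicting the presence of a vertex. No test-function construction at crossings is needed. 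The upgrade from $\#\mathrm{Vert}\le p$ to $\sum_v\binom{\mathrm{ord}(v)}{2}\le p$ is then pure bookkeeping: under the genericity Theorem~\ref{GenSelfIntThm} every vertex has order $2$, so the two quantities coincide, and one passes to arbitrary metrics by lower semicontinuity of the weighted vertex count (Lemma~\ref{lem:weighted_vertex_lower_semi}).

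\textbf{Index bound.} Your plan to lift negative Jacobi directions via the argument of \cite{gaspar2020index} has an obstruction you do not address: a normal vector field along an \emph{immersed} geodesic does not in general extend to an ambient vector field on $M$, because the values coming from the two branches at a self-intersection disagree. The paper's key observation (Lemma~\ref{lem:extension}) is that at an order-$2$ vertex one can add a \emph{tangential} component to each branch so that the resulting vectors agree at the crossing, and then extend to an ambient field; since the second variation of length depends only on the normal part (Proposition~\ref{prop:variation_formulas}), this leaves $Q_\Gamma$ unchanged. With this extension in hand, Gaspar's argument goes through with an extra error term that Lemma~\ref{lem:extension} makes arbitrarily small. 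Your ``localized in a tubular neighborhood, with supports disjoint from the vertex test functions'' formulation cannot work as stated: the Jacobi eigenfunctions live on the abstract $S^1$ and are not supported away from the crossings, so cutting them off there is not free.

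\textbf{The combined bound.} You aim for the unified inequality $\sum_j \mathrm{index}(\sigma_{p,j}) + \sum_v \binom{\mathrm{ord}(v)}{2} \leq p$ and deduce $(a)$, $(b)$ as corollaries. The paper proves $(a)$ and $(b)$ \emph{separately}, by the two distinct mechanisms above, and explicitly leaves the combined bound \eqref{CMIndexEq} for future work. Your route would require exactly the vertex test functions the paper does not construct.
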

%
We remark that equation (b) shows that the union $\{\sigma_{p,j}\}_j$ has at most $p$ vertices. However, we emphasize that the stronger, full bound of equation (b) is a consequence of working in the generic setting afforded by \Cref{GenSelfIntThm}. We hope to address the entirety of equation \eqref{CMIndexEq} in future work.
%
%

We emphasize that the bounds in \Cref{thm:index_upper} do not account the multiplicities $m_{p,j}$. Indeed, we demonstrate that higher multiplicity can occur in an open set of metrics, providing a negative answer to Question \ref{ite:Q2}:
\begin{restatable}{thmm}{HigherMultCounter} \label{HigherMultCounter}
For any $p \in \N$, there exists an open set of metrics $U_p$ on $S^2$, such that for any $g \in U_p$
\begin{enumerate}
    \item $K_g > 0$,
    \item $\omega_l(S^2, g) = 2 \pi l$ for all $l =1,\dots, p,$
    \item each $\omega_l$ can only be achieved by $l$ copies of the same, nondegenerate geodesic for $l = 1,\hdots, p$.
\end{enumerate}
\end{restatable}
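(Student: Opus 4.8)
The plan is to start from a Zoll metric on $S^2$ — most naturally the round metric $g_0$, for which $\omega_l(S^2,g_0)=2\pi l$ for all $l$, and every $\omega_l$ is achieved by $l$ copies of a great circle — and perturb it to destroy the abundance of short geodesics while retaining a single nondegenerate closed geodesic of length exactly $2\pi$ that continues to sweep out all of the first $p$ widths. Rather than the round metric itself (whose geodesics are all closed, hence highly degenerate), I would take as the base point of the open set an ellipsoid-type metric of revolution, or more precisely a metric obtained by a rotationally symmetric perturbation supported away from a fixed equator $\gamma$, arranged so that: (i) $K_g>0$ everywhere; (ii) $\gamma$ remains a geodesic and is \emph{nondegenerate} (its Jacobi operator $-\tfrac{d^2}{ds^2}-K_g$ along $\gamma$ has no kernel, which one checks via the Poincar\'e map / second variation of length); and (iii) every other closed geodesic has length strictly greater than $2\pi p$ — or at least strictly greater than $\omega_p$ — so that it is too long to interfere. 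Curvature positivity plus a pinching/comparison estimate (Klingenberg-type injectivity radius and length bounds, or Toponogov) is what forces all other closed geodesics to be long once the metric is a small perturbation of round; this is the analytic heart of the construction.

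Next I would pin down the $p$-widths. The upper bound $\omega_l(S^2,g)\le 2\pi l$ comes from an explicit sweepout: the $l$-parameter family built from $\gamma$ and its "degenerations," exactly as in Chodosh--Mantoulidis's computation of the volume spectrum of the round sphere — their sweepout is essentially rotationally symmetric and can be transplanted to any metric of revolution close to round, giving $\omega_l(S^2,g)\le l\cdot\mathrm{length}_g(\gamma)=2\pi l$. For the matching lower bound $\omega_l(S^2,g)\ge 2\pi l$ I would use the Lusternik--Schnirelmann / Liokumovich--Marques--Neves--type lower bound together with the fact that by construction the \emph{only} stationary geodesic network of length $\le 2\pi p$ available to the min-max scheme is a union of copies of $\gamma$ (and there are no stationary networks with "bad" vertices thanks to the length barrier and to \Cref{GenSelfIntThm}-type transversality, though genericity is not even needed here because the perturbation is explicit). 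Combined with Theorem \ref{CMGeodesicThm}, $\omega_l$ is realized by $\sum_j m_{l,j}\,\mathrm{length}_g(\sigma_{l,j})$ where each $\sigma_{l,j}$ is a closed geodesic of length $\le\omega_l\le 2\pi p$; by (iii) every $\sigma_{l,j}$ must be $\gamma$ (up to reparametrization), so $\omega_l=2\pi l$ forces the multiplicity to be exactly $l$. This establishes (1)--(3) for the single metric $g$.

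To upgrade to an \emph{open} set $U_p$, I would argue by stability of each ingredient under $C^k$ ($k\ge 3$) perturbation. Nondegeneracy of $\gamma$ is an open condition (the Poincar\'e map's eigenvalues vary continuously), so $\gamma$ persists as a nearby nondegenerate closed geodesic $\gamma_g$ of length $\mathrm{length}_g(\gamma_g)\to 2\pi$; positivity $K_g>0$ is open; the length barrier "every other closed geodesic has length $>\omega_p+\delta$" is open by a standard compactness argument (a sequence of counterexamples would produce, in the limit, a short closed geodesic of the base metric other than $\gamma$, contradiction) — this is the step I expect to be the main obstacle, since one must ensure no \emph{new} short geodesics or short stationary networks are created by an asymmetric perturbation, which requires either a uniform lower length bound from curvature pinching (Klingenberg) valid on a whole $C^k$-neighborhood of round, or a careful Lyapunov--Schmidt argument near $\gamma$. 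Finally the widths: $\omega_l$ is continuous in the metric, and the sweepout upper bound $\omega_l(S^2,g)\le l\cdot\mathrm{length}_g(\gamma_g)$ together with the network-based lower bound $\omega_l(S^2,g)\ge l\cdot\mathrm{length}_g(\gamma_g)$ (both of which only used the structural facts just shown to be open) force $\omega_l(S^2,g)=l\cdot\mathrm{length}_g(\gamma_g)$ on $U_p$. One then has to be slightly careful: the statement wants $\omega_l=2\pi l$ \emph{exactly}, so I would instead normalize within $U_p$ — either restrict to perturbations fixing $\mathrm{length}_g(\gamma_g)=2\pi$ (a codimension-zero condition after rescaling) or simply observe that rescaling the metric rescales $\gamma_g$ and all $\omega_l$ simultaneously, so the open set can be taken to consist of metrics normalized so that the distinguished geodesic has length $2\pi$; either way conclusions (1)--(3) hold throughout $U_p$.
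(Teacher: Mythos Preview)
Your overall architecture (produce a metric with a single short nondegenerate geodesic $\gamma$, bound $\omega_l$ above by Guth's $l$-fold sweepout, bound it below by showing the only admissible min-max varifolds are multiples of $\gamma$, then argue openness via nondegeneracy and compactness) matches the paper's, but the central step is wrong. The claim ``curvature positivity plus a pinching/comparison estimate \dots\ forces all other closed geodesics to be long once the metric is a small perturbation of round'' is false: for any metric $C^k$-close to the round sphere there are \emph{many} closed geodesics of length close to $2\pi$ (e.g.\ the three principal ellipsoid geodesics and their iterates; Klingenberg only gives a \emph{lower} bound of order $\pi$ on closed geodesic lengths, never an upper bound on their number). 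So the length barrier ``every other closed geodesic has length $>2\pi p$'' cannot be established near the round metric, and the lower bound $\omega_l\ge 2\pi l$ with uniqueness of the achiever breaks down.

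The paper resolves this by going in the \emph{opposite} direction: it takes the elongated ellipsoids $M_k=\{x_1^2+x_2^2+x_3^2/k=1\}$ with $k$ large, which converge locally smoothly to the cylinder $S^1\times\R$. Two geometric facts replace your pinching argument: (a) the level sets $\{x_3=c\}$ are strictly convex toward the equator, so every closed geodesic in $M_k$ must intersect $\gamma_0=\{x_3=0\}$ (a Frankel-type property); (b) on the limiting cylinder the only closed geodesics are horizontal circles, and all normal Jacobi fields along them are constant. Combining these, any sequence of primitive closed geodesics in $M_k$ of length $\le 2\pi p$ converges to a horizontal circle, which by (a) must be $\gamma_0$; if the sequence were not eventually $\gamma_0$ itself, \Cref{lem:Jacobi_fields} would produce a nontrivial Jacobi field on the cylinder that changes sign (since the geodesics cross $\gamma_0$), contradicting (b). This is the missing mechanism that makes the ``only $\gamma_0$ is short enough'' step work. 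Openness then follows exactly as you outline, after ensuring $\gamma_0$ and its iterates are nondegenerate (the paper checks $k^{1/2}\notin\Z$ suffices).
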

Our construction essentially follows the ideas of \cite{WangZhou_mult2}. However, the surface setting does not require the reverse catenoid estimate for degenerate stable minimal hypersurfaces employed by \cite{WangZhou_mult2}, which allows us to construct an example with positive curvature. Since the proof of the weighted index bound \eqref{eqn:weighted_index_intro} relies on the generic multiplicity one theorem of \cite{zhou2020multiplicity}, a weighted version of \eqref{CMIndexEq} cannot be proved by the same method.

%

In \cite{aiex2016width}, an example is constructed of an ellipsoid metric near the round sphere with the property that at least one of $\omega_4(S^2, g)$, $\omega_5(S^2, g)$, and $\omega_6(S^2, g)$ can only be achieved by a multiplicity 2 geodesic. We mention a few novel feature of our construction:
\begin{itemize}
    \item Our conclusion holds for an open set of metrics, which provides a counterexample to a generic multiplicity one result.
    \item Our construction is the first example guaranteeing multiplicity larger than 2.
    \item We find higher multiplicity for $\omega_2$ and $\omega_3$, not only for the higher widths.
    \item Our example exhibits the \emph{worst-case behavior}: the sweepout construction of \cite{guth2009minimax} and the results of \cite{CalabiCao92} imply that the multiplicity of a geodesic achieving the $p$-width for any positive curvature metric on $S^2$ is at most $p$. Our construction therefore provides an example where the multiplicity is maximal.
\end{itemize}

Moreover, our constructions pass to $\R \mathbb{P}^2$ (analogous to \cite[Cor 1.2]{WangZhou_mult2}):
\begin{restatable}{corr}{rptwoCorr} \label{rptwoCorr}
For any $p \in \N$, there exists an open set of metrics $U_p$ on $\R \mathbb{P}^2$, such that for any $g \in U_p$
\begin{enumerate}
    \item $K_g > 0$,
    \item $\omega_l(S^2, g) = 2 \pi l$ for all $l =1,\dots, p,$
    \item each $\omega_l$ can only be achieved by $2l$ copies of the same, nondegenerate, one-sided geodesic for $l = 1,\hdots, p$.
\end{enumerate}
\end{restatable}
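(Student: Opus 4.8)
The plan is to descend the $S^2$ construction of \Cref{HigherMultCounter} to $\RP^2$ through the double cover $\pi\colon S^2\to\RP^2$, in the spirit of \cite[Cor.~1.2]{WangZhou_mult2}. First I would arrange that the open set $U_p$ of \Cref{HigherMultCounter} can be taken inside the class of metrics invariant under the antipodal involution $a\colon S^2\to S^2$. The underlying construction is rotationally symmetric --- say of warped-product form $dr^2+\phi(r)^2\,d\theta^2$ with $\phi$ even --- so that $a\colon(r,\theta)\mapsto(-r,\theta+\pi)$ is an isometry and the distinguished geodesic $\sigma$ (the circle $r=0$, of length $2\pi$) is $a$-invariant; one checks that openness, positive curvature, and all conclusions of \Cref{HigherMultCounter} persist within the $a$-invariant class. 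For such $g$ the quotient metric $\bar g:=g/a$ on $\RP^2$ satisfies $K_{\bar g}>0$ (curvature is local), and $\bar\sigma:=\sigma/a$ is a one-sided simple closed geodesic with $\length_{\bar g}(\bar\sigma)=\pi$ and $\pi^{-1}(\bar\sigma)=\sigma$. Setting $\bar U_p:=\{\bar g\in\mathrm{Met}(\RP^2):\pi^{*}\bar g\in U_p\}$ gives a nonempty open set of metrics on $\RP^2$ (since $\bar g\mapsto\pi^{*}\bar g$ is a continuous embedding onto the $a$-invariant metrics), every element of which has positive curvature and lift lying in $U_p$; in particular $\omega_l(S^2,\pi^{*}\bar g)=2\pi l$ for $l\le p$, which gives (1) and (2) (the corresponding identity on $\RP^2$ is obtained below).

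Next I would set up the covering-space correspondence for sweepouts. The map $\pi^{-1}_{*}\colon\mathcal{Z}_1(\RP^2;\Z_2)\to\mathcal{Z}_1(S^2;\Z_2)$ is continuous, doubles mass, and is a homotopy equivalence --- both spaces are $K(\Z_2,1)$ and $\pi^{-1}_{*}$ is an isomorphism on $\pi_1\cong H_2(\,\cdot\,;\Z_2)$ --- so it carries the generator $\bar\lambda$ to the generator $\lambda$; moreover it restricts to a length-doubling homeomorphism of $\mathcal{Z}_1(\RP^2;\Z_2)$ onto the space of $a$-invariant $1$-cycles in $S^2$. Hence $p$-sweepouts of $(\RP^2,\bar g)$ correspond bijectively, with exactly doubled maximal length, to $\Z_2$-equivariant $p$-sweepouts of $(S^2,g)$, so that $\omega_l(\RP^2,\bar g)=\tfrac12\,\omega_l^{\Z_2}(S^2,g)$, with $\omega_l^{\Z_2}$ the equivariant $l$-width; similarly $\bar V=\sum_j m_j|\bar\sigma_j|\mapsto\pi^{-1}_{*}\bar V=\sum_j m_j|\pi^{-1}(\bar\sigma_j)|$ matches achievers of $\omega_l(\RP^2,\bar g)$ with $a$-invariant achievers of $\omega_l^{\Z_2}(S^2,g)$, again doubling mass.

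Then comes the technical heart: I would rerun the min-max analysis underlying \Cref{HigherMultCounter} --- the regularity of \Cref{CMGeodesicThm}, which already applies to $\RP^2$, together with the rigidity mechanism that in the surface setting replaces the reverse catenoid estimate of \cite{WangZhou_mult2} --- subject to the $\Z_2$-equivariance constraint on $S^2$, or equivalently directly on $\RP^2$. Because $\sigma$ is $a$-invariant and its role as the unique obstruction forcing high-multiplicity min-max is itself equivariant, the argument should carry over to equivariant sweepouts; the only change is in the mass count --- an $a$-invariant cycle near a multiple of $\sigma$ has twice the length of the corresponding cycle near $\bar\sigma$ in $\RP^2$ --- so the level-$l$ equivariant min-max value is $\omega_l^{\Z_2}(S^2,g)=4\pi l$, attained uniquely by $2l\,|\sigma|$. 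Pushing this down via the correspondence above yields $\omega_l(\RP^2,\bar g)=2\pi l$ with unique achiever $2l\,|\bar\sigma|$, and nondegeneracy of $\bar\sigma$ follows from that of $\sigma$ since Jacobi fields along $\bar\sigma$ lift to Jacobi fields along $\sigma$. This proves (3).

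The hard part is exactly this last step: verifying that the instability and rigidity estimates of \Cref{HigherMultCounter} near the degenerate configuration survive the equivariance restriction and deliver the doubled multiplicity $2l$ rather than $l$. The remaining ingredients --- equivariance of the construction, the descent, positivity of curvature, and the covering-space dictionary between sweepouts and between achievers --- are routine once \Cref{HigherMultCounter} is in hand, and \cite[Cor.~1.2]{WangZhou_mult2} provides a direct template.
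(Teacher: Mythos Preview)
Your equivariant-lift approach is a detour, and the step you flag as ``the hard part'' is not merely hard but circular as written. You assert $\omega_l^{\Z_2}(S^2,g)=4\pi l$ by saying the $S^2$ argument ``carries over'' with only a mass-doubling adjustment. But nothing in the rigidity argument for $(S^2,g)$ doubles the width: the ordinary argument gives $\omega_l(S^2,g)=2\pi l$, and the varifold $l|\sigma|$ is already $a$-invariant, so the equivariance constraint alone does not force the achiever to be $2l|\sigma|$. To actually compute the equivariant width you would need to know the widths of the quotient---either $\omega_l(\RP^2,\bar g)$ itself, or $\omega_l$ of the quotient cylinder $(S^1\times\R)/\{(\theta,z)\sim(-\theta,-z)\}$---which is exactly the statement you are trying to prove. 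The covering-space dictionary $\omega_l(\RP^2,\bar g)=\tfrac12\omega_l^{\Z_2}(S^2,g)$ is fine, but it transports the problem rather than solving it.

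The paper's proof avoids this by working directly on the quotient $N_k=M_k/\{\pm\}$ from the outset, never invoking any equivariant min-max on $S^2$. Every ingredient of the $S^2$ argument applies verbatim on $N_k$: \Cref{CMGeodesicThm} holds on any closed surface, the slices $\{|z|=c\}$ still form a mean-convex foliation away from the one-sided equator $\bar\gamma_0$, the Frankel-type argument forces every closed geodesic to meet $\bar\gamma_0$, and the Jacobi-field contradiction of \Cref{lem:Jacobi_fields} works unchanged. The width value is obtained from the convergence $\omega_l(N_k)\to\omega_l\big((S^1\times\R)/\!\sim\big)=2\pi l$ as $k\to\infty$ (via Song's or Wang--Zhou's cylinder computation applied to the half-cylinder $S^1\times(0,\infty)$ sitting inside the quotient). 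The multiplicity $2l$ then falls out immediately from $\mathrm{length}(\bar\gamma_0)=\pi$, with no separate doubling mechanism needed. Your parenthetical ``or equivalently directly on $\RP^2$'' is the correct route; the equivariant framework on $S^2$ adds machinery without bypassing the essential quotient computation.
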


\subsection{Main ideas}
We sketch the main ideas of our results.

The main new ingredient in \Cref{GenSelfIntThm} is an iterated conformal deformation process. Suppose $\gamma$ is a geodesic in the metric $g$, and let $\tilde{\gamma}$ be a small normal graph over $\gamma$ such that $\gamma$ agrees with $\tilde{\gamma}$ outside a small geodesic ball $B_r(x)$. Then we find a small conformal deformation $\tilde{g}$ of $g$ supported in $B_r(x)$ so that $\tilde{\gamma}$ is a geodesic in the metric $\tilde{g}$. Iterations of this construction allow us to manually decrease the orders of the intersections of geodesics.


The index upper bound in dimensions $3 \leq n+1 \leq 7$ due to \cite[Theorem A]{gaspar2020index} requires embeddedness in an important way (in the same way that embeddedness is required for the index upper bound in the Almgren--Pitts setting due to \cite{MN_index_upper}): normal vector fields along an embedded hypersurfaces can be extended to vector fields on the ambient manifold. To prove the index upper bound of \Cref{thm:index_upper}, we make two observations. 
\begin{enumerate}
    \item An approximation argument using \Cref{GenSelfIntThm} allows us to work with unions of closed immersed geodesics that only have order 2 vertices.
    \item By a simple observation in linear algebra, up to the addition of a tangential vector field, a normal vector field along a union of closed immersed geodesics with order 2 vertices can be extended to an ambient vector field.
\end{enumerate}
Since the second variation of length only depends on the normal component of a variation, the argument of \cite[Theorem A]{gaspar2020index} can be extended to our setting. We emphasize that we have to take special care in constructing the extended ambient vector field to ensure that the error term in \cite[Proposition 3.3]{gaspar2020index} is small. In the embedded case, this error term vanishes automatically. While we cannot guarantee that it vanishes in our setting, we can find extensions that make the error term arbitrarily small.

By a result of \cite{tonegawa2005stable}, the number of vertices of any geodesic network produced by min-max for the $p$-width is at most $p$. By an approximation argument using \Cref{GenSelfIntThm}, we can upgrade this vertex bound obtained in the generic setting to the general bound in \Cref{thm:index_upper}. 

For \Cref{HigherMultCounter} and \Cref{rptwoCorr}, we adapt an example of \cite{WangZhou_mult2} to the surface setting. While Wang--Zhou require a delicate ``reverse-catenoid" estimate to show that $\omega_2 = 2 \omega_1$, this is not necessary on surfaces, and higher multiplicity follows from the Frankel property of our surface and compactness of geodesics of bounded length. 

\subsection{Paper organization}
This paper is organized as follows:
\begin{itemize}
    \item In \S\ref{sec:background}, we review the required background results about min-max. 
    
    \item In \S \ref{prelims}, we establish notation and terminology, as well as some simple lemmas and constructions with geodesics.

    \item In \S \ref{GenericInterSec}, we prove \Cref{GenSelfIntThm}

    \item In \S \ref{IndexSec}, we prove \Cref{thm:index_upper} 

    \item In \S \ref{HighMultSec}, we prove \Cref{HigherMultCounter} and \Cref{rptwoCorr} and provide further examples.
\end{itemize}

\subsection{Acknowledgements}
The authors are thankful to Otis Chodosh for suggesting this project, as well as Christos Mantoulidis, Fernando Marques, and Akashdeep Dey for insightful conversations. 
%
%
%
%
\section{Background}\label{sec:background}
In this section we review the terminology and notation of min-max theory, and give a rigorous definition of the $p$-widths (both in the Almgren--Pitts and Allen--Cahn settings), which play a central role in this paper, as mentioned in the Introduction. We follow the presentation in \cite[\S2]{chodosh2023p}, to which we refer the reader for more details. In this section, we will make use of some standard notation from geometric measure theory, which we briefly recap below. We refer to \cite{marques2014min} and \cite{Simon1983} for the relevant definitions.

For the rest of this section, $(M^2, g)$ will denote a fixed closed Riemannian surface.

\subsection{Notation}
\begin{itemize}
    \item $\mathbf{I}_k(M;\Z_2)$: the space of $k$-dimensional mod 2 flat chains in $M$, equipped with the topology induced by the \textit{flat metric} $\mathcal{F}$;
    \item $\mathcal{Z}_1(M;\Z_2)$: the space of 1-dimensional flat cycles, i.e. 1-dimensional flat chains $T\in\mathbf{I}_1(M;\Z_2)$ such that $T=\partial\Omega$ for some $\Omega\in\mathbf{I}_2(M;\Z_2)$; 
    \item $\mathbf{M}$: the mass functional on $\mathbf{I}_k(M;\Z_2)$; if $\gamma$ denotes the 1-cycle induced by the submanifold $\gamma(S^1)$ for an immersed closed curve $\gamma:S^1\to M$, then $\mathbf{M}(\gamma)=\mathrm{length}(\gamma)$;
    \item $G_1(M)=\{(x,P): x\in M, P\in G(T_xM, 1)\}$, where $G(T_xM, 1)$ denotes the space of unoriented 1-dimensional subspaces in $T_xM$;
    \item $\mathcal{V}_1(M)$: the space of 1-varifolds on $M$ (i.e. of Radon measures on $G_1(M)$), equipped with the topology induced by the \textit{varifold metric} $\mathbb{F}$;
    \item $\mathcal{IV}_1(M)$: the space of integral 1-varifolds on $M$,
    \item $\|V\|$: the Radon measure induced on $M$ by $V\in\mathcal{V}_1(M)$,
    \item $|T|, |\gamma|$: the integral 1-varifold induced by a mod 2 flat chain $T\in\mathbf{I}_1(M;\Z_2)$, or by the submanifold $\gamma(S^1)$ for an immersed closed curve $\gamma:S^1\to M$;
\end{itemize}
As customary in min-max theory, we shall also use $\mathbf{F}$ to denote the so-called $\mathbf{F}$-metric on $\mathbf{I}_1(M;\Z_2)$ defined by
\begin{equation*}
    \mathbf{F}(S,T)\coloneqq\mathcal{F}(S,T)+\mathbf{F}(|S|,|T|)
\end{equation*}
for $S,T\in\mathbf{I}_1(M;\Z_2)$.

It is worth noticing at this point that by \cite{marques2017existence}, $\mathbf{I}_2(M;\Z_2)$ is contractible, and the boundary map 
\[\partial: \mathbf{I}_2(M;\Z_2)\to\mathcal{Z}_1(M;\Z_2)\]
is a double cover.

\subsection{Almgren--Pitts min-max theory and the volume spectrum} Since minimal hypersurfaces (or, in our case, closed geodesics) are critical points of the area functional (in our case, length), it is natural to mimic the ideas of Morse theory in order to construct these critical points.

The starting point of Almgren--Pitts min-max theory is Almgren's isomorphism theorem \cite{almgren1962homotopy}, which shows that $\mathcal{Z}_1(M;\Z_2)$ equipped with the flat topology is weakly homotopy equivalent to $\RP^\infty$, so that 
\[H^p(\mathcal{Z}_1(M;\Z_2);\Z_2)=\{0,\bar{\lambda}^p\}\cong\Z_2\]
for all $p\geq 1$, where $\bar{\lambda}$ is the generator of $H^1(\mathcal{Z}_1(M;\Z_2);\Z_2)$.
\begin{definition}
An $\mathbf{F}$-continuous map $\Phi:X\to\mathcal{Z}_1(M;\Z_2)$ from a finite dimensional cubical complex $X$ into the space $\mathcal{Z}_1(M;\Z_2)$ of 1-cycles in $M$ is said to be a $p$\textit{-sweepout} if $\Phi^*(\bar{\lambda}^p)\neq 0$ in $H^p(X;\Z_2)$.
\end{definition}

\begin{definition}
Let $\mathcal{P}_p(M)$ denote the set of all $p$-sweepouts, then we can define the (Almgren--Pitts) $p$\textit{-width} $\omega_p(M,g)$ by
    \begin{equation*}
        \omega_p(M,g)\coloneqq \inf_{\Phi\in\mathcal{P}_p(M)}\sup_{x\in\mathrm{dmn}(\Phi)}\mathbf{M}(\Phi(x)).
    \end{equation*}
    The volume spectrum of $(M,g)$ is the sequence $\{\omega_p(M,g)\}_{p\in \N}$\footnote{By \cite[\S1.5]{MN_index_upper}, the value of $\omega_p(M, g)$ is unchanged if we only consider $p$-sweepouts whose domain is a cubical complex of dimension $p$.}.
\end{definition}

We shall use the following lemma throughout the paper.
\begin{lemma}[{\cite[Lemma 2.1]{irie2018density}}]
The $p$-width $\omega_p(M,g)$ depends continuously on the metric $g$ with respect to the $C^0$-topology.
\end{lemma}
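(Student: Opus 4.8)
The plan is to exploit two elementary facts: the notion of a $p$-sweepout is entirely metric-independent, and the $1$-dimensional mass functionals of $C^0$-close metrics are uniformly comparable. Concretely, suppose $g_1,g_2$ are metrics on $M$ with $(1-\eps)g_1 \le g_2 \le (1+\eps)g_1$ as quadratic forms for some $\eps\in(0,1)$. Since the length element of a rectifiable $1$-chain scales by at most $(1\pm\eps)^{1/2}$, one gets
\begin{equation*}
(1-\eps)^{1/2}\,\mathbf{M}_{g_1}(T) \le \mathbf{M}_{g_2}(T) \le (1+\eps)^{1/2}\,\mathbf{M}_{g_1}(T) \qquad\text{for all } T\in\mathbf{I}_1(M;\Z_2).
\end{equation*}
In particular the flat metrics $\mathcal{F}_{g_1},\mathcal{F}_{g_2}$, and hence the $\mathbf{F}$-metrics, are bi-Lipschitz equivalent, so they induce the same topology on $\mathbf{I}_1(M;\Z_2)$ and on $\mathcal{Z}_1(M;\Z_2)$ (the underlying sets of chains and cycles do not depend on $g$ at all). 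It follows that a map $\Phi\colon X\to\mathcal{Z}_1(M;\Z_2)$ is $\mathbf{F}$-continuous for $g_1$ if and only if it is $\mathbf{F}$-continuous for $g_2$, and the cohomological requirement $\Phi^*(\bar{\lambda}^p)\neq 0$ in $H^p(X;\Z_2)$ makes no reference to the metric; hence $\mathcal{P}_p(M)$ is literally the same class of admissible sweepouts for $g_1$ and for $g_2$.

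Given this, I would simply take infima over the common class. For any $\Phi\in\mathcal{P}_p(M)$ and any $x\in\mathrm{dmn}(\Phi)$, the mass comparison yields $\mathbf{M}_{g_2}(\Phi(x)) \le (1+\eps)^{1/2}\mathbf{M}_{g_1}(\Phi(x))$; taking $\sup_x$ and then $\inf_{\Phi}$ gives $\omega_p(M,g_2)\le(1+\eps)^{1/2}\omega_p(M,g_1)$, and the symmetric argument gives the reverse bound, so
\begin{equation*}
(1-\eps)^{1/2}\,\omega_p(M,g_1) \le \omega_p(M,g_2) \le (1+\eps)^{1/2}\,\omega_p(M,g_1).
\end{equation*}
To conclude $C^0$-continuity at a fixed metric $g$: by compactness of $M$, any metric $g'$ sufficiently $C^0$-close to $g$ satisfies $(1-\delta)g\le g'\le(1+\delta)g$ for a prescribed $\delta>0$, so the displayed two-sided estimate forces $|\omega_p(M,g')-\omega_p(M,g)| \le \big(1-(1-\delta)^{1/2}\big)\omega_p(M,g) + \big((1+\delta)^{1/2}-1\big)\omega_p(M,g)$, which tends to $0$ as $\delta\to 0$. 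Since $g$ was arbitrary, $\omega_p(\cdot)$ is continuous in the $C^0$-topology.

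There is no real obstacle here; the only point meriting a line of care is the claim that $\mathbf{F}$-continuity (and thus membership in $\mathcal{P}_p(M)$) is unaffected by passing to a $C^0$-comparable metric, which is immediate once one notes that comparable metrics give bi-Lipschitz-equivalent $\mathbf{F}$-metrics and hence the same topology on the cycle space. (If one prefers to argue with the Allen--Cahn/phase-transition $p$-widths, the identical scheme applies, using that $C^0$-close metrics induce uniformly comparable Allen--Cahn energies.)
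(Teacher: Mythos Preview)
Your argument is correct and is precisely the standard proof (indeed, it is essentially the argument in \cite[Lemma~2.1]{irie2018density}). The paper itself does not give a proof of this lemma; it merely quotes the result with a citation, so there is nothing further to compare.
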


More generally, given a $p$-sweepout $\Phi:X\to\mathcal{Z}_1(M;\Z_2)$, one can consider its homotopy class in the following sense.

\begin{definition}
Let $\Phi:X\to\mathcal{Z}_1(M;\Z_2)$ be a $p$-sweepout. We define the (Almgren--Pitts) \textit{homotopy class} $\Pi$ of $\Phi$ to be the set
\begin{equation*}
    \Pi\coloneqq\{\mathbf{F}\text{-continuous } \Phi':X\to\mathcal{Z}_1(M;\Z_2): \Phi'\text{ is homotopic to }\Phi\text{ in the }\mathcal{F}\text{-topology}\}.
\end{equation*}
The \textit{Almgren--Pitts width} $\mathbf{L}_{\mathrm{AP}}(\Pi)$ of the homotopy class $\Pi$ is defined to be
\[\mathbf{L}_{\mathrm{AP}}(\Pi)\coloneqq \inf_{\Phi\in\Pi}\sup_{x\in X}\mathbf{M}(\Phi(x)).\]
\end{definition}

\begin{definition}
Given a homotopy class $\Pi$, we say
\begin{itemize}
    \item a sequence $\{\Phi_i:X\to\mathcal{Z}_1(M;\Z_2)\}_i\subset\Pi$ is \textit{minimizing} if 
    \begin{equation*}
        \limsup_{i\to\infty}\sup_{x\in X}\mathbf{M}(\Phi_i(x))=\mathbf{L}_\mathrm{AP}(\Pi);
    \end{equation*}
    \item a varifold $V\in\mathcal{V}_1(M)$ is in the (Almgren--Pitts) \textit{critical set} $\mathbf{C}_{\mathrm{AP}}(\{\Phi_i\})$ of a minimizing sequence $\{\Phi_i\}\subset \Pi$ if
    \begin{itemize}
        \item $\|V\|(M)=\mathbf{L}_{\mathrm{AP}}(\Pi)$,
        \item and there exists a subsequence $\{i_j\}_j$ and $\{x_j\in X\}_j$ such that
        \begin{equation*}
            |\Phi_{i_j}(x_j)|\to V
        \end{equation*}
        in the sense of varifolds.
    \end{itemize}
\end{itemize}
\end{definition}

We can summarize some of the key results of Almgren--Pitts min-max theory in ambient dimension 2 in the following statement, due to the combined work of Almgren \cite{almgren1962homotopy}, Pitts \cite{pitts1981existence, pitts1973net}, Marques--Neves \cite{marques2014min}, and Aiex \cite{aiex2016width}.
\begin{theorem}
Suppose $\mathbf{L}_{\mathrm{AP}}(\Pi)>0$. Then there exists a nontrivial stationary integral 1-varifold $V\in\mathbf{C}_\mathrm{AP}(\{\Phi^*_i\}_i)$ and finitely many points $\{p_1, \dots, p_N\}\subset M$ such that
\begin{itemize}
    \item $\|V\|(M)=\mathbf{L}_{\mathrm{AP}}(\Pi)$,
    \item $V$ has integer density everywhere,
    \item away from $\{p_i\}_{i=1}^N$, the support of $\|V\|$ is contained in a finite disjoint union of embedded geodesics,
    \item at each $p_i$, any tangent cone is a stationary geodesic network in $\R^2$, smooth away from 0.
\end{itemize}
\end{theorem}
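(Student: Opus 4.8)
The plan is to carry out the Almgren--Pitts min-max construction for $1$-cycles in $(M^2,g)$ and then invoke the regularity theory for stationary, almost minimizing varifolds. Let $\Phi$ be a $p$-sweepout with $\Pi$ its homotopy class; since every $\Phi'\in\Pi$ is $\mathcal{F}$-homotopic to $\Phi$ it still satisfies $(\Phi')^*(\bar\lambda^p)\neq 0$, so all competitors below remain $p$-sweepouts. Start from an arbitrary minimizing sequence $\{\Phi_i\}\subset\Pi$, so that $\sup_x\mathbf{M}(\Phi_i(x))\to\mathbf{L}_{\mathrm{AP}}(\Pi)$ and in particular the masses are uniformly bounded. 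First I would apply Pitts's pull-tight procedure in the $\mathbf{F}$-metric formulation of \cite{marques2014min}: one builds a deformation of $\mathcal{Z}_1(M;\Z_2)$ that strictly decreases mass near any non-stationary varifold and keeps the homotopy class unchanged, producing a new minimizing sequence $\{\Phi_i^*\}\subset\Pi$ with $\mathbf{M}(\Phi_i^*(x))\le\mathbf{M}(\Phi_i(x))$ and with every element of $\mathbf{C}_{\mathrm{AP}}(\{\Phi_i^*\})$ a stationary integral $1$-varifold of mass $\mathbf{L}_{\mathrm{AP}}(\Pi)$; compactness of varifolds of bounded mass makes $\mathbf{C}_{\mathrm{AP}}(\{\Phi_i^*\})$ nonempty.

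Next I would extract from $\mathbf{C}_{\mathrm{AP}}(\{\Phi_i^*\})$ a varifold $V$ that is almost minimizing in sufficiently small annuli at every point of $M$, in the sense of Pitts with respect to the $\mathcal{F}$- and $\mathbf{M}$-metrics. This is the core of the Almgren--Pitts argument: if no such $V$ existed, then using the interpolation and discretization theorems of \cite{marques2014min} together with local competitor replacements one could homotope the $\Phi_i^*$ to sweepouts of mass strictly below $\mathbf{L}_{\mathrm{AP}}(\Pi)$ while preserving $(\Phi_i^*)^*(\bar\lambda^p)\neq 0$, contradicting the definition of the width. The interpolation estimates needed to run this in ambient dimension $2$ are supplied by \cite{aiex2016width}.

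It remains to prove the regularity of such a $V$. As $V$ is stationary it has locally bounded first variation and obeys the monotonicity formula, so $\Theta(V,\cdot)$ is defined and upper semicontinuous on $M$ and every tangent cone at every point is a stationary integral cone in $\R^2$. A stationary integral $1$-cone is dilation invariant, hence supported on a finite union of rays from the origin, and vanishing of its first variation is exactly the balancing identity $\sum_k m_k v_k=0$ for the unit directions $v_k$ with multiplicities $m_k\in\N$; thus it is a stationary geodesic network, smooth away from $0$. At a point whose tangent cones are all (possibly multiple) single lines, the almost minimizing property rules out cancellation and feeds into the regularity theory for almost minimizing varifolds (Allard's theorem and the sheeting theorem, applied in the plane) to give that $\operatorname{supp}\|V\|$ is a finite union of $C^{1,\alpha}$ embedded arcs through the point, all tangent to one line; stationarity makes them geodesics and unique continuation forces them to coincide, so near such a point $\operatorname{supp}\|V\|$ is a single embedded geodesic and $\Theta(V,\cdot)$ is a positive integer there by the constancy theorem. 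At each of the remaining points $p$, the balancing identity forces any tangent cone to have at least three rays --- hence $\Theta(V,p)\ge 3/2$ --- and only finitely many rays, since its mass is bounded; applying the previous paragraph near but away from $p$ shows $\operatorname{supp}\|V\|\setminus\{p\}$ is a finite union of embedded geodesic arcs ending at $p$, so $p$ is isolated in $\operatorname{supp}\|V\|$, and by compactness there are finitely many such points $p_1,\dots,p_N$. Away from $\{p_1,\dots,p_N\}$ the local embedded geodesics assemble, via the interior curvature estimate and smooth compactness of geodesics of bounded length, into a finite disjoint union of embedded geodesics containing $\operatorname{supp}\|V\|$.

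The variational steps --- pull-tight and the production of an almost minimizing $V$ --- are by now routine; I expect the regularity analysis to be the real obstacle. In contrast with the higher-dimensional Almgren--Pitts/Schoen--Simon theory, network singularities genuinely cannot be excluded (see \cite{pitts1973net}), so the substantive work is (i) showing the singular set is finite, (ii) identifying the tangent cones as the balanced networks above and verifying they are smooth away from $0$, and (iii) pinning down the density --- integrality on the regular part is immediate from the constancy theorem, while integrality at the vertices is part of the analysis in \cite{aiex2016width}. Given $V$ from the almost minimizing step, the remaining assertions, namely $V\in\mathbf{C}_{\mathrm{AP}}(\{\Phi_i^*\})$ and $\|V\|(M)=\mathbf{L}_{\mathrm{AP}}(\Pi)$, are immediate.
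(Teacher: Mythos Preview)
The paper does not prove this theorem: it is stated as background, attributed to the combined work of Almgren, Pitts, Marques--Neves, and Aiex, and no argument is given in the text. Your sketch is precisely the standard Almgren--Pitts route those references take---pull-tight to make the critical set stationary, a combinatorial argument to extract a varifold that is almost minimizing in small annuli, and then the replacement/regularity analysis to get the structure theorem---so there is nothing to contrast it against. Your outline is faithful to that literature, including the correct identification that the integer density at the junction points is the contribution of \cite{aiex2016width} (and Zhou--Zhu), and that genuine network singularities cannot be excluded in ambient dimension $2$.
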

In particular, in ambient dimension $n+1=2$, the Almgren--Pitts min-max theory for the length functional produces, in general, a geodesic network which need not be supported in a union of closed immersed geodesics (see Figure \ref{fig:networkCurve}). 
\begin{figure}[ht!]
    \centering
    \includegraphics[scale=0.3]{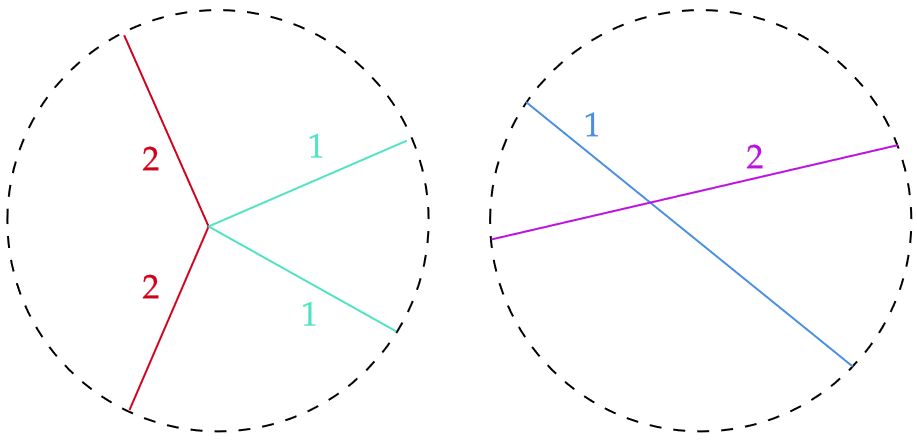}
    \caption{\textbf{L}: A geodesic network on $D^2$, which a priori, may be produced by Almgren--Pitts min-max. \textbf{R}: a union of geodesics, which can be guaranteed with Allen--Cahn min-max by \Cref{CMGeodesicThm}. }
    \label{fig:networkCurve}
\end{figure}
%
We note that not all stationary geodesic networks can arise from min-max, as Aiex \cite{aiex2016width} and Zhou--Zhu \cite{zhou2020geodesics} show that the density of the min-max 1-varifold is always an integer at junction points, using the so-called \textit{almost-minimising in annuli} property satisfied by min-max solutions. This rules out e.g. triple junctions where all segments are multiplicity one.
\subsection{Allen--Cahn min-max theory and the phase-transition spectrum}
Let $W$ be a smooth symmetric double-well potential, i.e. a smooth even function $W:\R\to [0,+\infty)$ with exactly three critical points at $-1$, 0, and 1 such that
\begin{itemize}
    \item 0 is a local maximum,
    \item $W(\pm 1)=0$,
    \item $W''(\pm 1)>0$.
\end{itemize}
The corresponding $\varepsilon$-\textit{Allen--Cahn energy} (where $\varepsilon>0$ is a small parameter) of a function $u\in L^\infty(M)\cap H^1(M)$ is given by
\begin{equation*}
    E_\varepsilon[u]\coloneqq\int_M\frac{\varepsilon}{2}|\nabla u|^2+\frac{W(u)}{\varepsilon}.
\end{equation*}
A smooth function $u$ is a critical point of $E_\varepsilon$ if it solves the $\varepsilon$\textit{-Allen--Cahn equation}
\begin{equation}\label{eq:AC_eqn_W}
    \varepsilon^2\Delta u=W'(u).
\end{equation}

We shall now discuss the min-max theory associated to critical points of the $\varepsilon$-Allen--Cahn energy, as formulated in \cite{gaspar2018allen,Dey}. Let $X$ be some finite dimensional cubical complex, and let $\pi:\tilde{X}\to X$ be a double cover. 
We shall associate to $\pi$ an (Almgren--Pitts) homotopy class $\Pi$ of $\mathbf{F}$-continuous maps $\Phi:X\to\mathcal{Z}_1(M;\Z_2)$ as follows.
\begin{definition}
Given some finite dimensional cubical complex $X$ and a double cover $\pi:\tilde{X}\to X$, an $\mathbf{F}$-continuous map $\Phi:X\to\mathcal{Z}_1(M;\Z_2)$ is in $\Pi$ if and only if
\begin{equation*}
    \mathrm{ker}(\Phi_*:\pi_1(X)\to\pi_1(\mathcal{Z}_1(M;\Z_2)))=\mathrm{im}(\pi_*).
\end{equation*}
Namely, $\Pi$ is the $\mathcal{F}$-homotopy class of $\mathbf{F}$-continuous maps corresponding to the double cover $\pi:\tilde{X}\to X$.
\end{definition}

Similarly, we can define the following collection $\tilde{\Pi}$ of continuous maps $\tilde{X}\to H^1(M)\setminus\{0\}$ corresponding to the double cover $\pi$, and its $\varepsilon$\textit{-phase transition width} $\mathbf{L}_\varepsilon(\tilde{\Pi})$ as follows. 
\begin{definition}\label{def:AC_class}
Given some finite dimensional cubical complex $X$ and a double cover $\pi:\tilde{X}\to X$, let $\tau:\tilde{X}\to\tilde{X}$ be the unique nontrivial deck transformation corresponding to $\pi$, and let $\tilde{\Pi}$ be the collection of all continuous $\Z_2$-equivariant maps $h:\tilde{X}\to H^1(M)\setminus\{0\}$, in the sense that
\[h(\tau(x))=-h(x)\]
for all $x\in\tilde{X}$. We shall write $\tilde{\Pi}=\Gamma(\tilde{X})$ to highlight the dependence on the double cover $\tilde{X}\to X$, and refer to such a class as a \textit{phase transition} class\footnote{Notice this purely topological notion is independent of $\varepsilon>0$.}.
We define the $\varepsilon$\textit{-phase transition width} corresponding to $\tilde{\Pi}$ by
\begin{equation*}
    \mathbf{L}_\varepsilon(\tilde{\Pi})\coloneqq \inf_{h\in\tilde{\Pi}}\sup_{x\in\tilde{X}}E_\varepsilon[h(x)].
\end{equation*}
\end{definition}
We have the following existence result.
\begin{proposition}[{\cite[Theorem 3.3]{gaspar2018allen}}]\label{prop:AC_minmax_existence}
Let $\tilde{\Pi}$ be as in Definition \ref{def:AC_class}. If 
\[\mathbf{L}_\varepsilon(\tilde{\Pi})<E_\varepsilon[0]=\frac{W(0)}{\varepsilon}\mathrm{Vol}(M,g),\] then there is a min-max critical point $u_\varepsilon\in H^1(M)\setminus\{0\}$ of $E_\varepsilon$ corresponding to $\tilde{\Pi}$. Moreover, $u_\varepsilon\in C^\infty(M)$, $|u_\varepsilon|\leq 1$, and it solves \eqref{eq:AC_eqn_W}. Furthermore, the Morse index of $u_\varepsilon$ as a critical point of $E_\varepsilon$ is bounded above by $\mathrm{dim}(X)$.
\end{proposition}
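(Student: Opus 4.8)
The plan is to produce $u_\varepsilon$ by equivariant (Lusternik--Schnirelmann) min-max applied to the even functional $E_\varepsilon$ on $H^1(M)\setminus\{0\}$, and then to upgrade the resulting abstract critical point to a smooth solution with the stated pointwise and index bounds. As a preliminary step one may replace $W$ by a modified even potential $\tilde W$ which agrees with $W$ on $[-2,2]$ and grows quadratically at infinity; this makes $E_\varepsilon$ a $C^2$, even, bounded-below functional on $H^1(M)$, and the compact Sobolev embedding $H^1(M)\hookrightarrow L^q(M)$ on the surface $M$ yields the Palais--Smale condition on $H^1(M)\setminus\{0\}$ uniformly on energy levels bounded away from $E_\varepsilon[0]=\tfrac{W(0)}{\varepsilon}\mathrm{Vol}(M,g)$. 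The hypothesis $\mathbf{L}_\varepsilon(\tilde\Pi)<E_\varepsilon[0]$ is precisely what places the min-max level strictly below the isolated level of the trivial critical point $0$.

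Next comes the min-max step. The class $\tilde\Pi=\Gamma(\tilde X)$ from \Cref{def:AC_class} is nonempty and is preserved by the flow of a locally Lipschitz $\Z_2$-equivariant pseudo-gradient vector field for $E_\varepsilon$ (equivariance being available because $E_\varepsilon$ is even). By the deformation lemma --- valid thanks to the Palais--Smale condition below the level $E_\varepsilon[0]$ --- the value $\mathbf{L}_\varepsilon(\tilde\Pi)$ is a critical value of $E_\varepsilon$, attained at some $u_\varepsilon\in H^1(M)\setminus\{0\}$. Since $E_\varepsilon[u_\varepsilon]=\mathbf{L}_\varepsilon(\tilde\Pi)<E_\varepsilon[0]$ we must have $u_\varepsilon\neq 0$, so $u_\varepsilon$ is a nontrivial weak solution of the modified Allen--Cahn equation.

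For regularity and the pointwise bound, test the equation against $(u_\varepsilon-1)_+\in H^1(M)$ to get
\[
\varepsilon^2\int_M |\nabla (u_\varepsilon-1)_+|^2 \;=\; -\int_M W'(u_\varepsilon)\,(u_\varepsilon-1)_+ \;\le\; 0,
\]
using that $W'\ge 0$ on $[1,\infty)$ (since $1$ is a strict local minimum of $W$ and, by hypothesis, the only critical point of $W$ in $[1,\infty)$). Hence $(u_\varepsilon-1)_+$ is constant; were it a positive constant, $u_\varepsilon$ would be a constant $\ge 1$ with $W'(u_\varepsilon)=0$, forcing $u_\varepsilon\equiv 1$, a contradiction. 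Therefore $u_\varepsilon\le 1$, and symmetrically $u_\varepsilon\ge -1$, so in particular $u_\varepsilon$ solves \eqref{eq:AC_eqn_W} for the original $W$. Now $W'(u_\varepsilon)\in L^\infty(M)$, so $L^p$ elliptic estimates give $u_\varepsilon\in W^{2,q}(M)$ for every $q<\infty$, hence $u_\varepsilon\in C^{1,\alpha}(M)$; Schauder bootstrapping then yields $u_\varepsilon\in C^\infty(M)$.

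Finally, the bound $\mathrm{index}(u_\varepsilon)\le\dim(X)$ follows from the quantitative refinement of the deformation argument that tracks the dimension of instability (in the style of Ghoussoub's min-max principles). If every critical point at level $\mathbf{L}_\varepsilon(\tilde\Pi)$ had Morse index at least $\dim(X)+1$, the finite-dimensional negative eigenspaces of $D^2E_\varepsilon$ at those points could be used to build a $\Z_2$-equivariant deformation pushing any near-optimal element of $\tilde\Pi$ strictly below $\mathbf{L}_\varepsilon(\tilde\Pi)$ while preserving membership in $\tilde\Pi$, contradicting the definition of the width; hence some min-max critical point, taken to be $u_\varepsilon$, has index at most $\dim(X)$. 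The main obstacles are the two points requiring care: establishing the Palais--Smale condition together with an \emph{equivariant} deformation lemma valid below the trivial level $E_\varepsilon[0]$ (the reason for truncating $W$ and for tracking the deleted origin), and the index estimate, which needs the instability-sensitive version of the min-max scheme rather than the bare existence statement.
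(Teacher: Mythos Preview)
The paper does not supply its own proof of this proposition: it is stated as a background result and attributed directly to \cite[Theorem 3.3]{gaspar2018allen}, with no argument given. So there is no ``paper's proof'' to compare against beyond the citation.

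Your sketch is a faithful outline of the standard argument behind the cited result. The truncation of $W$ to gain Palais--Smale, the $\Z_2$-equivariant min-max on $H^1(M)\setminus\{0\}$ using that $E_\varepsilon$ is even, the testing against $(u_\varepsilon-1)_+$ for the pointwise bound, the elliptic bootstrap for smoothness, and the Ghoussoub-style index-sensitive deformation for the Morse index bound are all the correct ingredients and are essentially what Gaspar--Guaraco carry out. One minor point: in your $(u_\varepsilon-1)_+$ argument, the cleanest way to conclude is to observe that on $\{u_\varepsilon>1\}$ both $W'(u_\varepsilon)>0$ and $(u_\varepsilon-1)_+>0$, so the right-hand integral is nonnegative and hence both sides vanish, forcing $|\{u_\varepsilon>1\}|=0$ directly; your detour through ``$(u_\varepsilon-1)_+$ is a positive constant'' is unnecessary. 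Otherwise the proposal is correct and matches the approach of the cited reference.
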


In order to define the $\varepsilon$-phase transition spectrum, we now just need to find an appropriate sequence of phase transition classes $\tilde{\Pi}$ to which we can apply Proposition \ref{prop:AC_minmax_existence}. This can be done via a construction due to Gaspar--Guaraco \cite{gaspar2018allen}. 
We refer to \cite[Section 1]{Dey} for a description of this construction, which follows a similar notation and approach to one the we have adopted in this section.
The upshot of Gaspar--Guaraco's construction is the existence of a decreasing sequence $\{\mathcal{C}_p\}_{p\in\N}$ of collections of $p$-dimensional cubical complexes $\tilde{X}$, each possessing a free $\Z_2$ action (so that $\tilde{X}$ can be seen as the total space of a double cover $\tilde{X}\to X$), which provides the adequate analogue of the sequence $\{\mathcal{P}_p\}_{p\in\N}$ of Almgren--Pitts $p$-sweepouts in the phase transition setting.

The $\varepsilon$-phase transition spectrum $\{c_{\varepsilon,p}(M,g, W)\}_{p\in\N}$ can then be defined by
\begin{equation*}
    c_{\varepsilon,p}(M,g, W)\coloneqq\inf_{\tilde{X}\in\mathcal{C}_p}\mathbf{L_\varepsilon}(\Gamma(\tilde{X})).
\end{equation*}

Finally, one can define the \textit{phase transition spectrum} $\{c_p(M,g, W)\}_{p\in\N}$ by taking the limit as $\varepsilon\searrow 0$ (this is well defined by \cite{Dey}):
\begin{equation*}
    c_p(M,g,W)\coloneqq h_0^{-1}\lim_{\varepsilon\searrow 0}c_{\varepsilon,p}(M,g,W),
\end{equation*}
where $h_0$ is the squared $L^2$-energy of the heteroclinic solution $\mathbb{H}$ on $\R$, i.e. the unique (up to sign and translations) non-constant finite energy solution of the ODE
\[\mathbb{H}''(t)=W'(\mathbb{H}(t))\]
with $\mathbb{H}(0)=0$.

The key link between the $\varepsilon$-phase transition min-max theory and geodesics on $(M,g)$ lies in an important regularity result due to the combined work of Hutchinson--Tonegawa \cite{hutchinson2000convergence}, and Guaraco \cite{guaraco2018min}.

In order to state it, we need to associate to a solution $u$ of \eqref{eq:AC_eqn_W} a 1-varifold $V_\varepsilon[u]$ by
\begin{equation*}
    V_\varepsilon[u](f)\coloneqq h_0^{-1}\int_{\{|\nabla u|\neq 0\}}\varepsilon|\nabla u|^2 f(x,T_x\{u=u(x)\})
\end{equation*}
for $f\in C^0(G_1(M))$.

\begin{theorem}\label{thm:HT_reg}
Let $\{(u_i,\varepsilon_i)\}_i\subset C^\infty(M)\times (0,+\infty)$, where $\varepsilon_i\to 0$, and each $u_i$ satisfies
\[\varepsilon_i^2\Delta u_i=W'(u_i)\]
on M.
Assume 
\[\limsup_i\sup_M|u_i|\leq c_0<+\infty,\, \limsup_iE_{\varepsilon_i}[u_i]\leq E_0<+\infty.\]
Then, after passing to a subsequence,
\begin{itemize}
    \item $V_{\varepsilon_i}[u_i]\to V^\infty$ for a stationary integral 1-varifold $V^\infty$,
    \item $\|V^\infty\|(M)=h_0^{-1}\lim_i E_{\varepsilon_i}[u_i]$.
\end{itemize}
\end{theorem}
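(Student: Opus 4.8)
The plan is to follow the now-standard strategy of Hutchinson--Tonegawa \cite{hutchinson2000convergence} and Guaraco \cite{guaraco2018min}. Set $\mu_i \coloneqq \left(\tfrac{\eps_i}{2}|\nabla u_i|^2 + \tfrac{W(u_i)}{\eps_i}\right)\vol_g$ and $\xi_i \coloneqq \left(\tfrac{\eps_i}{2}|\nabla u_i|^2 - \tfrac{W(u_i)}{\eps_i}\right)\vol_g$ for the associated energy and discrepancy measures. Since $\mu_i(M) = E_{\eps_i}[u_i] \le E_0 + o(1)$ and $\|V_{\eps_i}[u_i]\|(M) \le h_0^{-1}\mu_i(M)$ is likewise bounded, Banach--Alaoglu lets us pass to a subsequence along which $\mu_i \rightharpoonup \mu$ as Radon measures and $V_{\eps_i}[u_i] \to V^\infty$ as $1$-varifolds, where $\|V^\infty\| \le h_0^{-1}\mu$ by lower semicontinuity of mass and the pointwise bound on the varifold weight.

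First I would establish the monotonicity formula for $\eps$-stationary points: using $\eps_i^2\Delta u_i = W'(u_i)$, one shows that for each $x \in M$ the quantity $r \mapsto e^{\Lambda r}r^{-1}\mu_i(B_r(x)) + (\text{a controlled discrepancy correction})$ is almost monotone for $r \in (\eps_i, r_0)$, with $\Lambda$ and $r_0$ depending only on $(M,g)$. This yields uniform upper density bounds $\mu_i(B_r(x)) \le Cr$, hence $\mu(B_r(x)) \le Cr$, together with a lower density bound on $\mathrm{supp}\,\mu$. The analytic heart of the proof is the \emph{vanishing of the discrepancy}: $\xi_i \rightharpoonup 0$ as Radon measures, equivalently asymptotic equipartition of energy. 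This is obtained by combining the monotonicity formula with an $\eps_i$-blow-up analysis, showing that near a point of positive density the rescaled functions converge strongly to a configuration of stacked one-dimensional heteroclinic profiles $\mathbb{H}$ along parallel lines, which forces $|\xi_i|$ to concentrate on a set of vanishing $\mu$-measure.

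Granting $\xi_i \rightharpoonup 0$, stationarity of $V^\infty$ follows from the fact that the stress-energy tensor $T_{\eps_i} \coloneqq \left(\tfrac{\eps_i}{2}|\nabla u_i|^2 + \tfrac{W(u_i)}{\eps_i}\right)g - \eps_i\, du_i \otimes du_i$ is divergence-free up to curvature terms: testing the equation against $\nabla_X u_i$ for a smooth vector field $X$ on $M$ and integrating by parts gives
\begin{equation*}
h_0 \, \delta V^\infty(X) = \lim_{i\to\infty}\left( \int_M (\mathrm{div}_g X)\, d\xi_i \right) = 0,
\end{equation*}
where the curvature error terms also vanish in the limit since $\mu_i(M)$ is bounded and the relevant coefficients carry factors of $\eps_i$. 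The same input gives the mass identity, since $\eps_i|\nabla u_i|^2$ equals the energy density plus the discrepancy density:
\begin{equation*}
\|V^\infty\|(M) = h_0^{-1}\lim_{i\to\infty} \int_M \eps_i |\nabla u_i|^2\, d\vol_g = h_0^{-1}\lim_{i\to\infty}\big(E_{\eps_i}[u_i] + \xi_i(M)\big) = h_0^{-1}\lim_{i\to\infty} E_{\eps_i}[u_i].
\end{equation*}

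It remains to show $V^\infty$ is integral, which is the content of the Hutchinson--Tonegawa theorem: at $\mu$-a.e.\ point one blows up at scale $\eps_i$, extracts a tangent measure using the monotonicity formula, and uses the strong convergence to stacked heteroclinic profiles to conclude that the density of $h_0^{-1}\mu$ is a positive integer there; together with stationarity and an Allard-type rectifiability argument this upgrades $V^\infty$ to an integral varifold. I expect the discrepancy-vanishing step, and the blow-up classification underlying integrality, to be the main obstacles; the rest is soft compactness plus integration by parts. This is precisely why, in the body of the paper, we invoke the result as a black box from \cite{hutchinson2000convergence, guaraco2018min}.
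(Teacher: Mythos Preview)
Your sketch is correct and follows exactly the Hutchinson--Tonegawa strategy, but note that the paper does not actually give a proof of this theorem: it is stated in the background section as a known result attributed to \cite{hutchinson2000convergence} and \cite{guaraco2018min}, with no argument supplied. Your own final sentence already acknowledges this, so there is nothing to compare against; the outline you wrote is the standard proof from those references.
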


Similarly to the Almgren--Pitts theory, for a nontrivial phase transition class $\tilde{\Pi}$, we can define the (phase transition) \textit{critical set} $\mathbf{C}_{\mathrm{PT}}(\tilde{\Pi})$.

\begin{definition}
Given a nontrivial $\tilde{\Pi}$ as in Definition \ref{def:AC_class}, we define the (phase transition) critical set $\mathbf{C}_{\mathrm{PT}}(\tilde{\Pi})$ to be the set of all stationary integral 1-varifolds such that
\[V=\lim_i V_{\varepsilon_i}[u_i]\]
for some sequence $\{(u_i,\varepsilon_i)\}_i$ as in Theorem \ref{thm:HT_reg}, corresponding to $\tilde{\Pi}$.
\end{definition}

\begin{remark}
By Proposition \ref{prop:AC_minmax_existence} and Theorem \ref{thm:HT_reg}, $\mathbf{C}_{\mathrm{PT}}(\tilde{\Pi})\neq\emptyset$ for a nontrivial phase transition class $\tilde{\Pi}$.
\end{remark}

\subsection{Comparison between the min-max theories}
The following key result links the two min-max theories.
\begin{theorem}[{\cite[Theorem 6.1]{gaspar2018allen}, \cite[Theorem 1.2]{Dey}}]
The $\varepsilon$-phase transition widths and Almgren--Pitts widths are related by
\begin{equation*}
    \mathbf{L}_\mathrm{PT}(\tilde{\Pi)}\coloneqq h_0^{-1}\lim_{\varepsilon\searrow 0}\mathbf{L}_\varepsilon(\tilde{\Pi})=\mathbf{L}_\mathrm{AP}(\Pi),
\end{equation*}
where $\tilde{\Pi}=\Gamma(\tilde{X})$ for the double cover $\pi:\tilde{X}\to X$ corresponding to the Almgren--Pitts homotopy class $\Pi$.
\end{theorem}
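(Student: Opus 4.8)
The plan is to establish the two inequalities $\limsup_{\varepsilon\searrow 0}\mathbf{L}_\varepsilon(\tilde\Pi)\le h_0\,\mathbf{L}_\mathrm{AP}(\Pi)$ and $\liminf_{\varepsilon\searrow 0}\mathbf{L}_\varepsilon(\tilde\Pi)\ge h_0\,\mathbf{L}_\mathrm{AP}(\Pi)$ separately, following \cite{gaspar2018allen,Dey}; together these force the limit to exist and to equal $h_0\,\mathbf{L}_\mathrm{AP}(\Pi)$. The structural observation underlying both directions is that $\partial\colon\mathbf{I}_2(M;\Z_2)\to\mathcal{Z}_1(M;\Z_2)$ and the quotient $H^1(M)\setminus\{0\}\to(H^1(M)\setminus\{0\})/\Z_2$ are both models of the universal principal $\Z_2$-bundle over $\RP^\infty\simeq K(\Z_2,1)$: indeed $\mathbf{I}_2(M;\Z_2)$ is contractible by \cite{marques2017existence} and carries the free involution $\Omega\mapsto\llbracket M\rrbracket-\Omega$, while $H^1(M)\setminus\{0\}$ is contractible and carries the free involution $u\mapsto-u$. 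Consequently the double cover $\pi\colon\tilde X\to X$ classifying $\Pi$ admits a $\Z_2$-equivariant classifying map into each of the two total spaces, unique up to equivariant homotopy, so the problem reduces to transporting equivariant maps between the two models while keeping quantitative control of mass versus energy.

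For the upper bound I would take a minimizing sequence $\Phi_i\in\Pi$ with $\sup_x\mathbf{M}(\Phi_i(x))\to\mathbf{L}_\mathrm{AP}(\Pi)$, lift each $\Phi_i$ to a $\Z_2$-equivariant map $\tilde\Phi_i\colon\tilde X\to\mathbf{I}_2(M;\Z_2)$ (possible because $\partial$ is the universal cover of $\mathcal{Z}_1(M;\Z_2)$ and $\Phi_i^*\bar\lambda$ is exactly the class of $\pi$), and then post-compose with the heteroclinic reconstruction $\Omega\mapsto u^\Omega_\varepsilon\coloneqq\bar{\mathbb{H}}\bigl(\mathrm{sd}_\Omega/\varepsilon\bigr)$, where $\mathrm{sd}_\Omega$ is the signed distance to $\partial\Omega$ (positive inside $\Omega$) and $\bar{\mathbb{H}}$ is an appropriate truncation of the heteroclinic $\mathbb{H}$. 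Since $\mathbb{H}$ is odd and $\mathrm{sd}_{\llbracket M\rrbracket-\Omega}=-\mathrm{sd}_\Omega$, this map is $\Z_2$-equivariant, hence yields an element of $\tilde\Pi=\Gamma(\tilde X)$, and the Modica--Mortola/De Giorgi structure theory gives $E_\varepsilon[u^\Omega_\varepsilon]\le h_0\,\mathrm{Per}(\Omega)+o_\varepsilon(1)=h_0\,\mathbf{M}(\partial\Omega)+o_\varepsilon(1)$ uniformly over the sweepout. The genuine subtlety is continuity: the slices $\Omega_x$ in a flat-continuous sweepout are only Caccioppoli sets, so the reconstruction must be preceded by a continuous replacement of $\Omega_x$ by a set with, say, $C^2$ boundary, and $x\mapsto u^{\Omega_x}_\varepsilon$ must be made $\mathbf{F}$-continuous; this is exactly what the discretization and interpolation machinery of Marques--Neves \cite{marques2014min} provides, as carried out in \cite{gaspar2018allen}. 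Letting $i\to\infty$ and then $\varepsilon\searrow0$ gives the upper bound.

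For the lower bound I would start from a nearly optimal $h_\varepsilon\in\tilde\Pi$ with $\sup_x E_\varepsilon[h_\varepsilon(x)]\le\mathbf{L}_\varepsilon(\tilde\Pi)+\delta$ and extract an Almgren--Pitts sweepout in $\Pi$. The key pointwise estimate is the coarea inequality
\begin{equation*}
\int_{-1}^{1}\sqrt{2W(t)}\,\mathrm{Per}\bigl(\{h_\varepsilon(x)>t\}\bigr)\,dt\;\le\;\int_M\sqrt{2W(h_\varepsilon(x))}\,|\nabla h_\varepsilon(x)|\;\le\;E_\varepsilon[h_\varepsilon(x)],
\end{equation*}
which, together with $\int_{-1}^{1}\sqrt{2W(t)}\,dt=h_0$, produces for each $x$ a level $t$ with $\mathrm{Per}(\{h_\varepsilon(x)>t\})\le h_0^{-1}E_\varepsilon[h_\varepsilon(x)]$. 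One then forms $x\mapsto\partial\llbracket\{h_\varepsilon(x)>t(x)\}\rrbracket\in\mathcal{Z}_1(M;\Z_2)$; since $\{-u>t\}=M\setminus\{u\ge-t\}$, a symmetric choice of levels makes this equivariant in the appropriate sense so that it lands in $\Pi$, and its mass is bounded by $h_0^{-1}(\mathbf{L}_\varepsilon(\tilde\Pi)+\delta)$. Again the flat-continuity and the identification of the homotopy class are handled by the interpolation theorems; letting $\delta\searrow0$ and $\varepsilon\searrow0$ yields $\liminf_\varepsilon\mathbf{L}_\varepsilon(\tilde\Pi)\ge h_0\,\mathbf{L}_\mathrm{AP}(\Pi)$.

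I expect the main obstacle to be exactly the continuity and homotopy-class bookkeeping in both directions — making the reconstruction and slicing maps $\mathbf{F}$-continuous after smoothing or choosing levels, and verifying that the resulting maps genuinely detect the double cover $\pi$ — together with upgrading the Modica--Mortola energy estimate to hold uniformly over a sweepout whose slices are merely Caccioppoli sets. By contrast, the essentially error-free mass bound in the lower-bound direction is immediate from the coarea inequality, and the topological matching, once equivariant maps are in hand, is formal because both total spaces are contractible with free $\Z_2$-action.
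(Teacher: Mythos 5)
The paper does not prove this theorem; it is stated in the Background section and cited directly from Gaspar--Guaraco and Dey. Your sketch faithfully reproduces the approach taken in those references: the two-inequality decomposition, the $\Z_2$-equivariant heteroclinic reconstruction $\Omega\mapsto\bar{\mathbb{H}}(\mathrm{sd}_\Omega/\varepsilon)$ for the upper bound (with Marques--Neves discretization/interpolation to ensure $\mathbf{F}$-continuity), the coarea/level-set slicing for the lower bound with the constant $h_0=\int_{-1}^{1}\sqrt{2W}$, and the formal topological bookkeeping that works because both $\mathbf{I}_2(M;\Z_2)$ and $H^1(M)\setminus\{0\}$ are contractible with free $\Z_2$-actions. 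The outline is correct and matches the cited source; there is nothing in the paper itself to compare it against beyond the citation.
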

Note that this implies that the width $\mathbf{L}_\mathrm{PT}$ of a phase transition class $\tilde{\Pi}$ is independent of the specific form of the double-well potential $W$.
Therefore,
\begin{corollary}
The phase transition spectrum $\{c_p(M,g,W)\}_{p\in\N}$ coincides with the volume spectrum $\{\omega_p(M,g)\}_{p\in\N}$, i.e.
\begin{equation*}
    c_p(M,g,W)=\omega_p(M,g)
\end{equation*}
for all $\in\N$ (independently of the specific form of the double-well potential $W$).
\end{corollary}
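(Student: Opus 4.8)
The plan is to unwind both $c_p$ and $\omega_p$ into infima of widths over homotopy classes of $p$-sweepouts (equivalently, over the double covers in $\mathcal{C}_p$), apply the per-class comparison theorem stated just above, and then deal with the one genuinely non-formal point: interchanging an infimum with the limit $\varepsilon\searrow 0$. \emph{Step 1 (a variational characterization of $\omega_p$).} First I record that $\omega_p(M,g)=\inf_\Pi \mathbf{L}_{\mathrm{AP}}(\Pi)$, the infimum being over all Almgren--Pitts homotopy classes $\Pi$ containing a $p$-sweepout (call such $\Pi$ \emph{$p$-admissible}). Indeed, if $\Phi$ is a $p$-sweepout and $\Phi'$ is $\mathcal{F}$-homotopic to it, then $(\Phi')^*(\bar\lambda^p)=\Phi^*(\bar\lambda^p)\neq 0$, so $\Phi'$ is again a $p$-sweepout; hence $\mathbf{L}_{\mathrm{AP}}(\Pi)\geq\omega_p$ for every $p$-admissible $\Pi$, while the reverse inequality is immediate since every $p$-sweepout lies in some $p$-admissible class.

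\emph{Step 2 (matching the index sets).} By the Gaspar--Guaraco construction of the families $\mathcal{C}_p$ recalled above (see \cite{gaspar2018allen,Dey}), the map $\tilde X\mapsto\Pi_{\tilde X}$ sending a double cover $\tilde X\to X$ to its associated Almgren--Pitts homotopy class lands in the set of $p$-admissible classes and realizes the infimum defining $\omega_p$, i.e. $\inf_{\tilde X\in\mathcal{C}_p}\mathbf{L}_{\mathrm{AP}}(\Pi_{\tilde X})=\omega_p(M,g)$. Combining this with the per-class comparison theorem, which gives $\mathbf{L}_{\mathrm{AP}}(\Pi_{\tilde X})=\mathbf{L}_{\mathrm{PT}}(\Gamma(\tilde X))=h_0^{-1}\lim_{\varepsilon\searrow 0}\mathbf{L}_\varepsilon(\Gamma(\tilde X))$ for each fixed $\tilde X$, yields $\omega_p(M,g)=h_0^{-1}\inf_{\tilde X\in\mathcal{C}_p}\lim_{\varepsilon\searrow 0}\mathbf{L}_\varepsilon(\Gamma(\tilde X))$. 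Since by definition $c_p(M,g,W)=h_0^{-1}\lim_{\varepsilon\searrow 0}\inf_{\tilde X\in\mathcal{C}_p}\mathbf{L}_\varepsilon(\Gamma(\tilde X))$, it remains only to interchange the infimum and the limit.

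\emph{Step 3 (interchanging $\inf$ and $\lim$).} One inequality is elementary: for each fixed $\tilde X_0\in\mathcal{C}_p$, $\inf_{\tilde X}\mathbf{L}_\varepsilon(\Gamma(\tilde X))\leq\mathbf{L}_\varepsilon(\Gamma(\tilde X_0))$, so $\limsup_{\varepsilon\to 0}\inf_{\tilde X}\mathbf{L}_\varepsilon(\Gamma(\tilde X))\leq\lim_{\varepsilon\to 0}\mathbf{L}_\varepsilon(\Gamma(\tilde X_0))=h_0\,\mathbf{L}_{\mathrm{AP}}(\Pi_{\tilde X_0})$; taking the infimum over $\tilde X_0$ and using that $\lim_\varepsilon c_{\varepsilon,p}$ exists by \cite{Dey} gives $c_p\leq\omega_p$. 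For the reverse inequality one must produce, from near-optimal $\Z_2$-equivariant phase-transition sweepouts $h_i:\tilde X_i\to H^1(M)\setminus\{0\}$ with $\sup_x E_{\varepsilon_i}[h_i(x)]\leq c_{\varepsilon_i,p}+o(1)$ and $\varepsilon_i\to 0$, an Almgren--Pitts $p$-sweepout $\Phi_i$: one composes $h_i$ with a map $u\mapsto\partial(\text{superlevel set of }u)$, which descends to the base (being even in $u$ modulo $2$), can be arranged to be $\mathbf{F}$-continuous and to represent $\Pi_{\tilde X_i}$, and whose mass is controlled by the one-dimensional profile bound $\mathbf{M}(\Phi_i(x))\leq h_0^{-1}E_{\varepsilon_i}[h_i(x)]+o(1)$, uniformly in $x$. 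Then $\omega_p\leq\sup_x\mathbf{M}(\Phi_i(x))\leq h_0^{-1}c_{\varepsilon_i,p}+o(1)$, and $i\to\infty$ gives $\omega_p\leq c_p$. This reverse direction is precisely the content of \cite{Dey} (building on \cite{gaspar2018allen}), which in our exposition we simply invoke. Finally, $W$-independence is automatic, since the resulting identity $c_p(M,g,W)=\omega_p(M,g)$ has a right-hand side not involving $W$ (equivalently, it follows from the $W$-independence of $\mathbf{L}_{\mathrm{PT}}$ noted above).

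The main obstacle is Step 3, and within it the inequality $\omega_p\leq c_p$: one needs the passage from an $\varepsilon$-phase-transition sweepout to an Almgren--Pitts sweepout to be continuous in the flat topology, compatible with the double-cover/homotopy structure (so that $p$-sweepouts map to $p$-sweepouts), and quantitatively tight (energy controls mass up to an error tending to $0$ as $\varepsilon\to 0$, uniformly over the parameter complex). This is exactly why \cite{Dey} is required, and why the corollary is not a purely formal consequence of the per-class comparison theorem.
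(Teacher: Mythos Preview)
Your proof is correct and in fact more careful than the paper's own treatment. The paper states this result as an immediate corollary of the per-class comparison theorem (it is introduced simply by ``Therefore'' and given no further argument), without addressing the interchange of $\inf_{\tilde X\in\mathcal{C}_p}$ and $\lim_{\varepsilon\to 0}$ that you correctly flag as the one non-formal step. You are right that this interchange is where the substantive content lies, and that the direction $\omega_p\leq c_p$ requires exactly the passage from near-optimal phase-transition sweepouts to Almgren--Pitts $p$-sweepouts with tight mass--energy control; this is the heart of \cite{Dey}, and the paper is implicitly relying on its full strength here rather than only on the per-class identity it quotes explicitly. Your Step~2 claim that $\inf_{\tilde X\in\mathcal{C}_p}\mathbf{L}_{\mathrm{AP}}(\Pi_{\tilde X})=\omega_p$ is likewise part of the Gaspar--Guaraco/Dey machinery you cite, so the argument is complete as written.
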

\noindent Moreover, Dey \cite{Dey} showed that 
\begin{proposition}\cite[Theorem 1.4]{Dey}
$\mathbf{C}_\mathrm{PT}(\tilde{\Pi})\subset \mathbf{C}_\mathrm{AP}(\Pi)$.
\end{proposition}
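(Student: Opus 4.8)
\emph{Proof proposal.} The plan is to convert every Allen--Cahn min-max sequence realizing $V$ into an Almgren--Pitts minimizing sequence in $\Pi$ whose cycles have the same varifold limit. Fix $V=\lim_i V_{\varepsilon_i}[u_i]\in\mathbf{C}_\mathrm{PT}(\tilde\Pi)$ with $u_i=h_i(x_i)$, $\varepsilon_i\searrow 0$, $x_i\in\tilde X$, and $h_i\in\tilde\Pi=\Gamma(\tilde X)$ chosen with $\sup_{\tilde X}E_{\varepsilon_i}[h_i]\le\mathbf{L}_{\varepsilon_i}(\tilde\Pi)+\tfrac1i$; since $\mathbf{L}_{\varepsilon_i}(\tilde\Pi)\to h_0\mathbf{L}_\mathrm{PT}(\tilde\Pi)=h_0\mathbf{L}_\mathrm{AP}(\Pi)$, we get $\sup_{\tilde X}E_{\varepsilon_i}[h_i]\to h_0\mathbf{L}_\mathrm{AP}(\Pi)$, and Theorem~\ref{thm:HT_reg} forces $\|V\|(M)=h_0^{-1}\lim_iE_{\varepsilon_i}[u_i]\le\mathbf{L}_\mathrm{AP}(\Pi)$. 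The construction proceeds in three steps: (1) a level-set ``sharpening'' map turning $h_i$ into an $\mathbf{F}$-continuous $\Phi_i:X\to\mathcal{Z}_1(M;\Z_2)$; (2) checking $\Phi_i\in\Pi$ and that $\{\Phi_i\}$ is minimizing; (3) matching the varifold limits.

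For step (1): to $u\in H^1(M)\setminus\{0\}$ with $|u|\le 1$ and $\varepsilon>0$, AM--GM gives $\int_M\sqrt{2W(u)}\,|\nabla u|\le E_\varepsilon[u]$, and the coarea formula rewrites the left side as $\int_{-1}^{1}\sqrt{2W(t)}\,\mathcal{H}^1(\{u=t\})\,dt$, with $\int_{-1}^{1}\sqrt{2W(t)}\,dt=h_0$ (the first integral $\tfrac12(\mathbb{H}')^2=W(\mathbb{H})$ for the heteroclinic solution). A good-slice/pigeonhole argument then selects a level $t_i(x)$, depending measurably and—after the standard regularization—continuously on $x$, for which $\{h_i(x)>t_i(x)\}$ has finite perimeter, $\partial\{h_i(x)>t_i(x)\}\in\mathcal{Z}_1(M;\Z_2)$, and $\mathbf{M}\big(\partial\{h_i(x)>t_i(x)\}\big)\le h_0^{-1}E_{\varepsilon_i}[h_i(x)]+o(1)$ uniformly over $\{E_{\varepsilon_i}\le\Lambda\}$ as $\varepsilon_i\to 0$; this is the interpolation machinery of Gaspar--Guaraco \cite{gaspar2018allen} and Dey \cite{Dey}. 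Set $\Phi_i(\pi(x)):=\partial\{h_i(x)>t_i(x)\}$. Since the mod-$2$ boundary of a set equals that of its complement, $\partial\{-u>t\}=\partial\{u>-t\}$, so choosing $t_i(\tau x)=-t_i(x)$ makes the assignment $\Z_2$-invariant and it descends to $X$; $\mathbf{F}$-continuity (including $\mathbb{F}$-continuity of the induced varifolds) is part of the same good-slice construction.

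For step (2): I would verify $\ker((\Phi_i)_*)=\mathrm{im}(\pi_*)$ in $\pi_1(\mathcal{Z}_1(M;\Z_2))\cong\Z_2$ by lifting through the double cover $\partial:\mathbf{I}_2(M;\Z_2)\to\mathcal{Z}_1(M;\Z_2)$, which is a universal cover since $\mathbf{I}_2(M;\Z_2)$ is contractible. The lift $u\mapsto\{u>t\}\in\mathbf{I}_2(M;\Z_2)$ satisfies $\{-u>t\}=M+\{u>-t\}$ mod $2$, and addition of $M$ is exactly the nontrivial deck transformation; hence a loop $\gamma$ in $X$ that lifts to a path from $x_0$ to $\tau x_0$ maps under $\Phi_i$ to a loop whose $\mathbf{I}_2$-lift runs between the two sheets over $\Phi_i(\gamma(0))$, so $(\Phi_i)_*[\gamma]\neq 0$, while a loop lifting to a loop in $\tilde X$ is sent by $h_i$ into the simply connected space $H^1(M)\setminus\{0\}$ and hence to a contractible loop. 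Thus $\Phi_i\in\Pi$, so $\sup_X\mathbf{M}(\Phi_i)\ge\mathbf{L}_\mathrm{AP}(\Pi)$; combined with the mass bound of step (1) and $\sup_{\tilde X}E_{\varepsilon_i}[h_i]\to h_0\mathbf{L}_\mathrm{AP}(\Pi)$, we get $\limsup_i\sup_X\mathbf{M}(\Phi_i)=\mathbf{L}_\mathrm{AP}(\Pi)$, i.e.\ $\{\Phi_i\}$ is a minimizing sequence for $\mathbf{L}_\mathrm{AP}(\Pi)$.

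For step (3): with $\bar x_i:=\pi(x_i)$, since $h_0^{-1}E_{\varepsilon_i}[u_i]\to\|V\|(M)$ the Hutchinson--Tonegawa estimates \cite{hutchinson2000convergence} force the discrepancy measures of $u_i$ to vanish and every level set $\{u_i=t\}$, $|t|<1$, to converge (after the natural weighting) to $V$ as a varifold; choosing the good levels $t_i$ to be typical then yields $|\Phi_i(\bar x_i)|=|\partial\{u_i>t_i\}|\to V$, and in particular $\|V\|(M)=\mathbf{L}_\mathrm{AP}(\Pi)$. Hence $V$ is a varifold limit of cycles $|\Phi_i(\bar x_i)|$ for the minimizing sequence $\{\Phi_i\}\subset\Pi$ with the right mass, so $V\in\mathbf{C}_\mathrm{AP}(\Pi)$. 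The main obstacle is the good-slice/interpolation step: producing, continuously in the parameter, level sets that are cycles with the \emph{sharp} mass bound $h_0^{-1}E_{\varepsilon}+o(1)$ and the correct varifold limit (this is the genuinely technical heart, carried out in \cite{gaspar2018allen,Dey}), whereas the topological bookkeeping via $\mathbf{I}_2\to\mathcal{Z}_1$ and the no-mass-loss input from \cite{hutchinson2000convergence} are comparatively routine.
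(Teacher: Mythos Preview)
The paper does not prove this proposition; it is quoted as background from \cite[Theorem 1.4]{Dey}. Your sketch is essentially the level-set/interpolation strategy of \cite{Dey,gaspar2018allen}, and steps (1)--(2) are broadly correct (the coarea/good-slice mass bound, the $\Z_2$-equivariant descent, and the $\pi_1$ bookkeeping via the double cover $\partial:\mathbf{I}_2\to\mathcal{Z}_1$ are the right ingredients).

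There is, however, a genuine gap at the very start: you write ``$u_i=h_i(x_i)$'' as though the Allen--Cahn min-max critical point $u_i$ is the value of some almost-optimal sweepout $h_i\in\tilde\Pi$ at a point $x_i\in\tilde X$. The $u_i$ appearing in the definition of $\mathbf{C}_{\mathrm{PT}}(\tilde\Pi)$ are \emph{critical points} of $E_{\varepsilon_i}$ produced by a Palais--Smale/deformation argument (Proposition~\ref{prop:AC_minmax_existence}); they are not a priori in the image of any sweepout. Without this, your step~(3) does not connect: you need $\Phi_i(\bar x_i)=\partial\{u_i>t_i\}$, but $\Phi_i$ was built from $h_i$, not from $u_i$. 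Dey handles this by an explicit modification (``insertion'') argument: one deforms the almost-optimal sweepout $h_i$ to a new $\Z_2$-equivariant map $h_i'\in\tilde\Pi$ that actually passes through $u_i$ (and $-u_i$), keeping $\sup_{\tilde X}E_{\varepsilon_i}[h_i']\le\mathbf{L}_{\varepsilon_i}(\tilde\Pi)+o(1)$. This step is not difficult but is not free; you should flag it rather than absorb it into the phrase ``$h_i\in\tilde\Pi$ chosen with''.
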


Up until this point, all the results we have described in this section are independent of the specific form of the double-well potential $W$. However, it turns out that, in the case of ambient dimension $n+1=2$, a clever choice of double-well potential enables one to make use of a remarkable result of Liu--Wei \cite{liu2022sine-gordon}, which classifies entire phase transitions on $\R^2$ that are regular at infinity, provided one uses the \textit{sine-Gordon} double well-potential. We refer to \cite[Section 3]{chodosh2023p} for more details. Nevertheless, we shall briefly record the geometric consequences of Liu--Wei's classification, due to Chodosh--Mantoulidis in \cite{chodosh2023p}. The statement below is a more explicit description of \Cref{CMGeodesicThm}.

\begin{theorem}[{\cite[Theorem 3.1]{chodosh2023p}}]\label{thm:sine-gordon}
Let $W$ be the sine-Gordon double-well potential defined by
\begin{equation}\label{eq:sine-Gordon}
    W(t)\coloneqq\frac{1+\cos(\pi t)}{\pi^2}.
\end{equation}
Let $\{(u_i,\varepsilon_i)\}_i\subset C^\infty(M)\times (0,+\infty)$, where $\varepsilon_i\to 0$, and each $u_i$ satisfies
\[\varepsilon_i^2\Delta u_i=W'(u_i)\]
on $M$, where $W$ is given by \eqref{eq:sine-Gordon}. Assume that
\[\limsup_i\sup_M|u_i|\leq c_0<+\infty,\, \limsup_iE_{\varepsilon_i}[u_i]\leq E_0<+\infty,\]
and that the Morse index of $u_i$ as a critical point of $E_{\varepsilon_i}$ is uniformly bounded, i.e.
\begin{equation*}
    \limsup_i\mathrm{index}_{E_{\varepsilon_i}}(u_i)\leq I_0<+\infty.
\end{equation*}
Then, after passing to a subsequence, the corresponding varifolds $V_{\varepsilon_i}[u_i]$ converge to a stationary integral 1-varifold $V$ such that
\begin{equation*}
    V=\sum_{j=1}^N m_j|\sigma_j|,
\end{equation*}
where $\sigma_1, \dots, \sigma_N$ are closed immersed geodesics, and $m_1,\dots, m_N$ are positive integers.
\end{theorem}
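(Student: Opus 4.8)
The plan is to deduce the theorem from the Hutchinson--Tonegawa--Guaraco regularity of \Cref{thm:HT_reg} together with a blow-up analysis that exploits both the uniform Morse index bound and the special structure of the sine-Gordon nonlinearity. First I would apply \Cref{thm:HT_reg} to the sequence $\{(u_i,\varepsilon_i)\}_i$: after passing to a subsequence, $V_{\varepsilon_i}[u_i]\to V$ for a stationary integral $1$-varifold $V$ with $\|V\|(M)=h_0^{-1}\lim_i E_{\varepsilon_i}[u_i]$. By the structure theory of stationary integral $1$-varifolds on a surface recorded in \S\ref{sec:background}, $\mathrm{supp}\|V\|$ is a stationary geodesic network: away from a finite set of points it is a finite disjoint union of embedded geodesic arcs carrying constant integer density, and at each of the finitely many remaining points the tangent cone is a stationary geodesic cone in $\R^2$. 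Thus the whole content of the theorem is the assertion that no \emph{genuine junctions} occur, i.e.\ that at every point the tangent cone of $V$ is a finite union of lines through the origin (with positive integer multiplicities). Granting this, one pairs the geodesic arcs emanating from each such point into geodesics passing through it, traces these out among the finitely many arcs to obtain closed immersed geodesics $\sigma_1,\dots,\sigma_N$, and reads off the multiplicities $m_j\in\N$ from the locally constant density of $V$ along each $\sigma_j$, yielding $V=\sum_{j=1}^N m_j|\sigma_j|$.

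To rule out genuine junctions I would argue at a fixed $x_0\in\mathrm{supp}\|V\|$ by a two-scale blow-up. Choose radii $r_i\to 0$ with $\varepsilon_i/r_i\to 0$ so that rescaling the metric around $x_0$ by $r_i^{-1}$ exhibits in the limit the tangent cone $C$ of $V$ at $x_0$. Since the transition scale of $u_i$ at this resolution is still $\varepsilon_i/r_i\to 0$, one rescales once more by $(\varepsilon_i/r_i)^{-1}$ and, after a further subsequence, obtains an entire solution $U$ of $\Delta U=W'(U)$ on $\R^2$ whose level-set varifold $V_1[U]$ blows down to $C$. The key inputs are then: (i) $U$ has \emph{finite Morse index}, inherited from $\limsup_i\mathrm{index}_{E_{\varepsilon_i}}(u_i)\le I_0$ by lower semicontinuity of the index under this localized $C^2_{\mathrm{loc}}$ blow-up; and (ii) a finite-index entire Allen--Cahn solution on $\R^2$ is stable outside a compact set, hence has level sets of bounded curvature, i.e.\ is \emph{regular at infinity} in the sense required for the classification of entire sine-Gordon phase transitions.

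Now I would invoke the Liu--Wei classification \cite{liu2022sine-gordon} of entire solutions of the sine-Gordon Allen--Cahn equation on $\R^2$ that are regular at infinity, in the form used in \cite[Section 3]{chodosh2023p}: such a $U$ is, up to a rigid motion, assembled from one-dimensional heteroclinic profiles along a finite configuration of lines, so that its blow-down $C$ is a finite union of lines through the origin with positive integer multiplicities. This excludes the multiplicity-one triple junction and, crucially, also the higher-multiplicity junctions that the parity and integrality arguments of Aiex and Zhou--Zhu \cite{aiex2016width, zhou2020geodesics} leave open. Since $x_0$ was arbitrary, every tangent cone of $V$ is a union of lines, which as explained above is precisely the statement that $V$ is supported on a finite union of closed immersed geodesics with positive integer multiplicities.

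I expect the main obstacle to be the blow-up step: (a) organizing the two competing length scales $r_i$ and $\varepsilon_i$ so that the double rescaling converges to a genuine entire solution whose blow-down really is the tangent cone of $V$; (b) transferring the uniform index bound through the localization to conclude that $U$ has finite index and is stable near infinity, which requires care with how test functions on $M$ are cut off and rescaled; and (c) checking that finite index does force $U$ to be regular at infinity in the precise sense demanded by \cite{liu2022sine-gordon}, and then converting their analytic classification into the geometric statement ``no junctions.'' All of this is carried out in \cite{chodosh2023p}; the point of the sine-Gordon choice over a generic double well is exactly that its integrable structure makes the classification in step (c) available, which is what upgrades ``stationary geodesic network'' to ``union of closed immersed geodesics.''
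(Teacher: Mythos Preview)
The paper does not give its own proof of this statement: \Cref{thm:sine-gordon} is quoted verbatim as \cite[Theorem 3.1]{chodosh2023p} and serves purely as background, with the authors writing only that the sine-Gordon choice ``enables one to make use of a remarkable result of Liu--Wei \cite{liu2022sine-gordon}'' and referring to \cite[Section 3]{chodosh2023p} for details. So there is no in-paper proof to compare against.

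That said, your outline is an accurate summary of the Chodosh--Mantoulidis argument as the paper describes it: first apply \Cref{thm:HT_reg} to get a stationary integral $1$-varifold (a geodesic network), then rule out genuine junctions by a blow-up at each vertex using the uniform index bound to produce a finite-index entire solution on $\R^2$, and finally invoke the Liu--Wei classification \cite{liu2022sine-gordon} of sine-Gordon entire solutions regular at infinity to conclude the tangent cone is a union of lines. Your identification of the delicate points (organizing the two scales, transferring the index bound through the blow-up, and verifying regularity at infinity) is also apt; these are exactly the issues handled in \cite[Section 3]{chodosh2023p}, not in the present paper.
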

\noindent From this result, Theorem \ref{CMGeodesicThm} follows directly.

From now on, we shall always assume our double-well potential is the sine-Gordon potential
\[W(t) = \frac{1+\cos(\pi t)}{\pi^2},\]
and we shall restrict our attention to elements in $\mathbf{C}_\mathrm{PT}(\tilde{\Pi})\subset\mathbf{C}_\mathrm{AP}(\Pi)$ which are supported on a union of closed immersed geodesics as in Theorem \ref{thm:sine-gordon}.

\section{Preliminaries} \label{prelims}

Let $(M^2, g)$ be a closed Riemannian surface.

\subsection{Notations and terminology}
Let $\gamma : S^1 \to M$ be a $C^2$ immersed parametrized loop in $M$. We write
\[ \mathrm{length}(\gamma) = \int_{S^1} |\gamma'(\theta)|\ d\theta. \]
We write $T(s) := \gamma'(s)/|\gamma'(s)|$ as the unit tangent vector field along $\gamma$. The curvature of $\gamma$ is the vector field $\kappa = -\nabla_TT$. $\gamma$ is a geodesic if $\kappa = 0$.

\begin{definition}
    $\gamma$ is \emph{primitive} if there is no decomposition $S^1 = P_1\sqcup P_2$ by disjoint connected subsets so that $\gamma(P_1) = \gamma(P_2) = \gamma(S^1)$.
\end{definition}

In colloquial terms, $\gamma$ is primitive if the parametrization traverses its image once (and not multiple times). We let
\begin{itemize}
    \item $\mathcal{G}$ denote the set of finite collections of immersed geodesic loops in $(M, g)$, each parametrized by constant speed on the circle of length $2\pi$.
    \item $\mathcal{G}_{\mathrm{prim}} \subset \mathcal{G}$ denote the set of $\Gamma \in \mathcal{G}$ so that each $\gamma \in \Gamma$ is primitive and for any $\gamma^1,\ \gamma^2 \in \Gamma$ with $\gamma^1 \neq \gamma^2$, $\gamma^1(S^1) \neq \gamma^2(S^1)$.
\end{itemize}

\begin{remark}\label{rem:prim}
    We can associate to any $\Gamma \in \mathcal{G}$ some $\tilde{\Gamma} \in \mathcal{G}_{\mathrm{prim}}$ by deleting parametrized loops with redundant images and taking a primitive parametrization of the image of each remaining loop. Moreover, $\tilde{\Gamma}$ is canonical up to reparametrization of each $\tilde{\gamma} \in \tilde{\Gamma}$. We call such a $\tilde{\Gamma}$ a \emph{primitive representative} of $\Gamma$.
\end{remark}

\begin{definition}
    Given $\Gamma\in\mathcal{G}_{\mathrm{prim}}$, for $x \in M$, define 
    %
    \begin{equation} \label{orderDef}
        \text{ord}_{\Gamma}(x):= \sum_{\gamma \in \Gamma}\#\gamma^{-1}(x)
    \end{equation}
    For general $\Gamma \in G$, we define $\text{ord}_{\Gamma}(x): = \text{ord}_{\tilde{\Gamma}}(x)$ for the canonically associated $\tilde{\Gamma}$ as in Remark \ref{rem:prim}.
\end{definition}

\begin{definition} \label{SelfIntDef}
    $\Gamma \in \mathcal{G}_{\mathrm{prim}}$ has a \emph{vertex} at $x \in M$, written $x \in \mathrm{Vert}(\Gamma)$, if $\text{ord}_{\Gamma}(x) > 1$.
    %
\end{definition}

\begin{remark}
    The set of vertices $\mathrm{Vert}(\Gamma)$ is finite for any $\Gamma \in \mathcal{G}_{\mathrm{prim}}$.
\end{remark}

\begin{definition} 
$\Gamma \in \mathcal{G}_{\mathrm{prim}}$ has an \emph{order $2$} vertex at $x\in M$ if $\ord_{\Gamma}(x) = 2$. 
\end{definition}

We let
\begin{itemize}
    \item $\mathcal{G}_{+} \subset \mathcal{G}_{\mathrm{prim}}$ denote the set of $\Gamma \in \mathcal{G}_{\mathrm{prim}}$ so that every vertex of $\Gamma$ is order $2$.
\end{itemize}

\subsection{Compactness under length bounds}
%

\begin{definition}
    $\{\Gamma_i\}_{i\in\N} \subset \mathcal{G}$ converges \emph{in $C^k$} to $\Gamma = \{\gamma^1, \hdots, \gamma^N\} \in \mathcal{G}$ if for all $i$ sufficiently large we have $\Gamma_i = \{\gamma_i^1, \hdots, \gamma_i^N\}$ so that $\{\gamma_i^j\}_i$ converges to $\gamma^j$ in the $C^k$-graphical sense for immersions. We say this convergence is \emph{smooth} if it holds for all $k \geq 0$.
\end{definition}

For $L > 0$, let $\mathcal{G}^L$ denote the set of $\Gamma \in \mathcal{G}$ with
\[ \mathrm{length}(\Gamma) := \sum_{\gamma \in \Gamma} \mathrm{length}(\gamma) \leq L. \]
We also let $\mathcal{G}^L_{\mathrm{prim}} = \mathcal{G}^L \cap \mathcal{G}_{\mathrm{prim}}$.

\begin{theorem}\label{thm:compactness}
 Let $g$ be a $C^{k, \alpha}$ metric with $k \geq 2$. Then $\mathcal{G}^L$ is sequentially compact under $C^{\ell}$ convergence for any $1 \leq \ell \leq k$. 
\end{theorem}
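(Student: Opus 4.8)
The plan is to prove sequential compactness of $\mathcal{G}^L$ under $C^\ell$ convergence by a standard Arzelà–Ascoli argument exploiting the geodesic equation to promote low-regularity bounds to bounds on all derivatives up to order $k+1$. Throughout, I would use the normalization that each loop is parametrized with constant speed on $S^1 = \R/2\pi\Z$, so that for each individual loop $\gamma$ in a collection $\Gamma \in \mathcal{G}^L$ one has $|\gamma'(\theta)| = \mathrm{length}(\gamma)/2\pi \le L/2\pi$, a uniform bound. The first observation is that the number $N$ of loops in any $\Gamma \in \mathcal{G}^L$ is uniformly bounded: each closed geodesic has length at least the injectivity radius (or twice it), which is bounded below by a positive constant depending only on $(M,g)$, so $N \le L/\mathrm{injrad}(M,g)$. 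Hence, passing to a subsequence, we may assume all $\Gamma_i$ have the same number $N$ of components, and we argue componentwise, so it suffices to treat a sequence $\{\gamma_i\}$ of closed geodesics with $\mathrm{length}(\gamma_i) \le L$, all constant-speed parametrized on $S^1$.

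Next I would establish uniform $C^{k+1,\alpha}$ bounds on the $\gamma_i$ via bootstrapping the geodesic equation. In local coordinates the geodesic equation reads $\gamma_i'' {}^{\,a} = -\Gamma^a_{bc}(\gamma_i)\gamma_i'^{\,b}\gamma_i'^{\,c}$, where the Christoffel symbols $\Gamma^a_{bc}$ are $C^{k-1,\alpha}$ since $g$ is $C^{k,\alpha}$. The $C^0$ bound on $\gamma_i'$ (speed bound) together with this equation gives a $C^0$ bound on $\gamma_i''$, hence $\gamma_i$ is uniformly bounded in $C^{1,1}$, in particular in $C^{1,\alpha}$. Feeding $\gamma_i \in C^{1,\alpha}$ back into the right-hand side, which is a $C^{k-1,\alpha}$ function of $\gamma_i$ composed with quadratic terms in $\gamma_i' \in C^{0,\alpha}$, shows $\gamma_i'' \in C^{0,\alpha}$ with uniform bounds, so $\gamma_i \in C^{2,\alpha}$ uniformly; iterating, each differentiation of the equation gains one order of regularity until the Christoffel regularity is exhausted, yielding a uniform bound in $C^{k+1,\alpha}$ (one derivative better than $g$). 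By Arzelà–Ascoli applied to $\gamma_i$ and its derivatives up to order $k+1$, a subsequence converges in $C^{k+1}$ — and in particular in $C^\ell$ for every $1 \le \ell \le k$ — to a limit loop $\gamma_\infty \in C^{k+1}$.

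It then remains to check that the limit lies in $\mathcal{G}^L$ and that the convergence is $C^\ell$-graphical in the sense of the definition. The limit $\gamma_\infty$ satisfies the geodesic equation (pass to the limit in the equation, using $C^1$ convergence on the right side), has constant speed $\le L/2\pi$, and $\mathrm{length}(\gamma_\infty) = \lim \mathrm{length}(\gamma_i) \le L$; and since the speeds converge, if some $\gamma_\infty$ had zero speed the corresponding length would be zero and that component can be discarded, so after relabeling we retain a genuine collection of immersed constant-speed geodesic loops, i.e. $\gamma_\infty \in \mathcal{G}$ with $\mathrm{length} \le L$. Finally, $C^\ell$ convergence of the parametrizations as immersions gives $C^\ell$-graphical convergence: for $i$ large, $\gamma_i$ lies in a tubular neighborhood of $\gamma_\infty$ and can be written as a normal graph over $\gamma_\infty$ with graph function tending to $0$ in $C^\ell$, because the parametrizations are $C^\ell$-close and $\gamma_\infty$ is an immersion so the nearest-point projection is well-defined and smooth near $\gamma_\infty(S^1)$. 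Assembling the $N$ components gives the claimed $C^\ell$ convergence $\Gamma_i \to \Gamma_\infty$ in $\mathcal{G}^L$.

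The main obstacle — really the only subtle point — is handling the interaction between the \emph{a priori} parametrization and the notion of convergence: one must be careful that the constant-speed parametrization is exactly what makes Arzelà–Ascoli applicable (without it, geodesics could "bunch up" and one would only get convergence after reparametrization), and one must verify that a component whose length degenerates to $0$ genuinely disappears rather than causing the count $N$ to be wrong in the limit; this is why passing to a subsequence to fix $N$, and then discarding degenerate components, is done carefully. The regularity bootstrap itself is routine Schauder-type elliptic (here ODE) theory, but I would state it cleanly since the precise gain ($g \in C^{k,\alpha} \Rightarrow \gamma \in C^{k+1,\alpha}$) is what guarantees $C^\ell$ convergence for the full range $1 \le \ell \le k$.
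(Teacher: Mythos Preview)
Your proposal is correct and follows essentially the same approach as the paper: bound the number of components via the injectivity radius, use the constant-speed parametrization to get a uniform $C^1$ bound, bootstrap the geodesic equation to obtain uniform $C^{k+1,\alpha}$ bounds, apply Arzel\`a--Ascoli for convergence as maps, and then pass to graphical convergence via the normal exponential map. The only cosmetic differences are that the paper isometrically embeds $(M,g)$ into Euclidean space rather than working in local coordinates, and it treats the single-loop case first before invoking the injectivity-radius bound on the number of components; also, your concern about components degenerating to zero length is moot once you have used the injectivity-radius lower bound on the length of each closed geodesic.
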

\begin{proof}
First consider the case of $\Gamma_i = \{\gamma_i\}$ (i.e.\ $\Gamma_i$ consists of one parametrized loop).

\emph{Convergence as maps}: By assumption, we have $|\gamma_i'(s)| = L(\gamma_i)/2\pi \leq L/2\pi$. We isometrically embed $(M, g)$ in $\R^L$. Since $\gamma_i$ is a geodesic, we have $\frac{D}{dt}\frac{d\gamma_i}{dt} = 0$, and therefore $\frac{D^{\ell}}{dt^{\ell}}\frac{d\gamma_i}{dt} = 0$ holds with $C^{\alpha}$ coefficients for all $1 \leq \ell \leq k$. Since $M$ is compact and smooth, we have uniform $C^{\ell + 1, \alpha}$ bounds for $\{\gamma_i\}$ as maps to $\R^L$. By Arzelà-Ascoli, there is a subsequence (not relabeled) that converges in $C^{\ell + 1}$ as maps from $S^1 \to M \subset \R^L$ to a constant speed parametrized geodesic $\gamma$ in $M$. 

\emph{Convergence as graphs}: Since $M$ is compact and $\gamma$ is a closed geodesic, there is some $\eps > 0$ so that the normal exponential map along $\gamma$ gives a diffeomorphism from $N_{\eps}\gamma$ (the vectors of length at most $\eps$ in the normal bundle of $\gamma$) to a small tubular neighborhood $U$ of $\gamma$ in $M$. By $C^0$ convergence, $\gamma_i(S^1) \subset U$ for all $i$ sufficiently large. By $C^{\ell + 1}$ convergence as maps, we can find $C^{\ell}$ sections $S_i : S^1 \to N_{\eps}\gamma$ parametrizing $\gamma_i$ under the normal exponential map, converging smoothly to the zero section as $i \to \infty$, which gives $C^{\ell}$ graphical convergence.

%

Generalizing to any element $\Gamma \in \mathcal{G}^L$, note that any closed geodesic will have length at least the injectivity radius of $(M, g)$, which is positive. Thus any $\Gamma \in \mathcal{G}^L$ has a bounded number of geodesic loops, and we can repeat the one element construction a finite number of times. 
\end{proof}
We record similar lemmas for varying metrics, which follow from the fact that, if $g_i \xrightarrow{C^2} g_0$, then the coefficients of the $g_i$-geodesic equation converge to those of the $g_0$-geodesic equation.
\begin{restatable}{lemm}{lem:compact_var_met}\label{lem:compact_var_met}
Suppose $g_i \xrightarrow{C^2} g_0$ is a converging sequence of metrics and $\{\gamma_i\}$ is a sequence of geodesics with respect to $g_i$ such that $\mathrm{length}_{g_i}(\gamma_i) \leq L$. Then up to a subsequence, we have graphical convergence of $\gamma_i \xrightarrow{C^2} \gamma_0$, a geodesic with respect to $g_0$.
\end{restatable}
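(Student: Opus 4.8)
The statement to prove is \Cref{lem:compact_var_met}, a minor variant of \Cref{thm:compactness} allowing the metric to vary. The plan is to mimic the proof of \Cref{thm:compactness} almost verbatim, tracking how the varying metrics enter.

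\textbf{Step 1: Uniform speed and isometric embedding.} Since $\gamma_i$ is parametrized with constant speed on the circle of length $2\pi$, we have $|\gamma_i'(s)|_{g_i} = \mathrm{length}_{g_i}(\gamma_i)/2\pi \leq L/2\pi$. Because $g_i \xrightarrow{C^2} g_0$, all $g_i$ are uniformly comparable to $g_0$; fix a Nash isometric embedding of $(M, g_0)$ into some $\R^Q$, so each $(M, g_i)$ sits in $\R^Q$ with $g_i$-speed of $\gamma_i$ uniformly bounded (up to a fixed constant) in the Euclidean norm as well.

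\textbf{Step 2: Uniform $C^{2}$ bounds as maps.} The geodesic equation for $\gamma_i$ with respect to $g_i$ reads $\gamma_i''{}^k = -\Gamma^k_{\ell m}(g_i)(\gamma_i)\,\gamma_i'^\ell \gamma_i'^m$ in local coordinates (equivalently, in the ambient $\R^Q$, $\gamma_i'' = \mathrm{II}_{g_i}(\gamma_i', \gamma_i')$ is normal to $M$ with the second fundamental form of the embedding $(M,g_i)\hookrightarrow\R^Q$). Since $g_i \to g_0$ in $C^2$, the Christoffel symbols $\Gamma^k_{\ell m}(g_i)$ are uniformly bounded in $C^0$ (indeed converge uniformly to $\Gamma^k_{\ell m}(g_0)$), and the first-derivative bound on $\gamma_i$ then gives a uniform $C^0$ bound on $\gamma_i''$, hence a uniform $C^{1,1} \subset C^{2}$-type bound; more precisely we get uniform $C^1$ bounds plus equicontinuity of $\gamma_i''$ from the continuity of the coefficients. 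By Arzelà--Ascoli, a subsequence (not relabeled) converges in $C^{2}$ as maps $S^1 \to M \subset \R^Q$ to some constant-speed curve $\gamma_0$. Passing to the limit in the geodesic equation — using $\Gamma^k_{\ell m}(g_i) \to \Gamma^k_{\ell m}(g_0)$ in $C^0$ and $C^2$ convergence of $\gamma_i$ — shows $\gamma_0$ satisfies the $g_0$-geodesic equation, so $\gamma_0$ is a $g_0$-geodesic.

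\textbf{Step 3: Upgrade to graphical convergence.} Since $\gamma_0$ is a closed $g_0$-geodesic (possibly a point if $\mathrm{length}$ degenerates, but closed geodesics have length at least the injectivity radius of $(M,g_0)$, which together with $g_i \to g_0$ forces a uniform positive lower bound on $\mathrm{length}_{g_i}(\gamma_i)$, ruling this out), the $g_0$-normal exponential map gives a diffeomorphism from the $\eps$-disk bundle $N_\eps \gamma_0$ onto a tubular neighborhood $U$ of $\gamma_0$. By $C^0$ convergence $\gamma_i(S^1) \subset U$ for $i$ large, and by $C^{2}$ convergence as maps the normal graph functions $S_i : S^1 \to N_\eps\gamma_0$ representing $\gamma_i$ converge in $C^{2}$ to the zero section, which is precisely $C^2$ graphical convergence. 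This is the same argument as in \Cref{thm:compactness}. The only real subtlety — and the main thing to get right — is that the geodesic ODE coefficients depend on $g_i$, so one must invoke $C^2$ (rather than merely $C^0$) convergence of the metrics to get convergence, not just boundedness, of the Christoffel symbols, and to pass to the limit in the equation; everything else is a routine Arzelà--Ascoli argument identical to the fixed-metric case.
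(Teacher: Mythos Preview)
Your proof is correct and follows exactly the approach the paper intends: the paper does not spell out a proof of this lemma at all, but simply remarks that it ``follows from the fact that, if $g_i \xrightarrow{C^2} g_0$, then the coefficients of the $g_i$-geodesic equation converge to those of the $g_0$-geodesic equation,'' i.e.\ one reruns the proof of \Cref{thm:compactness} with varying Christoffel symbols. You have carried this out faithfully, including the key observation that $C^2$ convergence of the metrics is what guarantees convergence (not just boundedness) of the Christoffel symbols so that the limit curve solves the $g_0$-geodesic equation.
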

\begin{restatable}{lemm}{lem:Jacobi_fields}\label{lem:Jacobi_fields}
Suppose there exist pairs $\{(\gamma_i^1, \gamma_i^2)\}$ of geodesics with respect to $g_i$ such that $\gamma_i^1, \gamma_i^2 \xrightarrow{C^{2}} \gamma_0$ (graphically), $g_i \xrightarrow{C^2} g_0$, and the graphical parametrizations of $\gamma_i^1$ and $\gamma_i^2$ are distinct. If $\gamma_0$ is two-sided, then it admits a nontrivial Jacobi field with respect to $g_0$. If it is one-sided, then its double cover admits a nontrivial Jacobi field. 
\end{restatable}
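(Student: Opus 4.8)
\textbf{Proof proposal for Lemma \ref{lem:Jacobi_fields}.}

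The plan is to treat the one-parameter-family-of-geodesics picture in a limiting sense: the pair $(\gamma_i^1, \gamma_i^2)$, both converging to $\gamma_0$, behaves like a ``difference quotient'' whose limit should be a Jacobi field, provided we normalize correctly. First I would set up coordinates. Using Lemma \ref{lem:compact_var_met} (or its proof) I fix a tubular neighborhood $U$ of $\gamma_0$ in $M$ on which the $g_0$-normal exponential map is a diffeomorphism from $N_\eps\gamma_0$, and for $i$ large I write $\gamma_i^1$ and $\gamma_i^2$ as normal graphs over $\gamma_0$ given by sections $s_i^1, s_i^2 : S^1 \to N_\eps\gamma_0$, both converging to $0$ in $C^2$. (In the two-sided case $N\gamma_0$ is trivial so a section is just a function $f_i^\ell : S^1 \to \R$; in the one-sided case I pass to the orientation double cover $\pi : \widetilde{\gamma_0} \to \gamma_0$, lift the metrics $g_i$ and the graphs, and work there, where the normal bundle is orientable — this is exactly why the statement dichotomizes.) Because the graphical parametrizations are distinct, $w_i := s_i^1 - s_i^2 \not\equiv 0$; set $u_i := w_i / \|w_i\|_{C^0}$ (or $L^2$; either works), so $\|u_i\|_{C^0} = 1$.

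The key step is a linearization estimate. The condition that a normal graph with section $s$ over $\gamma_0$ is a $g_i$-geodesic is a second-order quasilinear ODE $\mathcal{N}_{g_i}(s) = 0$ for $s$, depending smoothly on $g_i$ in $C^2$. Since both $s_i^1$ and $s_i^2$ solve $\mathcal{N}_{g_i}(\cdot) = 0$, subtracting and using the mean value theorem / fundamental theorem of calculus along the segment from $s_i^2$ to $s_i^1$ gives a linear ODE of the form $L_i(w_i) = 0$, where $L_i$ is the linearization of $\mathcal{N}_{g_i}$ at an interpolated section $\xi_i$ lying between $s_i^1$ and $s_i^2$, hence $\xi_i \to 0$ in $C^2$. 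Dividing by $\|w_i\|$, we get $L_i(u_i) = 0$ with $\|u_i\|_{C^0} = 1$. Now $L_i$ has coefficients built from $g_i$ (to second order) and $\xi_i$; since $g_i \to g_0$ in $C^2$ and $\xi_i \to 0$, these coefficients converge in $C^0$ to those of $L_0 := $ linearization of $\mathcal{N}_{g_0}$ at the zero section $= $ the Jacobi operator of $\gamma_0$ with respect to $g_0$. By the ODE (interior Schauder / elliptic) estimates applied to $L_i u_i = 0$, the $C^0$ bound on $u_i$ upgrades to a uniform $C^{2}$ (indeed $C^{2,\alpha}$) bound, so by Arzelà–Ascoli a subsequence converges in $C^2$ to some $u_0$ with $L_0 u_0 = 0$, i.e. $u_0$ is a Jacobi field along $\gamma_0$ (resp. along $\widetilde{\gamma_0}$).

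The remaining point — and the main obstacle — is to rule out $u_0 \equiv 0$, i.e. to prevent the normalized differences from converging to zero. This does not follow from $\|u_i\|_{C^0} = 1$ alone, since the sup could be attained in a vanishing-width region; but the standard fix is that the elliptic estimate gives $\|u_i\|_{C^{2,\alpha}} \leq C$ uniformly, and the convergence $u_i \to u_0$ in $C^0$ is genuine (not just weak), so $\|u_0\|_{C^0} = \lim \|u_i\|_{C^0} = 1 \neq 0$. Thus $u_0$ is a nontrivial Jacobi field, which is what ``$\gamma_0$ admits a nontrivial Jacobi field'' means. One subtlety to check: if we had normalized in $L^2$ instead, we need the $C^{2,\alpha}$-bound-plus-$L^2$-normalization to still force a nonzero limit, which again follows from compactness of $C^{2,\alpha} \hookrightarrow C^0$ and the fact that $L^2$-norm is continuous under $C^0$ convergence on the fixed compact domain $S^1$. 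A final bookkeeping remark: in the one-sided case, the Jacobi field produced on $\widetilde{\gamma_0}$ need not descend to $\gamma_0$ (it may be anti-invariant under the deck transformation), which is precisely why the statement only claims a Jacobi field on the double cover there; no extra argument is needed beyond carrying the construction out upstairs.
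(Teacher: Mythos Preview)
Your proposal is correct and follows essentially the same approach as the paper: write both geodesics as normal graphs over $\gamma_0$, normalize the difference in $C^0$, observe that it (approximately) solves the Jacobi equation with coefficients converging to those of $J_{\gamma_0,g_0}$, and use Arzel\`a--Ascoli plus the $C^0$ normalization to extract a nontrivial Jacobi field, passing to the double cover in the one-sided case. Your write-up is in fact more explicit than the paper's about the linearization step (mean value theorem along the interpolated section) and about the elliptic bootstrap from $C^0$ to $C^{2,\alpha}$ needed for compactness; the paper simply writes $0 = J_{\gamma_0,g_i}(\phi_i) + O(\|u_i^1-u_i^2\|_{C^0})$ and invokes Arzel\`a--Ascoli directly.
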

\begin{proof}
For all large enough $i$, we can represent $\gamma^1_i$ and $\gamma^2_i$ as graphs over $\gamma_0$ as follows
\begin{align*}
\gamma_i^1 &= \exp_{\gamma_0, g_i}(u_i^1) \\
\gamma_i^2 &= \exp_{\gamma_0, g_i}(u_i^2),
\end{align*}
where we emphasize that the exponential map is being computed with respect to $g_i$. We also emphasize that $u_i^1, u_i^2$ are sections of the normal bundle of $\gamma_0$, so that locally (but not globally if $\gamma_0$ is unoriented), we can represent $u_i^k = f_i^k(s) \nu(s)$ where $f_i^k: S^1 \to \R$ and $\nu$ is a local choice of normal. Now define 
\[
\phi_i = \frac{u_i^1 - u_i^2}{|| u_i^1 - u_i^2||_{C^0}}
\]
and consider
\begin{equation*}
0 = \frac{H_{\gamma_i^1} - H_{\gamma_i^2}}{||u_i^1 - u_i^2||_{C^0}}
\end{equation*}
so that
\begin{equation*}
0 = J_{\gamma_0, g_i}(\phi_i) + O(||u_i^1 - u_i^2||_{C^0}),
\end{equation*}
where $J_{\gamma_0, g_i}$ denotes the Jacobi operator of the normal bundle of $\gamma_0$ with respect to the metric $g_i$. When $\gamma_0$ is two-sided, the locally defined function $\phi_i$ patches together to form a global map $\phi_i: \gamma_0 \to \R$. We apply Arzelà--Ascoli to the normalized functions $\{\phi_i\}$, we get convergence $\phi_i \to \phi_{\infty}$, and $\phi_\infty$ is a Jacobi field on $\gamma_0$ (with respect to $g_0$) such that $||\phi_{\infty}||_{C^0} = 1$. When $\gamma_0$ is one-sided, we pass to the double cover and repeat the construction.
\end{proof}
We note that the above also handles the case in which a primitive geodesic converges with multiplicity: 
\begin{restatable}{corr}{GeoMultCor} \label{GeoMultCor}
Suppose $\{\gamma_i\}$ is a sequence of primitive geodesics with respect to $g_i$ such that $g_i \xrightarrow{C^2} g_0$ and $\gamma_i \xrightarrow{C^2} m \cdot \gamma_0$ (graphically), where $\gamma_0$ is primitive and $m \cdot \gamma_0$ denotes $\gamma_0$ traversed with multiplicity $m \geq 2$. If $\gamma_0$ is two-sided, then it admits a  nontrivial Jacobi field. If $\gamma_0$ is one-sided, then its double cover admits a nontrivial Jacobi field. 
\end{restatable}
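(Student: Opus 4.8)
\textbf{Proof proposal for Corollary \ref{GeoMultCor}.}

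The plan is to reduce to Lemma \ref{lem:Jacobi_fields} by producing, from a single primitive geodesic $\gamma_i$ converging to $\gamma_0$ with multiplicity $m \geq 2$, a pair of distinct graphical parametrizations over $\gamma_0$ whose difference, suitably normalized, converges to a Jacobi field. The key observation is that if $\gamma_i$ converges to $\gamma_0$ with multiplicity $m$, then (for $i$ large, after passing to the tubular neighborhood $U$ of $\gamma_0$ and the normal exponential map with respect to $g_i$) the image $\gamma_i(S^1)$ lies in $U$ and, regarded as a subset of the normal bundle $N\gamma_0$, it is covered by $m$ graphs over $\gamma_0$ that each converge $C^2$ to the zero section. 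Concretely: parametrize $\gamma_0$ on the circle of length $2\pi$; the preimage $\gamma_i^{-1}(U)$ under the primitive parametrization decomposes, for large $i$, into $m$ arcs, on each of which $\gamma_i$ can be written as $\exp_{\gamma_0,g_i}(u_i^{(a)})$ for a section $u_i^{(a)}$, $a = 1,\dots,m$, with $u_i^{(a)} \to 0$ in $C^2$. This follows from the graphical $C^2$ convergence with multiplicity $m$ together with the fact that $\gamma_0$ is primitive (so the $m$ sheets are genuinely $m$ distinct local graphs and do not further collapse).

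Next I would observe that at least two of these $m$ graphical sections must be genuinely distinct: since $\gamma_i$ is \emph{primitive}, it is not an $m$-fold iterate of a shorter closed geodesic, so the $m$ sheets $u_i^{(1)},\dots,u_i^{(m)}$ cannot all coincide as sections of $N\gamma_0$ (if they did, $\gamma_i$ would traverse the same closed curve $m$ times, contradicting primitivity — or more carefully, would factor through an $m$-to-1 cover, again contradicting primitivity once $i$ is large enough that the sheets are graphs). Hence there are indices $a \neq b$ with $u_i^{(a)} \not\equiv u_i^{(b)}$. These two sections play exactly the role of $u_i^1$ and $u_i^2$ in the proof of Lemma \ref{lem:Jacobi_fields}: both are graphs of geodesics with respect to $g_i$ (being sub-arcs of the geodesic $\gamma_i$, reparametrized), both converge $C^2$ to $\gamma_0$, and the graphical parametrizations are distinct.

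Then the argument of Lemma \ref{lem:Jacobi_fields} applies verbatim: setting $\phi_i = (u_i^{(a)} - u_i^{(b)})/\|u_i^{(a)} - u_i^{(b)}\|_{C^0}$ and using that the mean curvature (here, geodesic curvature) of both graphs vanishes, we get $0 = J_{\gamma_0,g_i}(\phi_i) + O(\|u_i^{(a)} - u_i^{(b)}\|_{C^0})$, and Arzelà--Ascoli produces a limit $\phi_\infty$ with $\|\phi_\infty\|_{C^0} = 1$ solving $J_{\gamma_0,g_0}\phi_\infty = 0$, i.e. a nontrivial Jacobi field. When $\gamma_0$ is one-sided one passes to the orientation double cover of the normal bundle and runs the same construction there, as in Lemma \ref{lem:Jacobi_fields}. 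I expect the main obstacle (really the only subtlety) to be the first step: carefully justifying that a primitive geodesic converging with multiplicity $m$ decomposes into exactly $m$ graphical sheets over $\gamma_0$, and that primitivity forces at least two of them to differ rather than, say, the multiplicity being "hidden" in the parametrization; this requires being slightly careful about how the constant-speed primitive parametrization of $\gamma_i$ interacts with the tubular neighborhood decomposition, but is a routine consequence of $C^2$-graphical convergence with multiplicity.
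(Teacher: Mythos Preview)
Your approach is morally the same as the paper's, but there is a genuine gap in the execution. You correctly decompose $\gamma_i$ into $m$ graphical sheets $u_i^{(1)},\dots,u_i^{(m)}$ over $\gamma_0$ and invoke primitivity to ensure two of them differ. The problem is that these sheets are \emph{arcs}, not closed curves: as you yourself note, they are sub-arcs of $\gamma_i$. Concretely, if one labels the sheets so that $u_i^{(a)}(2\pi)=u_i^{(a+1)}(0)$ (the monodromy of the degree-$m$ cover $\gamma_i\to\gamma_0$ is an $m$-cycle, since $\gamma_i$ is connected and primitive), then the difference $u_i^{(a)}-u_i^{(b)}$ is a function on the interval $[0,2\pi]$ with no periodicity in general. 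Running the Lemma~\ref{lem:Jacobi_fields} argument on this pair produces a limit $\phi_\infty$ with $\|\phi_\infty\|_{C^0}=1$ solving the Jacobi ODE on $[0,2\pi]$ --- but a nontrivial solution of a second-order linear ODE on an interval always exists, so this conclusion is vacuous. The content of ``nontrivial Jacobi field'' is precisely periodicity, and that is exactly what your two-sheet comparison fails to deliver.

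The paper repairs this by working globally on the $m$-fold cover: write $\gamma_i$ as $\exp_{\gamma_0,g_i}(u_i(s))$ for a single function $u_i:[0,2\pi m]\to\R$ with $u_i(0)=u_i(2\pi m)$, and set $\phi_i(s)=\bigl(u_i(s)-u_i(s+2\pi)\bigr)/\|u_i(\cdot)-u_i(\cdot+2\pi)\|_{C^0}$. This is automatically $2\pi m$-periodic, primitivity guarantees the denominator is nonzero (else $u_i(s)=u_i(s+2\pi)$ for all $s$ and $\gamma_i$ would factor through an iterate), and the limit $\phi_\infty$ is a nontrivial $2\pi m$-periodic solution of the Jacobi equation, i.e.\ a Jacobi field on the $m$-fold cover of $\gamma_0$. (In fact $\sum_{k=0}^{m-1}\phi_i(s+2\pi k)\equiv 0$, so one cannot descend to $\gamma_0$ by averaging; the Jacobi field genuinely lives on the cover, which is all that is needed for the bumpiness applications.) Your identification of the ``only subtlety'' as the sheet decomposition is thus slightly off: the decomposition is routine, but the periodicity bookkeeping is where the argument has teeth, and the $m$-fold-cover parametrization with the $2\pi$-shift is the clean way to handle it.
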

\begin{proof}
Let $p: [0, 2 \pi m] \to \gamma_0$ be an $m$-fold parameterization of $\gamma_0$. By nature of (local) graphical convergence, we know that for $i$ sufficiently large, we can write
\begin{equation} \label{LocalGraphical}
\gamma_i(s) = \exp_{\gamma_0}(u_i(s)),
\end{equation}
locally in $s$, where $u_i$ is a section of $N \gamma_0$.
For $i$ sufficiently large, we have $||u_i||_{C^2} \leq \eps$ for $\eps > 0$ arbitrarily small.  \nl 
\indent We now want to extend $u_i$ the domain of $u_i$ from $[s - \delta, s + \delta]$ to a map on all of $[0, 2 \pi m]$. We achieve this by taking a finite covering of $p \cdot \gamma_0$ by intervals of the form $\{(s_k - \delta, s_k + \delta) \times (- \delta, \delta)_t\}$ where $\delta$ can be chosen uniformly given $\gamma_0$ fixed. \nl 
\indent First assume that $\gamma_0$ is 2-sided. Given a unique choice of smooth graphical representation $u_i \nu$ (where $\nu$ is a global unit vector) restricted to $(s_1 - \delta, s_1 + \delta)$, this determines a choice of $u_i$ in $(s_2 - \delta, s_2 + \delta)$ by looking at the overlap region of $(s_1 - \delta, s_1 + \delta) \cap (s_2 - \delta, s_2 + \delta)$ and then choosing the unique continuous extension into the rest of $(s_2 - \delta, s_2 + \delta)$. Repeat this inducetively for all $(s_k - \delta, s_k + \delta)$, noting that $\gamma_0$ being 2-sided will allow us to enforce $u_i(2 \pi m) = u_i(0)$. \nl 
\indent If $\gamma_0$ is 1-sided, then we pass to a double cover, $p \cdot \overline{\gamma}_0$ where $\overline{\gamma}_0$ is two-sided. Similarly, we can pass to $\overline{\gamma}_i$, the two sided double cover of $\gamma_i$, and repeat the proof. This means that \eqref{LocalGraphical} holds modulo switching $p: [0, 2\pi m] \to \gamma_0$ for a parameterization of the double cover, $\overline{p}:[ 0, 2\pi m] \to \overline{\gamma}_0$, and defining $\overline{u}_i$ with respect to $\overline{\gamma}_i$ converging to $\overline{\gamma}_0$. \nl 
%
\indent In the two-sided case, we note that since $p(s) = p(s + 2\pi)$ for all $s$ (mod $2\pi m$), we can replicate the proof of Lemma \ref{lem:Jacobi_fields} but with these explicit parametrizations. Consider 
\[
\phi_i(s) = \frac{u_i(s) - u_i(s + 2\pi)}{||u_i(s) - u_i(s + 2 \pi)||_{C^0}}.
\]
Note that the denominator is non-zero, else $u_i(s) = u_i(s + 2 \pi)$ for all $s$, a contradiction to $\gamma_i$ being primitive. Then, as in Lemma \ref{lem:Jacobi_fields}, we have 
\begin{equation*}
0 = \frac{H_{\gamma_i(s)} - H_{\gamma_i(s + 2\pi)}}{||u_i(s) - u_i(s + 2 \pi)||_{C^0}},
\end{equation*}
i.e.
\begin{align*}
0 &= J_{\gamma_0, g_i}(\phi_i(s)) + O(||u_i(s) - u_i(s + 2\pi||_{C^0}) \\
&= J_{\gamma_0, g_i}(\phi_i(s)) + O(||u_i||_{C^0})
\end{align*}
Applying Arzelà-Ascoli to $\{\phi_i\}$, we get a non-zero Jacobi field as before. The one-sided case follows similarly using $\overline{p}(s)$, $\overline{\gamma}_0$, and $\overline{u}_i$ instead.
\end{proof}
\begin{figure}[ht!]
    \centering
    \includegraphics[scale=0.4]{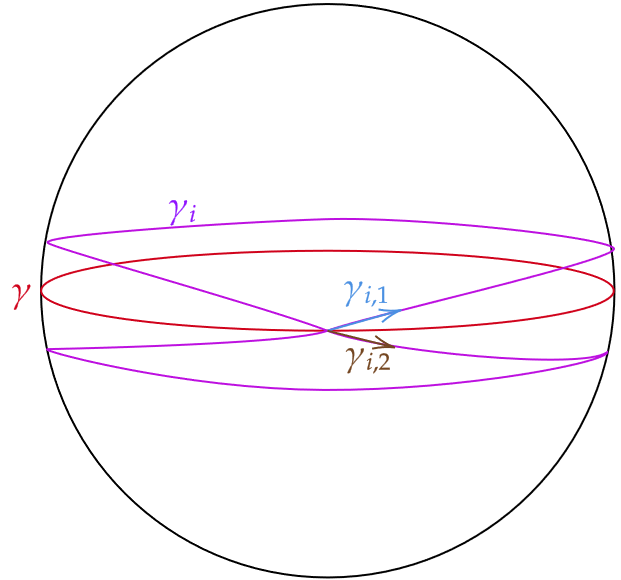}
    \caption{Corollary \ref{GeoMultCor} visualized}
    \label{fig:geomult}
\end{figure}

\subsection{First and second variation of length}
Let $\gamma : S^1 \to M$ be a $C^2$ immersed loop in $M$. Let $X$ be a vector field on $M$, and let $\Phi_t$ be the flow generated by $X$. We recall the formulas for the first and second variation of length.

\begin{proposition}\label{prop:variation_formulas}
    \[ \frac{d}{dt}\Big|_{t=0} \mathrm{length}(\Phi_t(\gamma)) = \int_{\gamma} \langle X, \kappa\rangle\ d\gamma, \]
    \[ \frac{d^2}{dt^2}\Big|_{t=0} \mathrm{length}(\Phi_t(\gamma)) = \int_{\gamma} (|\nabla_T^{\perp}X|^2 + \langle \nabla_XX, \kappa\rangle - |X^{\perp}|^2K_M)\ d\gamma. \]
    Moreover, if $\gamma$ is a geodesic, then
    \[ \frac{d}{dt}\Big|_{t=0} \mathrm{length}(\Phi_t(\gamma)) = 0,  \]
    \[ \frac{d^2}{dt^2}\Big|_{t=0} \mathrm{length}(\Phi_t(\gamma)) = \int_{\gamma} (|\nabla_T^{\perp}(X^{\perp})|^2 - |X^{\perp}|^2K_M)\ d\gamma. \]
\end{proposition}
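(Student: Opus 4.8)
\textbf{Plan for proving Proposition \ref{prop:variation_formulas}.}

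The plan is to compute the first and second derivatives of $t \mapsto \mathrm{length}(\Phi_t(\gamma))$ by differentiating under the integral sign, using the variation field $X$ along the family of curves $\gamma_t := \Phi_t \circ \gamma$. First I would set up notation: write $\gamma_t(\theta) = \Phi_t(\gamma(\theta))$, let $V = \partial_t \gamma_t$ (so $V = X$ along the curve), and work with $\ell(t) = \int_{S^1} |\partial_\theta \gamma_t|\, d\theta$. The standard computation gives $\frac{d}{dt}|\partial_\theta \gamma_t| = \frac{1}{|\partial_\theta\gamma_t|}\langle \nabla_{\partial_t}\partial_\theta \gamma_t, \partial_\theta\gamma_t\rangle$, and using the symmetry $\nabla_{\partial_t}\partial_\theta\gamma_t = \nabla_{\partial_\theta}\partial_t\gamma_t = \nabla_{\partial_\theta} V$ together with the (constant-speed) arc-length reparametrization $T = \partial_\theta\gamma_t / |\partial_\theta\gamma_t|$, one obtains $\frac{d}{dt}\ell(t) = \int_\gamma \langle \nabla_T V, T\rangle\, ds$. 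Integrating by parts along each loop in $S^1$ (no boundary terms) and recalling $\kappa = -\nabla_T T$ yields $\frac{d}{dt}\ell(t) = -\int_\gamma \langle V, \nabla_T T\rangle\, ds = \int_\gamma \langle X, \kappa\rangle\, d\gamma$, which is the first formula; specializing to a geodesic ($\kappa = 0$) gives the vanishing of the first variation.

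For the second variation I would differentiate the expression $\frac{d}{dt}\ell(t) = \int_{S^1} \frac{\langle \nabla_{\partial_\theta} V, \partial_\theta \gamma_t\rangle}{|\partial_\theta \gamma_t|}\, d\theta$ once more at $t=0$. Two subtleties enter: (i) one must commute $\nabla_{\partial_t}$ past $\nabla_{\partial_\theta}$, which introduces the curvature term $R(\partial_t\gamma_t, \partial_\theta\gamma_t)\partial_\theta\gamma_t$, contributing (after normalization and restriction to the normal component) the $-|X^\perp|^2 K_M$ term via the Gauss equation in dimension two; (ii) the acceleration of the flow, $\nabla_{\partial_t}\partial_t\gamma_t = \nabla_X X$, appears and pairs against $T$, contributing $\langle \nabla_X X, \kappa\rangle$ after an integration by parts against $\nabla_T T$. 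The remaining terms assemble into $|\nabla_T^\perp X|^2$: one collects $|\nabla_T V|^2$ from differentiating $\langle \nabla_{\partial_\theta} V, \partial_\theta\gamma_t\rangle/|\partial_\theta\gamma_t|$ and the $|\langle\nabla_T V, T\rangle|^2$ piece from differentiating the $1/|\partial_\theta\gamma_t|$ factor, and $|\nabla_T V|^2 - \langle \nabla_T V, T\rangle^2 = |\nabla_T^\perp V|^2$. This gives the general second variation formula. For the geodesic case, I would split $X = X^\top + X^\perp$, use $\kappa = 0$ to kill the $\langle \nabla_X X, \kappa\rangle$ term, and observe that the tangential part $X^\top$ generates only reparametrizations of the geodesic so contributes nothing — more carefully, $|\nabla_T^\perp X|^2 = |\nabla_T^\perp(X^\perp)|^2 + (\text{cross terms that integrate to zero along a geodesic})$, since $\nabla_T X^\top$ is tangential when $\gamma$ is a geodesic. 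Cleaning this up leaves $\int_\gamma (|\nabla_T^\perp(X^\perp)|^2 - |X^\perp|^2 K_M)\, d\gamma$.

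The main obstacle is the bookkeeping in step (ii): correctly tracking the $\nabla_X X$ term and the cross terms between tangential and normal components, and verifying that on a geodesic all contributions except $|\nabla_T^\perp(X^\perp)|^2 - |X^\perp|^2 K_M$ either vanish pointwise or integrate to zero. A clean way to organize this is to first prove the general formula for an arbitrary variation field $V$ along $\gamma$ (not necessarily coming from an ambient flow), then substitute $V = X$, $\nabla_{\partial_t}\partial_t\gamma_t|_{t=0} = \nabla_X X$; this isolates the flow-specific data into a single term and makes the geodesic specialization transparent. Everything else is a routine application of the symmetry lemma $\nabla_{\partial_t}\partial_\theta = \nabla_{\partial_\theta}\partial_t$, the definition of the curvature tensor, and integration by parts on $S^1$.
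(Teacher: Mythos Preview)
Your proposal is correct and follows essentially the same route as the paper's proof: both obtain the formulas by specializing the standard first and second variation computations for closed immersed submanifolds to curves in a surface, the only difference being that the paper quotes the general divergence-form formulas from a reference and specializes, whereas you rederive them by direct differentiation under the integral sign. One small sharpening for the geodesic case: you write that $|\nabla_T^\perp X|^2 = |\nabla_T^\perp(X^\perp)|^2$ up to cross terms that integrate to zero, but in fact this identity holds \emph{pointwise} on a geodesic, since writing $X^\top = \phi T$ gives $\nabla_T X^\top = T(\phi)T + \phi\nabla_T T = T(\phi)T$ when $\kappa = 0$, so $(\nabla_T X^\top)^\perp = 0$ and no cross terms appear at all.
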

\begin{proof}
    For a smoothly immersed closed $k$-dimensional submanifold $\Sigma$ in an $n$-dimensional Riemannian manifold $N$, the first and second variation formulas (see \cite{CMbook}) take the form (where $\{e_i\}_{i=1}^k$ is a local orthonormal frame for $\Sigma$) 
    \[ \delta \Sigma(X) = \int_{\Sigma} \mathrm{div}_{\Sigma}(X)\ d\Sigma, \]
    \begin{align*}
        \delta^2 \Sigma(X)
        & = \int_{\Sigma} \sum_{i=1}^k |D_{e_i}^{\perp}X|^2 + \mathrm{div}_{\Sigma}(\nabla_X X) - \sum_{i=1}^k R^N(e_i, X, X, e_i)\\
        & \hspace{1cm} + \left(\sum_{i=1}^k\langle \nabla_{e_i}X, e_i\rangle\right)^2 - \sum_{i,j=1}^k \langle \nabla_{e_i}X, e_j\rangle \langle \nabla_{e_j}X, e_i\rangle\ d\Sigma.
    \end{align*}
    
    First consider the first variation formula in our setting. Since $\gamma$ is closed, we have
    \[ \int_{\gamma} \mathrm{div}_{\gamma}(X^T)\ d\gamma = 0. \]
    Moreover, since $\kappa = -\nabla_TT$ is perpendicular to $\gamma$, we have
    \[ \mathrm{div}_{\gamma}(X^{\perp}) = \langle \nabla_T (X^{\perp}), T\rangle = -\langle X, \nabla_TT\rangle = \langle X, \kappa\rangle, \]
    so the first variation formula follows.

    Now consider the second variation formula. Write $X^T = \phi T$ along $\gamma$. We have
    \[ \nabla_T^{\perp}(X^T) = (T(\phi)T + \phi\nabla_TT)^{\perp} = -\phi\kappa, \]
    so when $\gamma$ is a geodesic we have
    \[ |\nabla_T^{\perp}X|^2 = |\nabla_T^{\perp}(X^{\perp})|^2. \]
    Since $\gamma$ is closed, we have
    \[ \int_{\gamma} \mathrm{div}_{\gamma}(\nabla_X^TX)\ d\gamma = 0. \]
    Moreover, since $\kappa = -\nabla_TT$ is perpendicular to $\gamma$, we have
    \[ \mathrm{div}(\nabla_X^{\perp}X) = \langle \nabla_T(\nabla_X^{\perp}X), T\rangle = - \langle \nabla_XX, \nabla_TT\rangle = \langle \nabla_XX, \kappa\rangle. \]
    For the curvature term, we observe that $R^M(T, X^T, X^T, T) = 0$ by the symmetries of the curvature tensor, and $R^M(T, X^{\perp}, X^{\perp}, T) = |X^{\perp}|^2K_M$. 
    Since $k=1$, the last two terms in the general second variation formula cancel, so the proposition follows.
\end{proof}

\begin{remark}
An essential consequence of Proposition \ref{prop:variation_formulas} is that the second variation of length for a geodesic only depends on the normal projection of $X$ along $\gamma$.  
\end{remark}

\section{Generic structure of self intersections }\label{GenericInterSec}
The goal of this section is to prove 
\begin{theorem} \label{GenSelfIntFull}
The set of metrics for which $\mathcal{G}_{\mathrm{prim}} = \mathcal{G}_{+}$ is $C^{k}$-generic for any $k \geq 3$ in the Baire sense.
\end{theorem}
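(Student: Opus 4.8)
The plan is to prove that for a $C^k$-generic (Baire) set of metrics, every $\Gamma \in \mathcal{G}_{\mathrm{prim}}$ actually lies in $\mathcal{G}_+$, i.e.\ every vertex is order $2$. As stated in the ``Main ideas'' section, the engine is an \emph{iterated conformal deformation} that lets us push down the orders of intersections one at a time. I would first reduce to a countable statement. Since each closed geodesic has length at least the injectivity radius and the $p$-widths control total length, for any fixed $L>0$ only finitely many ``combinatorial types'' of bad configurations can occur, and by \Cref{thm:compactness} the space $\mathcal{G}^L$ is sequentially compact under $C^\ell$ convergence; this lets me define, for each $L$ and each $d\ge 3$, the set $\mathcal{B}_{L,d}$ of metrics admitting some $\Gamma \in \mathcal{G}^L_{\mathrm{prim}}$ with a vertex of order $\ge d$, and reduce the theorem to showing each $\mathcal{B}_{L,d}$ is closed with empty interior (equivalently its complement is open and dense). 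Taking the countable intersection over $L \in \N$, $d\ge 3$ then gives the generic set.

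Closedness of $\mathcal{B}_{L,d}$ should follow from \Cref{lem:compact_var_met}: if $g_i \to g_0$ in $C^2$ and each $g_i$ has a bad configuration $\Gamma_i$, then (after passing to a subsequence and to primitive representatives) the $\gamma_i^j$ converge graphically to $g_0$-geodesics, and the order of a vertex can only jump up in the limit, so $g_0 \in \mathcal{B}_{L,d}$. The heart is \textbf{density of the complement}, and this is where the conformal deformation lemma (the ``main new ingredient'') enters: given $g$, a geodesic $\gamma$, and a $C^2$-small normal graph $\tilde\gamma$ over $\gamma$ which agrees with $\gamma$ outside a tiny geodesic ball $B_r(x)$, one produces a conformal deformation $\tilde g = e^{2\varphi} g$ with $\varphi$ supported in $B_r(x)$, $C^k$-small, so that $\tilde\gamma$ is a $\tilde g$-geodesic. (The relevant fact: geodesics of $e^{2\varphi}g$ in a surface are, after reparametrization, curves whose geodesic curvature for $g$ equals $-\langle \nabla\varphi, \nu\rangle$, i.e.\ $\kappa_g(\tilde\gamma) = \partial_\nu \varphi$; since any prescribed small curvature profile supported near $x$ can be matched by choosing $\varphi$ appropriately, one can bend any single geodesic strand locally while controlling all the others via the support condition.) To kill a bad vertex $v$ of order $d\ge 3$: localize near $v$ so the configuration is $d$ nearly-straight strands through $v$; pick one strand and, in a ball $B_r(x)$ centered at a point $x\neq v$ on that strand (chosen off all other strands, which is possible since vertices are isolated), perturb it to a nearby graph that no longer passes through $v$, or that only meets the others transversally in order-$2$ points. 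The conformal deformation realizing this perturbation is supported away from all the other geodesics in the (finitely many) configurations of length $\le L$ we are tracking, so it leaves them geodesic; iterating over the finitely many bad vertices and finitely many bad configurations, and shrinking at each stage, produces a metric $C^k$-close to $g$ lying in the complement of $\mathcal{B}_{L,d}$.

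Two technical points I would be careful about. First, in the density step I must perturb \emph{all} bad configurations of length $\le L$ simultaneously, not just one; since there are finitely many (up to the compactness of \Cref{thm:compactness}) and each deformation can be taken with arbitrarily small support, I can do them in disjoint balls or iterate, but I should check that bending one geodesic in a configuration doesn't create a \emph{new} high-order vertex elsewhere — this follows from choosing the perturbation $C^2$-small and generic (a small rotation/translation of one strand in a neighborhood of $v$ makes it meet the bundle of remaining strands only transversally and pairwise), and from the fact that outside the perturbation balls the geodesics are unchanged. Second, the conformal-deformation lemma must be set up so that it applies to an immersed geodesic \emph{with} self-intersections, and the support ball $B_r(x)$ must avoid $x$ being itself a self-intersection point of $\gamma$ — again fine since such points are isolated.

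\textbf{Expected main obstacle.} The genuinely delicate part is the conformal deformation lemma itself: showing that for \emph{any} $C^2$-small normal graph $\tilde\gamma$ over $\gamma$ agreeing with $\gamma$ outside $B_r(x)$, there is a $C^k$-small conformal factor supported in $B_r(x)$ making $\tilde\gamma$ a geodesic — and that the resulting metric change is controlled in $C^k$ uniformly as $r \to 0$. Matching the prescribed geodesic-curvature profile $\kappa_g(\tilde\gamma)$ (which is $O(r^{-1})$-type if the graph amplitude is fixed, so one needs amplitude $\ll r$) by solving $\partial_\nu\varphi = \kappa_g(\tilde\gamma)$ along $\tilde\gamma$ with $\varphi$ compactly supported in $B_r(x)$ and small in $C^k$, requires a careful extension/solvability argument (e.g.\ prescribe $\varphi$ on $\tilde\gamma$ and its normal derivative, then extend by a cutoff), and the quantitative bookkeeping relating graph amplitude, support radius, curvature, and the $C^k$-size of $\varphi$ is the crux. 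Everything else (the Baire/countable-intersection scaffolding, closedness via \Cref{lem:compact_var_met}, isolation of vertices) is routine given the results already in the excerpt.
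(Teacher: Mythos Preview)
Your proposal captures the spirit of the conformal deformation lemma, but there is a genuine gap in the Baire scaffolding which the paper resolves by a device you do not mention: the \emph{non-degeneracy} (bumpiness) condition must be baked into the open-dense sets.

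Concretely, both halves of your argument break without it. For closedness of $\mathcal{B}_{L,d}$, you assert that ``the order of a vertex can only jump up in the limit.'' This is false in general: if two distinct strands of $\Gamma_i$ collapse onto one another as $g_i \to g_0$ (which, by \Cref{lem:Jacobi_fields} and \Cref{GeoMultCor}, forces the limit geodesic or its double cover to carry a nontrivial Jacobi field), then the primitive representative of the limit configuration has strictly lower order, and $g_0$ need not lie in $\mathcal{B}_{L,d}$. For density, you write ``there are finitely many [bad configurations of length $\le L$] (up to the compactness of \Cref{thm:compactness}).'' But sequential compactness does not give finiteness; a metric with a degenerate closed geodesic can have a continuous family of closed geodesics of bounded length (the round sphere is the obvious example), so there is nothing finite to iterate over. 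Even if you start from a metric that happens to have only finitely many, your conformal deformation changes the metric on its support and could in principle spawn \emph{new} closed geodesics there; the paper rules this out in \Cref{containLem}, and the mechanism is again non-degeneracy: a new geodesic in the perturbed metric would converge to an old one as the perturbation shrinks, producing a Jacobi field and contradicting bumpiness.

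The paper's fix is to define the good set $\mathcal{M}^L$ to consist of metrics for which both (i) every $\Gamma \in \mathcal{G}^L_{\mathrm{prim}}$ has only order-$2$ vertices \emph{and} (ii) no $\Gamma \in \mathcal{G}^L$ admits a nontrivial Jacobi field. Condition (ii) makes $S(L,g)$ finite, so the iterated deformation in \Cref{DegFourProp} terminates and \Cref{containLem} controls the appearance of new geodesics; it also makes $\mathcal{M}^L$ open via the implicit function theorem (\Cref{openMetProp}), since nearby metrics have geodesics in one-to-one correspondence with those of $g$. Density is obtained by first passing to a nearby bumpy metric (these are generic) and then running the conformal deformation. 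Your discussion of the deformation itself, and the $C^k$ bookkeeping you flag as the ``expected main obstacle,'' match \Cref{confChangeLem} reasonably well; the missing idea in your outline is the role of bumpiness in making both the open and the dense halves of the Baire argument go through.
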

\noindent Note that Theorem \ref{GenSelfIntFull} immediately gives \Cref{GenSelfIntThm}. See Figure \ref{fig:genInt} for a visualization: 
%
\begin{figure}[H]
    \centering
    \includegraphics[scale=0.5]{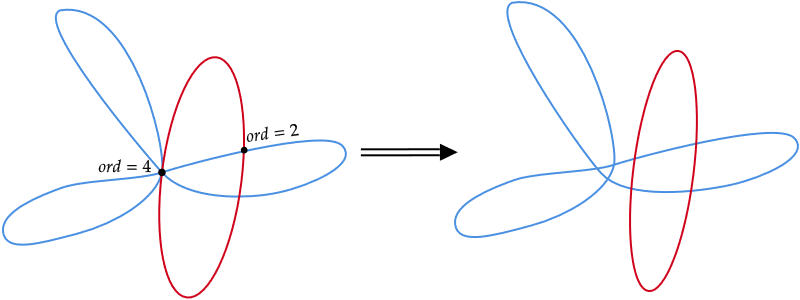}
    \caption{L: Non-generic metric, R: generic metric}
    \label{fig:genInt}
\end{figure}
%
%
We claim there exists a small perturbation $g_{\eps}$ of $g$ such that any $\Gamma \in \mathcal{G}^L_{\mathrm{prim}}(g_{\eps})$ 
has self-intersections of order at most $2$, and any $\Gamma \in \mathcal{G}^L(g_{\eps})$ does not admit any nontrivial Jacobi fields.

Recall a metric $g$ on $M$ is said to be \textit{bumpy} if no nontrivial closed immersed $g$-geodesic (or any of its finite covers) admits a nontrivial normal Jacobi field. The set of bumpy metrics on $M$ is $C^k$-generic in the space of all $C^k$ Riemannian metrics on $M$ (see \cite{abraham1970bumpy},\cite{white1991space}, \cite{white2017bumpy}) for any $k \geq 3$.

%
\begin{restatable}{prop}{DegFourProp} \label{DegFourProp}
Suppose $(M^2, g)$ is bumpy and $g$ is $C^k$. Then for any $\eps > 0$, $L>0$, there exists another metric, $g_{\eps, L}$ with
\[
||g_{\eps, L} - g||_{C^k} \leq \eps
\]
such that 
\begin{itemize}
    \item every $\Gamma \in \mathcal{G}^L_{\mathrm{prim}}(g_{\eps, L})$ has vertices with order at most 2,
    \item every $\Gamma \in \mathcal{G}^L(g_{\eps, L})$ does not admit any nontrivial Jacobi fields.
\end{itemize}
\end{restatable}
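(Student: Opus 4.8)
The plan is to reduce the statement, via compactness and nondegeneracy, to a question about a fixed finite family of geodesics, and then to perturb that family into general position by the localized conformal deformation sketched in the introduction (used either as the engine of a direct iteration, or as the infinitesimal‑surjectivity input of a Sard--Smale argument). \textbf{Reduction.} Since $g$ is bumpy and $C^k$, every primitive closed $g$‑geodesic of length $\leq L$ is nondegenerate, hence isolated, so by \Cref{thm:compactness} there are only finitely many, say $\gamma_1,\dots,\gamma_m$. Every $\Gamma\in\mathcal{G}^L_{\mathrm{prim}}(g)$ is a subcollection of $\{\gamma_1,\dots,\gamma_m\}$ and every $\Gamma\in\mathcal{G}^L(g)$ consists of iterates of them, and in either case $\mathrm{ord}_\Gamma(x)\leq\mathrm{ord}_{\{\gamma_1,\dots,\gamma_m\}}(x)$ for all $x$. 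By \Cref{lem:compact_var_met} and nondegeneracy there is $\epsilon_0>0$ so that for every $g'$ with $\|g'-g\|_{C^k}<\epsilon_0$ the primitive closed $g'$‑geodesics of length $\leq L$ are exactly the smooth nondegenerate continuations $\gamma_j(g')$, with no new short geodesics appearing (after, if necessary, enlarging $L$ slightly to a value avoiding the discrete set of lengths of short $g$‑geodesics, which only strengthens the conclusion). So it suffices to find $g_{\epsilon,L}$ within $C^k$‑distance $\min(\epsilon,\epsilon_0)$ of $g$ for which the $\gamma_j(g_{\epsilon,L})$ and all their iterates are nondegenerate and $\Sigma:=\bigcup_j\gamma_j(g_{\epsilon,L})(S^1)$ has no point of order $\geq 3$. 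Note finally that, by uniqueness of geodesics with a prescribed $1$‑jet together with primitivity, two distinct geodesic arcs through a common point are never tangent there; hence all intersections are transverse and ``$\mathrm{ord}\leq2$ everywhere'' means exactly ``no point lies on three arcs''.

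\textbf{Destroying order‑$\geq3$ vertices.} The bumpy‑metric genericity theorem (\cite{abraham1970bumpy,white1991space,white2017bumpy}) already makes the metrics with all length‑$\leq L$ geodesics and iterates nondegenerate $C^k$‑residual near $g$, so only the order condition needs work, and I would obtain it by iterating the following one‑step move. Suppose $g'$ has all length‑$\leq L$ geodesics nondegenerate and $\Sigma'$ has a point of maximal order $R\geq3$. Pick such a point $x$, let $\ell_1,\dots,\ell_R$ be the (pairwise transverse) geodesic arcs through it with $\ell_1$ a sub‑arc of $\gamma=\gamma_a(g')$, and take a small ball $B\ni x$ meeting $\Sigma'$ only in $\ell_1\cup\dots\cup\ell_R$ and in no other vertex. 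Choose a conformal factor $e^{2\phi}$ with $\phi$ supported in $B$ of the form $\chi\cdot\big(\prod_{i\geq2}Q_i^2\big)\cdot h$, where $\chi$ is a cutoff, $Q_i$ a local defining function for $\ell_i$, and $h$ a free profile; then $\phi$ vanishes to second order along $\ell_2,\dots,\ell_R$, so those arcs — hence the geodesics $\gamma_b$, $b\neq a$, entirely — stay geodesic for $e^{2\phi}g'$, while $h$ remains free enough to prescribe $\partial_\nu\phi$ along $\ell_1$. Equivalently: prescribe a $C^k$‑small normal graph $\tilde\gamma$ over $\gamma$ agreeing with $\gamma$ outside $B$ whose $B$‑portion is $\ell_1$ pushed slightly off $x$, and apply the conformal deformation construction to obtain the $\phi$ making $\tilde\gamma$ an $e^{2\phi}g'$‑geodesic. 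Its induced variation of $\gamma$ vanishes to high order at $x$ along $\ell_2,\dots,\ell_R$, so those arcs still pass through $x$ (tangent lines only slightly rotated), while $\tilde\gamma$ now misses $x$ and crosses each $\ell_i$, $i\geq2$, transversally at a nearby point. For generic $h$ these $R-1$ new crossings are distinct and miss every pre‑existing vertex, so the order‑$R$ vertex at $x$ is replaced by one vertex of order $R-1$ and $R-1$ vertices of order $2$, all of order $<R$; every other vertex of $\Sigma'$ moves only slightly and keeps its order (transversality being open), and for generic $h$ no new triple point appears. Finally $\gamma$ is replaced by a $C^k$‑small perturbation, still primitive and — since the spectrum of its Jacobi operator moves continuously and had avoided $0$ — still nondegenerate, as are its iterates, while the $\gamma_b$ and their Jacobi operators are untouched; set $g''=e^{2\phi}g'$.

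\textbf{Conclusion.} Iterating this move from $g$ with geometrically summable $\delta$'s, the lexicographic quantity $\big(\max_x\mathrm{ord}(x),\ \#\{x:\mathrm{ord}(x)=\max_y\mathrm{ord}(y)\}\big)$ strictly decreases, so after finitely many steps $\max_x\mathrm{ord}(x)\leq2$ while the total $C^k$‑perturbation stays below $\epsilon$; nondegeneracy of all short geodesics and their iterates has been preserved throughout, so the resulting metric is the desired $g_{\epsilon,L}$. The same reasoning, intersected over $L\in\N$, gives \Cref{GenSelfIntFull}.

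\textbf{Main obstacle.} The crux is the one‑step move: both the conformal bending construction itself — producing a $C^k$‑small conformal factor supported in a small ball, second‑order‑vanishing along prescribed arcs, that turns a prescribed graph into a geodesic, which rests on the conformal transformation law of geodesic curvature and a careful choice of profile — and the genericity bookkeeping, namely that the ``bad'' profiles $h$ (those spawning a new triple point or meeting an old vertex) form a countable union of positive‑codimension sets and so are avoidable. A subtler point, dispatched in the Reduction step by \Cref{lem:compact_var_met} and nondegeneracy, is that no new closed geodesic of length $\leq L$ materializes under the perturbations. A more abstract alternative would replace the iteration by a single Sard--Smale argument — the evaluation map sending a triple of length‑$\leq L$ geodesics together with three circle parameters into $M^2\times M^2$ is, for generic metrics near $g$, transverse to (in fact disjoint from) the small diagonal — with the same conformal deformation supplying the required infinitesimal surjectivity.
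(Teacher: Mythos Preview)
Your overall strategy matches the paper's closely: reduce via bumpiness and compactness to finitely many primitive geodesics, then iteratively split each high-order vertex into order-$2$ vertices by a localized conformal deformation that pushes one arc off the vertex while keeping the others geodesic. The reduction, the ``no new short geodesics appear'' argument (the paper's \Cref{containLem}), and the preservation of nondegeneracy are all handled in essentially the same way.

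The main difference is in the one-step conformal construction. Your proposal multiplies by $\prod_{i\geq 2} Q_i^2$ to force $\phi$ and $\nabla\phi$ to vanish along $\ell_2,\dots,\ell_R$. This does keep those arcs geodesic, but it also forces $\partial_\nu\phi=0$ at every point where the first arc (old or new) meets some $\ell_j$ --- in particular at $x$ --- so you cannot freely ``prescribe $\partial_\nu\phi$ along $\ell_1$'' as you claim, nor can you turn an arbitrary graph $\tilde\gamma$ into an $e^{2\phi}g'$-geodesic unless $k_{\tilde\gamma}$ already vanishes at those crossings. This is not fatal (your first variant, where you choose $\phi$ and let $\tilde\gamma$ be the nondegenerate continuation, still works because the inverse Jacobi operator is nonlocal), but the argument is more delicate than you indicate. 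The paper sidesteps the issue entirely: inside an inner ball $B_{1/2}$ containing the vertex it takes $\gamma_1^*$ to be a \emph{genuine $g$-geodesic} (the segment $\sigma_{p_t,q}$ from a slightly displaced point $p_t$ back to the original exit point $q$), so $k_{\gamma_1^*}\equiv 0$ there and all crossings with the other arcs occur where no conformal change is needed; the conformal factor is then supported only in the annulus $B_1\setminus B_{1/2}$, inside a thin tube around $\gamma_1^*$ that misses $\ell_2,\dots,\ell_R$ altogether. This makes the construction, the $C^k$-estimates, and the vertex bookkeeping (each new crossing is a single transverse intersection of two $g$-geodesics in a ball of radius below the injectivity radius) all direct, with no genericity-in-$h$ argument required. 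One minor correction: with your $\phi$, the second derivatives of $\phi$ need not vanish along $\ell_j$, so the Gauss curvature and hence the Jacobi operators of the $\gamma_b$ are not literally ``untouched'' --- they are only perturbed, which is still enough for nondegeneracy but not what you wrote.
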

%
%

\begin{proof} 
Let $S(L, g)$ denote the set of connected primitive closed immersed geodesics of length at most $L$. For any bumpy metric $g$, $S(L, g)$ is a finite set by Lemma \ref{lem:Jacobi_fields}. The idea is to perturb each element of $S(L, g)$ in a way that creates intersections of order 2, and also avoids creating higher order intersections for any element of $S(2L, g) \setminus S(L, g)$. 

Without loss of generality, we can assume that $g$ is smooth, since otherwise we can choose a smooth bumpy metric $g^*$ such that $||g^* - g||_{C^k} < \eps/2$. Then, if we can find a perturbation, $g_{\eps, L}$, of $g^*$ such that $||g_{\eps, L} - g^*||_{C^k} < \eps/2$ and $g_{\eps,L}$ has the desired properties, we'll have proved the proposition.

Define 
\[
\Gamma = \{\gamma \mid \gamma \in S(2L, g)\},
\]
which is in $\mathcal{G}^K(g)$ for some $K$ large but finite depending on $g$ and $L$. Let $\{v_{i}\}$ denote the set of vertices in $\Gamma$ with order $\geq 3$ self-intersection and $\{w_i\}$ the set of vertices with order $2$ self-intersection. For each $v_{i}$, we will demonstrate how to decrease the order of the vertex by introducing vertices with order $2$ self-intersection. The construction then proceeds inductively to decrease the order of all vertices in $\{v_i\}$, at the cost of creating more vertices with order $2$. Suppose $v_{i}$ has an order $d\geq 3$ self-intersection. Let $r_{inj}$ denote the injectivity radius of $M$. Choose a small geodesic neighborhood $U_{i} \ni v_{i}$ with diameter less than $r_{inj}$ and such that $U_i \cap \{w_j\} = \emptyset$, along with a chart map 
\[
\varphi_{i}: B_1(0) \to U_{i}
\]
such that $||\varphi_{i}^*(g) - g_{euc}||_{C^{k+2}} \leq \delta$. Note that for any $\delta > 0$, such a $U_{i}$ exists by choosing it sufficiently small. Since $v_{i}$ has order $d$, $\Gamma$ in $U_{i}$ consists of an intersection of $d$ geodesic segments, $\{\gamma_{j}\}_{j = 1}^d$. 
\begin{restatable}{lemm}{confChangeLem} \label{confChangeLem}
There exists a conformal change of metric, $g_{\eps, i}$ such that 
\begin{itemize}
\item $g_{\eps, i} = g$ on $M \backslash U_{i}$, $||g_{\eps,i} - g||_{C^k} \to 0$ on $U_{i}$ as $\varepsilon\to 0$;
\item the curves $\{\gamma_j\}_{j = 2}^d$ are geodesics with respect to $g_{\eps, i}$ on all of $M$;
\item there exists a curve $\gamma_1^*$ such that
\begin{itemize}
    \item $\gamma_1^*$ is a geodesic with respect to. $g_{\eps,i}$,
    \item $\gamma_1^* = \gamma_1$ outside of $U_{i}$,
    \item $v_{i} \not \in \gamma_1^*$,
    \item $\gamma_1^*$ is graphical over $\gamma_1$, converging to $\gamma_1$ as $\eps \to 0$.
\end{itemize}
\end{itemize}
\end{restatable}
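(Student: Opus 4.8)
\textbf{Proof plan for Lemma \ref{confChangeLem}.}

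The plan is to build the conformal factor as an iterated limit: first reduce to the case $d=2$ (i.e.\ remove all the excess branches one at a time from the intersection), and at each step produce a compactly supported conformal deformation that bends one branch off the vertex while leaving the others geodesic. The key analytic tool is the standard fact that under a conformal change $\tilde g = e^{2\psi} g$ in dimension $2$, a unit-speed curve $\gamma$ is a $\tilde g$-geodesic if and only if its geodesic curvature transforms as $\tilde\kappa = e^{-\psi}(\kappa - \partial_\nu \psi)$, where $\nu$ is the unit normal; so to straighten a prescribed curve $\tilde\gamma_1$ it suffices to prescribe $\partial_\nu\psi$ along $\tilde\gamma_1$ to equal the $g$-geodesic curvature of $\tilde\gamma_1$, and to keep the curves $\gamma_2,\dots,\gamma_d$ (which are already $g$-geodesics) geodesic it suffices to impose $\partial_\nu\psi = 0$ along each of them. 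Since $U_i$ is almost Euclidean (the chart $\varphi_i$ makes $\|\varphi_i^*g - g_{euc}\|_{C^{k+2}}\le\delta$), I would do the construction in the Euclidean chart and transplant back.

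First I would fix the target curve $\gamma_1^*$: in the chart, take $\gamma_1$ (a near-straight arc through the origin $v_i$) and push it to a parallel arc $\gamma_1^*$ at signed distance $t$ (with $|t|$ small, to be sent to $0$ along with $\eps$), interpolated by a cutoff so that $\gamma_1^* = \gamma_1$ near $\partial U_i$ and $\gamma_1^*$ is the translated arc near the origin; choosing $|t|$ small enough relative to the angles between the branches guarantees $\gamma_1^*$ still meets each $\gamma_j$, $j\ge 2$, but now misses the common point $v_i$, so the only vertices created in $U_i$ are order-$2$ (each $\gamma_1^*\cap\gamma_j$), which is exactly what we want. Next I would construct $\psi$ supported in $U_i$ with: (i) $\psi$ and all derivatives vanishing near $\partial U_i$; (ii) $\partial_\nu\psi \equiv 0$ along each $\gamma_j$, $2\le j\le d$, together with $\psi|_{\gamma_j}$ chosen so that $\gamma_j$ remains a unit-speed geodesic after rescaling parametrization — in fact it is cleanest to also impose $\psi|_{\gamma_j}\equiv 0$ so that $\gamma_j$ is unchanged as an unparametrized geodesic; and (iii) along $\gamma_1^*$, $\partial_\nu\psi$ equal to the (small, $O(t)$) geodesic curvature of $\gamma_1^*$ in the chart metric and $\psi|_{\gamma_1^*}$ free (say $0$). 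Since the arcs $\gamma_1^*,\gamma_2,\dots,\gamma_d$ are, for $|t|$ small, mutually transverse $C^{k+1}$ arcs meeting pairwise in finitely many points and with uniformly bounded geometry, their union is (locally near each intersection) a pair of transverse curves; prescribing a function and its first normal derivative along such a collection with compatible values at the intersection points is a Whitney-type jet extension problem, solvable with a $C^{k+1}$ (indeed $C^\infty$) function whose $C^k$ norm is controlled by the prescribed data, hence $O(t) + O(\delta)$. Then $\tilde g := \varphi_{i*}(e^{2\psi}\varphi_i^*g)$, extended by $g$ outside $U_i$, is the desired $g_{\eps,i}$: by construction $\gamma_2,\dots,\gamma_d$ are still geodesics (their curvature is unchanged), $\gamma_1^*$ is a geodesic (its curvature is killed), $\gamma_1^* = \gamma_1$ outside $U_i$, $v_i\notin\gamma_1^*$, and $\gamma_1^*$ is a normal graph over $\gamma_1$ of size $O(t)\to 0$. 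Letting $t\to 0$ as $\eps\to 0$ (and $\delta\to 0$ as $U_i$ shrinks) gives $\|g_{\eps,i} - g\|_{C^k}\to 0$.

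The main obstacle, and the step I would spend the most care on, is the compatibility of the jet data at the intersection points and the quantitative control of the resulting $\psi$: at a point where $\gamma_1^*$ crosses some $\gamma_j$, I am prescribing $\psi$ and its full first-order jet from two different arcs simultaneously, so the prescribed values (function value, and the two directional derivatives along the two transverse tangent lines, which together with the value determine the $1$-jet) must agree. Imposing $\psi\equiv 0$ on all the $\gamma_j$ with $j\ge 2$ and on $\gamma_1^*$ forces the value to be $0$ at every intersection; the tangential derivative of $\psi$ along each arc is then $0$ at the intersection, and the two normal-derivative prescriptions ($0$ along $\gamma_j$, $\kappa_{\gamma_1^*}$ along $\gamma_1^*$) are carried on transverse directions, so the $1$-jets are mutually consistent at each crossing, and transversality plus the uniform lower bound on crossing angles gives the needed estimate on the extension. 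One must also double-check that $\gamma_1^*$ does not develop a \emph{new} intersection with $\gamma_1$ itself (it won't, since it is a small graph interpolating between $\gamma_1$ and a parallel translate, and $\gamma_1$ is embedded inside $U_i$ after shrinking) and that no spurious tangential intersections of positive order appear — again ruled out by transversality for $|t|$ small. Finally, one records that the conformal change does not affect the \emph{set} of geodesics outside $U_i$, nor whether a closed curve through $U_i$ is geodesic unless it passes through the modified region, which is precisely the content we use when iterating over the vertices $\{v_i\}$ and over $S(2L,g)$ in the proof of \Cref{DegFourProp}.
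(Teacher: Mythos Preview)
Your overall strategy---prescribe a perturbed curve $\gamma_1^*$ and then conformally adjust so that $\gamma_1^*$ becomes geodesic while $\gamma_2,\dots,\gamma_d$ remain geodesic---matches the paper's, but your compatibility claim at the crossings is incorrect, and this is precisely where the paper's argument does real work.

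You impose $\psi\equiv 0$ on each $\gamma_j$ ($j\ge 2$) and on $\gamma_1^*$. At a crossing $x$ of $\gamma_1^*$ with some $\gamma_j$, this forces the tangential derivative of $\psi$ to vanish along \emph{both} arcs at $x$; since $T_j$ and $T_{1}^*$ span $T_xM$, the full gradient $\nabla\psi(x)$ must vanish. But you are simultaneously prescribing $\partial_{\nu_1^*}\psi(x)=\kappa_{\gamma_1^*}(x)$, and your ``translated arc'' in normal coordinates is a straight line not through the origin, hence is \emph{not} a $g$-geodesic unless the metric is flat; so $\kappa_{\gamma_1^*}(x)\neq 0$ in general. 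You are therefore over-determining a two-dimensional gradient with four linear conditions, and the $1$-jets from the two arcs do not agree. Transversality of the normal directions does not help: it is the vanishing of both tangential derivatives that already pins down $\nabla\psi(x)=0$.

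The paper sidesteps this entirely. Instead of a translated segment near $v_i$, it uses the actual $g$-geodesic $\sigma_{p_t,q}$ from a small normal push $p_t$ of a point $p\in\gamma_1\cap\partial B_{1/2}$ to $q\in\gamma_1\cap\partial B_{1/2}$. The replacement curve $\gamma_1^*$ then has \emph{zero} $g$-curvature throughout the inner ball $B_{1/2}$, where all its crossings with $\gamma_2,\dots,\gamma_d$ occur. The non-geodesic interpolation pieces $\rho$ and $\tau$ live in the annulus $B_1\setminus B_{1/2}$, where $\gamma_1^*$ is at uniformly positive distance $d_0$ from each $\gamma_j$ (the branches are already separated away from the vertex). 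The conformal factor $f(s,\tilde t)=-\chi_{d_0/2}(\tilde t)\,\tilde t\,k_{\gamma_1^*}(s)\,\psi(s)$ is then supported in a $d_0$-tube about $\gamma_1^*$ inside the annulus, hence vanishes identically (with all derivatives) on every $\gamma_j$, $j\ge 2$---no jet-extension or compatibility problem ever arises.

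Your approach can be repaired either by (a) replacing the translated arc with a genuine $g$-geodesic near the vertex, which is exactly the paper's move, or by (b) dropping the Dirichlet conditions $\psi|_{\gamma_j}=\psi|_{\gamma_1^*}=0$ and prescribing only the normal derivatives; option (b) is consistent at each crossing (two conditions on a two-dimensional gradient, and the conditions at $v_i$ are all homogeneous), but carrying out the extension and the $C^k$ estimate then requires more care than you have indicated.
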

\begin{proof} Consider a vertex, $v_i$, with order greater than or equal to $2$ and let $U_i$ be a small neighborhood about $v_i$ given by the exponential map with radius less than the injectivity radius of $M$. We now take a coordinate chart at $v_i$. Identify a subset of $U_{i}$ with $B_{3r_i/2}(0)\subset \R^2$ via the exponential map based at $v_i$ and the curves $\{\gamma_j\}$ with their images under $\varphi_{i}$, which will be a union of straight lines for a sufficiently small choice of $U_i$. Rescale the chart map $B_{3r_i/2}(0) \to B_{3/2}(0)$ via $\tilde{\varphi}_i = r_i^{-1} \varphi_i$ along with the metric $g_i = r_i^{-2} \tilde{\varphi}_i^*(g)$ so that for $r_i$ sufficiently small
\[
||g_i - g_{euc}||_{C^{k+2}} \leq \delta
\]
We can do this for any $\delta > 0$ small. Now consider
\[
\{P,Q\} := \gamma_1 \cap \partial B_1(0), \qquad \{p,q\} = \gamma_1 \cap \partial B_{1/2}(0)
\]
such that $\text{dist}(p,P) < \text{dist}(p,Q)$ (see Figure \ref{fig:deg4}). 
\begin{figure}[ht!]
    \centering
    \includegraphics[scale=0.4]{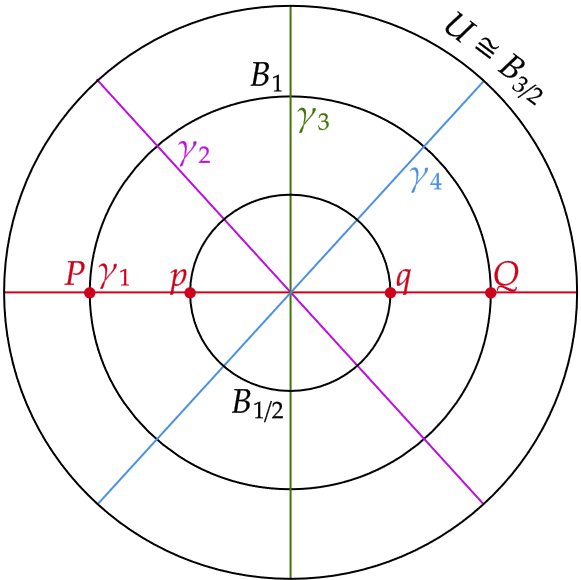}
    \caption{Local chart near self-intersection}
    \label{fig:deg4}
\end{figure}
%
%
Without loss of generality, one can rotate the chart map on $B_{3/2}(0) = \{x^2 + y^2 < 9/4\}$ so that $\gamma_{1}'(p) = \alpha \p_x$ with $\alpha > 0$. Recall that $\varphi_{i}(0) = v_{i}$. Consider the families of points $p_{t} = p + t \p_y$ 
for $t \in [-\eta, \eta]$ with $\eta$ sufficiently small that $p_{t} \in U_{i}$. Consider the uniquely defined geodesic 
\begin{equation} \label{sigmatpath}
\sigma_{p_t, q}: p_t \rightarrow q
\end{equation}
(see Figure \ref{fig:modcurve}). Since $p_t \neq p$ for any $t \neq 0$, we note that $\sigma_{p_t, q} \cap \gamma_1 = \{q\}$, as the existence of any other point of intersection would contradict the fact that $U_{i}$ was chosen with diameter less than the injectivity radius. In particular, $v_{i} \not \in \sigma_{p_t, q}$. Consider the restriction of $\gamma$ from $\gamma(s_0) = P \to p = \gamma(s_1)$ and construct a smooth curve (not necessarily geodesic) $\rho: P \to p_t$ graphical over $\gamma$, i.e.
\begin{align*}
\rho(s) &:= \exp_{\gamma, g}(u_t(s) \nu(s)) \\
u_t(s_0) &:= 0 \\
u_t(s_1) &= t
\end{align*}
so that $\rho(s_0) = P$ and $\rho(s_1) = p_t$. We further enforce that $\rho$ agrees with $\gamma_1$ to second order at $P$, i.e. 
\[
u_t'(s_0) = u_t''(s_0) = 0
\]
We can similarly enforce that $\rho$ agrees with $\sigma_t$ to second order at $p_t$: let $\sigma_{p_t,q}: [s_1, s_2] \to M$ be represented graphically over $\gamma_1$ as 
\begin{align*}
\sigma_{p_t,q}(s) &= \exp_{\gamma_1, g}(v_t(s) \nu(s)) \\
v_t(s_1) &= t \\
v_t(s_2) &= 0 \\
\nabla_{\dot{\sigma}} \dot{\sigma} &= 0
\end{align*}
where $\dot{\sigma} := \frac{d}{ds} \sigma_{p_t, q}(s)$. Then we enforce
\begin{align*}
u_t'(s_1) &= v_t'(s_1) \\
u_t''(s_1) &= v_t''(s_1)
\end{align*}
Finally, we construct a smooth curve $\tau: q \to Q$ which agrees with $\sigma_{p_t, q}$ to second order at $q = \tau(a_0)$ and with the original curve $\gamma_1$ to second order at $Q = \tau(a_1)$, i.e.
\begin{align*}
\tau(a) &:= \exp_{\gamma, g}(b_t(a) \nu(a)) \\
b_t(a_0) &:= 0 \\
b_t'(a_0) &= \sigma_{p_t,q}'(s_2) \\
b_t''(a_0) &= \sigma_{p_t,q}''(s_2) \\
b_t(a_1) &= 0 \\
b_t'(a_1) &= 0 \\
b_t''(a_1) &= 0 
\end{align*}
Note that we can achieve all of these conditions while also enforcing 
\[
||u_t||_{C^{k+2}(s), g_i} = o(1), \qquad ||b_t||_{C^{k+2}(s), g_i} = o(1)
\]
as $t \to 0$. As an example, let $\chi_r(s)$ denote a bump function which is $1$ on $[-r,r]$ and is $0$ outside of $[-2r, 2r]$. Then define 
\begin{align*}
u_t&: [s_0, s_1] \to \R \\
\alpha_0 &= s_1 - s_0 \\
u_t(s) &= \chi_{\alpha_0/4}(s - s_1)[t + (s - s_1) v_t'(s_1) + (s-s_1)^2 v_t''(s_1)] 
\end{align*}
which is clearly smooth and satisfies 
\[
||u_t||_{C^{k+2}, g_i} \leq C(\alpha_0, q) \cdot [|t| + |v_t'(s_1)| + |v_t''(s_1)|]
\]
%
Note that $|t|, |v_t'(s_1)|, |v_t''(s_1)|$ can be made arbitrarily small (independent of $\alpha_0$) by sending $t \to 0$ since the geodesics starting at $p_t$ and ending at $q$ vary smoothly in $t$ as $t \to 0^+$. Thus, with $r_i, \delta, \alpha_0$ fixed, we see that
\[
||u_t||_{C^{k+2}} = o_t(1)
\]
A similar argument works for $b_t$. 
%
%
%
Now consider the conglomerate curve (see Figure \ref{fig:modcurve})
\[
\gamma_1^* = \begin{cases}
	\gamma_1 & M^2 \backslash U_{i,j} \\
	\rho & p \rightarrow p_t \\
	\sigma_{p_t, q} & p_t \rightarrow q \\
    \tau & q \rightarrow Q
	\end{cases}
\]
Since $\{\gamma_j \cap U_{i}\}_{j = 2}^d$  all intersect at $v_{i}$, the fact that $v_{i} \not \in \gamma_1^*$, means that $(\gamma_1^* \cap U_{i})$ intersects each $(\gamma_j \cap U_{i})$ in at most one distinct point. This follows because by choosing $t$ small, we can guarantee that 
\[
j \geq 2 \implies \gamma_1^* \cap \gamma_j \cap (U \backslash B_{1/2}(0)) = \emptyset
\]
since this is true for $\gamma_1$ and $\gamma^*$ is an $o_t(1)$ graphical perturbation of $\gamma_1$. We also know that 
\[
j \geq 2 \implies |\gamma_1^* \cap \gamma_j \cap \overline{B}_{1/2}(0)| = 1
\]
since $\gamma_1^*$ and $\gamma_j$ are geodesics with respect to $g$ in $B_{1/2}(0)$ and more than $1$ point of intersection would contradict the choice of $r_i$ being less than the injectivity radius. 

\begin{figure}[ht!]
    \centering
    \includegraphics[scale=0.4]{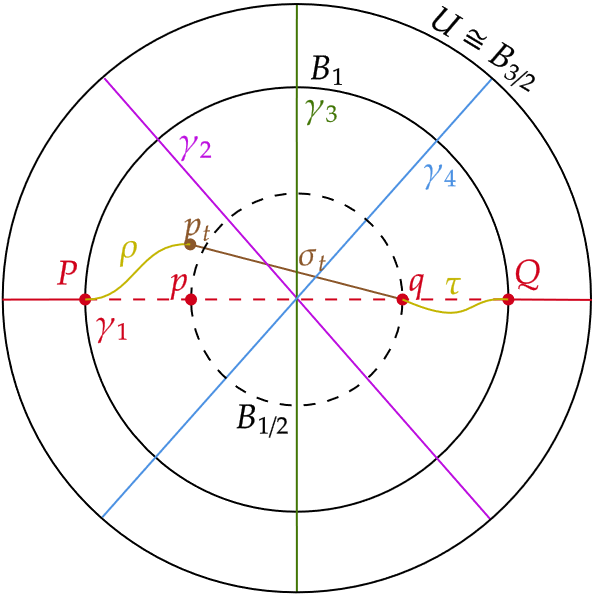}
    \caption{Modified curve with $\rho_i, \sigma_t$ inserted}
    \label{fig:modcurve}
\end{figure}
%

%
%
Now consider a conformal change of metric supported on $B_1(0) \backslash B_{1/2}(0)$ which makes $\gamma_1^*$ a geodesic. Let $k_{\gamma_1^*}$ denote the (scalar) geodesic curvature of $\gamma_1$, and recall that the geodesic curvature transforms as follows under a conformal deformation:
\begin{align*}
g_i & \rightarrow e^{2f} g_i = g_{f,i}\\
\implies k_{\gamma_1^*} & \rightarrow k_{\gamma_1^*, f} = e^{-f}(k_{\gamma_1^*} + \p_n f)
\end{align*}
Note that $k_{\gamma_1^*}$ vanishes at $P$ and $p_{t}$ since we've matched $\gamma_1^*$ with $\gamma_1$ at $P$ and $\sigma_{p_t, q}$ at $p_t$ up to second order. Thus we can choose $f$ which is supported in $B_1(0) \backslash B_{1/2}(0)$. Let $(s,\tilde{t})$ denote Fermi coordinates along $\gamma_1^*$ with $\tilde{t}$ corresponding to the normal coordinate. Let 
\[
d_0 := \min \left[ \min_{i \geq 2} \text{dist}\left( \rho, \gamma_i\right), \min_{i \geq 2} \text{dist} \left( \tau, \gamma_i \right) \right]
\]
By choosing $t$ sufficiently small in equation \eqref{sigmatpath} so that $\rho$ and $\tau$ are sufficiently close to $\gamma_1$, we see that $d_0 > 0$ and is bounded away from $0$ independent of $r_i$ and $t$. We define 
\[
f(s,\tilde{t}):=  - \chi_{d_0/2}(\tilde{t}) \cdot \tilde{t} \cdot  k_{\gamma_1^*}(s) \cdot \psi(s)
\]
Here, $\chi_{d_0/2}(\tilde{t})$ is a positive bump function taking value $1$ for $|\tilde{t}| \leq d_0/2$, which vanishes for $|\tilde{t}| \geq d_0$; $\psi(s)$ is a bump function which is equal to $1$ all along $P \to Q$ and vanishes smoothly outside the neighborhood. Because $k_{\gamma_1^*}(s) = 0$ for all points which are not contained in $B_1(0) \backslash B_{1/2}(0)$, we see that $f$ is supported in $B_1(0) \backslash B_{1/2}(0)$ as well. Thus, $g_{f,i} = g_i$ on $M \backslash U_i$ and $\gamma_1^*$ is a geodesic with respect to $g_{f,i}$. Finally, since 
\[
||u||_{C^{k+2}, g_i} = o(1)
\]
as $t \to 0$, $k_{\gamma_1^*}(s)$ and its higher order derivatives are also $o(1)$, so that $f = o(1) \cdot O(d_0^{-k-2})$ in $C^{k}$ as $t \to 0$. Note that $d_0 = O(r_i)$, but given any choice of $d_0$, we can always choose $t$ smaller so that $||f||_{C^{k}}$ is as small as needed.
Defining $g_f = e^{2f} g$, we have
\[
||g_{f} - g||_{C^{k}} = o(1)
\]
on $U_i$.
Letting $g_{\eps,i} = g_f$, we have constructed a metric satisfying the required properties, and verified the lemma. \end{proof}

If we now consider the corresponding union of geodesic segments $\Gamma^{1,*} = \{\gamma_1^*\} \cup \{\gamma_j\}_{j=2}^d$, then we see that 
$\ord_{\Gamma^{1,*}}(v_i) = d-1$ and $\ord_{\Gamma^{1,*}}(x) \leq 2$ for any $x \in U_i \setminus \{v_i\}$.

To proceed inductively and decrease the order further, we note that in the construction of Lemma \ref{confChangeLem}, $g_{\eps,i} = g$ on $B_{1/2}(0)$, which corresponds to some open neighborhood of $v_{i}$. Thus, to decrease the order further, choose a new open neighborhood $U_{i}^* \ni v_{i}$ such that $U_{i}^* \subseteq \varphi_{i}(B_{1/2}(0))$ \textit{and} such that $U_{i}^* \cap \gamma_1^* = \emptyset$. Now apply Lemma \ref{confChangeLem} again with $U_{i}^*$ to decrease the degree of $v_{i}$ from $d-1$ to $d-2$, etc. (see Figure \ref{fig:degreduc}) 
\begin{figure}[ht!]
    \centering
    \includegraphics[scale=0.3]{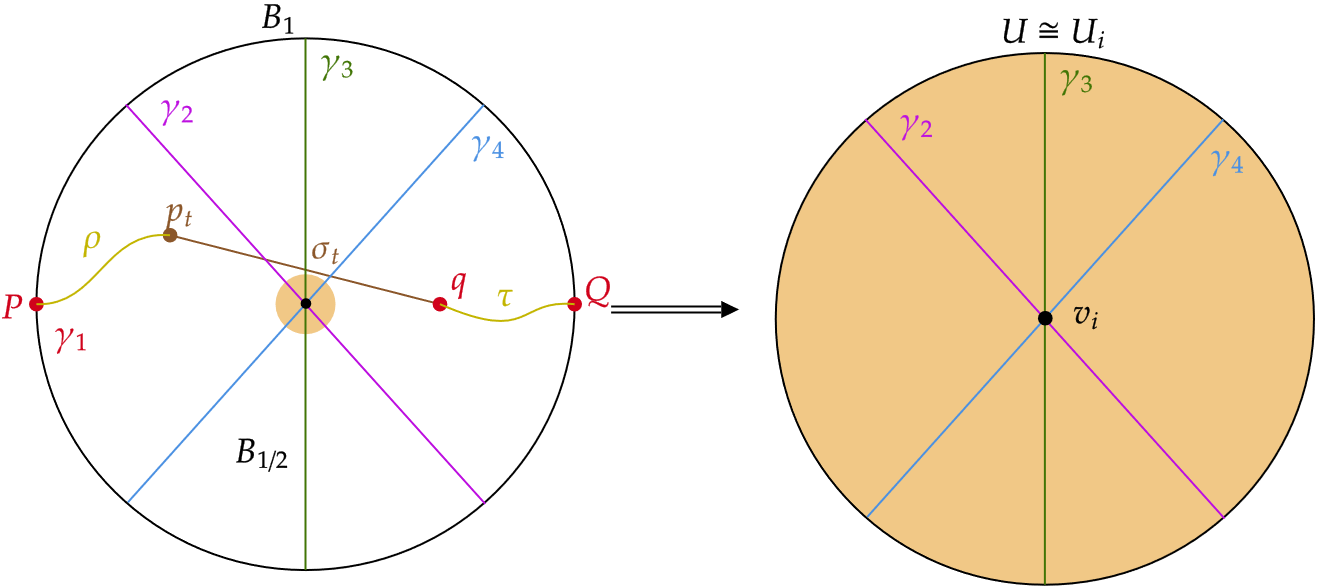}
    \caption{Degree reduction, repeat with new open set}
    \label{fig:degreduc}
\end{figure}

Having handled vertex $v_i$, we note that the change in metric used to lower the order to $2$ (at the cost of increasing the number of vertices by $\binom{d}{2} - 1$) is supported in $U_i$. We now repeat this construction for each $v_j$ on neighborhoods $U_j \ni v_j$ such that $U_i \cap U_j = \emptyset$ and $U_j \cap \{w_{l}\} = \emptyset$. After this finite process, denote the final metric $g_{\eps}$. Therefore, each $\gamma \in S(2L, g)$ has been transformed into a geodesic $\gamma^*$ with respect to $g_\eps$ such that $\mathrm{length}_{g_{\eps}}(\gamma^*) \leq 2L + o(1)$. Moreover, for
\begin{equation*}
\Gamma^* := \{\gamma^* \mid \gamma \in S(2L, g)\},
\end{equation*}
we have
\begin{equation*}
\ord_{\Gamma^*}(x) \leq 2
\end{equation*}
for all $x \in M$.

We now show that having perturbed away higher degree vertices among $S(2L, g)$, no new geodesics appear in $S(L, g_{\eps})$.
\begin{restatable}{lemm}{containLem} \label{containLem}
For $\eps > 0$ sufficiently small and $g_{\eps}$ and $\{\gamma^*\}_{\gamma \in S(2L, g)}$ as above, we have
\[
S(L, g_{\eps}) \subset \{\gamma_*\}_{\gamma \in S(2L, g)}
\]
\end{restatable}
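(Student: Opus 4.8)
\textbf{Proof proposal for \Cref{containLem}.}

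The plan is to argue by contradiction using the compactness result for geodesics under length bounds. Suppose the conclusion fails; then along a sequence $\eps_i \to 0$ we can find connected primitive closed immersed geodesics $\gamma_i \in S(L, g_{\eps_i})$ which are not equal (as immersed curves) to any $\gamma^*$ for $\gamma \in S(2L, g)$. Here one should keep track of the fact that $g_{\eps_i} \xrightarrow{C^k} g$ as $\eps_i \to 0$ (this is the content of the norm estimates in the construction of \Cref{confChangeLem} and the finitely-many-vertices argument). Since $\mathrm{length}_{g_{\eps_i}}(\gamma_i) \leq L$ and $g_{\eps_i} \xrightarrow{C^2} g$, \Cref{lem:compact_var_met} gives, after passing to a subsequence, graphical $C^2$ convergence $\gamma_i \xrightarrow{C^2} \gamma_\infty$, where $\gamma_\infty$ is a closed $g$-geodesic with $\mathrm{length}_g(\gamma_\infty) \leq L$.

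The next step is to pin down $\gamma_\infty$. Its image is a $g$-geodesic of length at most $L$, so (after removing any redundant multiple cover) its primitive representative $\bar\gamma_\infty$ lies in $S(L, g) \subset S(2L, g)$. There are two cases depending on the multiplicity $m \geq 1$ with which $\gamma_i$ converges to $\bar\gamma_\infty$. If $m \geq 2$, then $\gamma_i$ converges with multiplicity to the primitive geodesic $\bar\gamma_\infty$, and \Cref{GeoMultCor} produces a nontrivial Jacobi field on $\bar\gamma_\infty$ (or on its double cover if one-sided) with respect to $g$ — contradicting that $g$ is bumpy. If $m = 1$, then $\gamma_i \xrightarrow{C^2} \bar\gamma_\infty$ graphically with multiplicity one; since $g$ is bumpy, the implicit function theorem for geodesics (White's theorem, or the nondegeneracy of the geodesic equation linearized at $\bar\gamma_\infty$) shows that for $\eps_i$ small the $g_{\eps_i}$-geodesic near $\bar\gamma_\infty$ is \emph{unique}. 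But in the construction of $g_{\eps}$ we explicitly produced such a nearby $g_{\eps_i}$-geodesic, namely $\bar\gamma_\infty^*$ (the perturbed version of $\bar\gamma_\infty$ from \Cref{confChangeLem}, or $\bar\gamma_\infty$ itself if it was not one of the curves perturbed). By uniqueness $\gamma_i = \bar\gamma_\infty^*$ for all large $i$, so $\gamma_i \in \{\gamma^*\}_{\gamma \in S(2L,g)}$, contradicting our choice of the $\gamma_i$.

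I expect the main obstacle to be the $m=1$ uniqueness step, specifically making sure the ``nearby $g_{\eps_i}$-geodesic'' produced by bumpiness really coincides with the $\gamma^*$ from the construction rather than some unaccounted-for competitor. This requires quantifying two scales against each other: the size of the graphical neighborhood of $\bar\gamma_\infty$ on which the $g_{\eps_i}$-geodesic is unique (determined by the bumpiness of $g$ at $\bar\gamma_\infty$, uniformly over the finite set $S(2L,g)$) versus the $C^k$-smallness and graphical proximity guaranteed by $\eps$ in \Cref{confChangeLem}; one chooses $\eps$ small enough at the end of the construction of $g_\eps$ so that every $\gamma^*$ lies well within the uniqueness neighborhood of its parent $\gamma$, and so that the convergence $\gamma_i \to \gamma_\infty$ forces $\gamma_i$ into that same neighborhood. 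A secondary technical point is the reparametrization bookkeeping: all curves in $\mathcal{G}$ are parametrized by constant speed on the circle of length $2\pi$, so ``graphical convergence'' and ``equality of immersed curves'' must be interpreted up to reparametrization, which is harmless since the compactness lemmas are stated in exactly that graphical sense. Once these scale choices are fixed, the contradiction closes and \Cref{containLem} follows; combined with the earlier bound $\ord_{\Gamma^*} \leq 2$ and \Cref{lem:Jacobi_fields} applied to $S(L, g_\eps)$, this yields \Cref{DegFourProp}.
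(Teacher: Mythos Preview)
Your argument is correct and follows the same contradiction-plus-compactness skeleton as the paper, but your $m=1$ branch takes an unnecessary detour through the implicit function theorem/White uniqueness. The paper instead observes that, since the construction already furnishes a $g_{\eps_k}$-geodesic $\gamma_k^*$ which is a graphical perturbation of $\gamma_\infty$, one can feed the \emph{pair} $(\overline{\gamma}_k,\gamma_k^*)$ directly into \Cref{lem:Jacobi_fields}: both are $g_{\eps_k}$-geodesics with distinct graphical parametrizations converging to $\gamma_\infty$, so a nontrivial $g$-Jacobi field on $\gamma_\infty$ (or its double cover) drops out, contradicting bumpiness. This single invocation of \Cref{lem:Jacobi_fields} covers both of your cases at once and, more importantly, completely sidesteps the scale-matching issue you flagged as ``the main obstacle''---no quantification of the IFT uniqueness neighborhood against the $\eps$ of \Cref{confChangeLem} is needed.
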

\begin{proof}
Suppose not, then for a sequence of $\eps_k \to 0$, there exists, $\overline{\gamma}_k$, a geodesic with respect to $g_{\eps_k}$ such that 
\[\mathrm{length}_{g_{\eps_k}}(\overline{\gamma}_k) \leq L.\] 
By Lemma \ref{lem:compact_var_met}, there exists a subsequence such that $\overline{\gamma}_k \xrightarrow{C^{2,\alpha}} \gamma_{\infty}$,  a geodesic with respect to $g$ and $\mathrm{length}_g(\gamma_{\infty}) \leq L$. Moreover, $\gamma_{\infty} \in S(2L, g)$ and the convergence of $\{\overline{\gamma}_k\}$ (or their double covers) is graphical by \Cref{lem:compact_var_met}. We also know that for each $\eps_k$, there exists a $\gamma_{k}^* \rightarrow \gamma_{\infty}$ coming from our construction of the conformal perturbation of metric, $g_{\eps_k}$, which is (by construction) a graphical perturbation of $\gamma_{\infty}$. Lemma \ref{lem:Jacobi_fields} gives a contradiction to bumpiness.
\end{proof}

%
%
We have established that $S(L, g_{\eps}) \subset \{\gamma^*\}_{\gamma \in S(2L,g)}$, our perturbed geodesics, which are in $1$-to-$1$ correspondence with $S(2L, g)$.

We finally show that for $\eps > 0$ sufficiently small, any finite cover $\sigma^*$ of $\gamma^*$ with $\mathrm{length}_{g_{\eps}}(\sigma) \leq L$ does not admit a Jacobi field. This claim follows from a similar contradiction argument. The graphical convergence of $\gamma^*_k \to \gamma$ implies graphical convergence (of a subsequence) of $\sigma_k^*$ (a cover of $\gamma_k^*$ with length at most $L$) to $\sigma$ (a cover of $\gamma$ with length at most $L$). Suppose there exists $\varphi_k$, a Jacobi field along $\sigma_k^*$, with $||\varphi_k^*||_{C^0} = 1$ and 
\[
J_{\sigma_k^*, g_{\eps_i}}(\varphi_k) = 0.
\]
By Arzelà-Ascoli, up to a subsequence, $\{\varphi_k\}$ converges to a nonzero $\varphi$ such that
\[
J_{\sigma, g}(\varphi) = 0,
\]
which contradicts bumpiness. This finishes the proof of \Cref{DegFourProp}.
\end{proof}

We now show openness. Let $k \geq 3$ an integer.
%
\begin{restatable}{prop}{openMetProp} \label{openMetProp}
The set of $C^k$ metrics $g$ such that
\begin{itemize}
    \item every $\Gamma \in \mathcal{G}^L_{\mathrm{prim}}(g)$ has vertices with order at most 2,
    \item every $\Gamma \in \mathcal{G}^L(g)$ does not admit a nontrivial Jacobi field.
\end{itemize}
is open in the $C^k$ topology.
\end{restatable}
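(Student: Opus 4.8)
The plan is to argue by contradiction, exploiting the compactness results (\Cref{lem:compact_var_met}, \Cref{lem:Jacobi_fields}, \Cref{GeoMultCor}) to extract limits that violate the hypotheses. Suppose the set in question is not open. Then there is a metric $g$ with the two stated properties, and a sequence of $C^k$ metrics $g_i \xrightarrow{C^k} g$ (hence in particular $g_i \xrightarrow{C^2} g$) such that for each $i$ one of the two conditions fails for $g_i$. Passing to a subsequence, we may assume the \emph{same} condition fails for every $i$. Since any closed $g_i$-geodesic has length at least the injectivity radius of $(M,g_i)$, which is bounded below for $i$ large, any $\Gamma_i \in \mathcal{G}^L(g_i)$ consists of a uniformly bounded number of loops; so in both cases we get a bounded combinatorial configuration to take limits of.

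\textbf{Case 1: the vertex order bound fails.} For each $i$ there is $\Gamma_i \in \mathcal{G}^L_{\mathrm{prim}}(g_i)$ with a vertex $x_i$ of order $\geq 3$, i.e.\ at least three geodesic segments through $x_i$ with pairwise distinct tangent lines (after passing to the primitive representative, distinct branches). Write $\Gamma_i = \{\gamma_i^1,\dots,\gamma_i^N\}$ (constant $N$ along the subsequence) and apply \Cref{lem:compact_var_met} to each $\gamma_i^j$: up to a subsequence, $\gamma_i^j \xrightarrow{C^2} \gamma_0^j$, a closed $g_0$-geodesic, and $x_i \to x_0$ for some $x_0 \in M$. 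Collect the limits into $\Gamma_0 \in \mathcal{G}^L(g_0)$, and let $\tilde\Gamma_0$ be its primitive representative. The three (or more) distinct tangent lines at $x_i$ converge (in the Grassmannian $G(T_{x_i}M,1) \to G(T_{x_0}M,1)$) to limit lines at $x_0$; if these limit lines are still pairwise distinct we directly contradict the order-$\leq 2$ property of $g_0$. The remaining possibility is that two of the branches through $x_i$ have tangent lines collapsing to the same line in the limit. If the two branches come from distinct limit geodesics $\gamma_0^{j_1} \neq \gamma_0^{j_2}$ (or distinct primitive pieces), then $\gamma_i^{j_1}$ and $\gamma_i^{j_2}$ are distinct geodesics converging $C^2$-graphically to a common limit geodesic through $x_0$ with the same tangent, so \Cref{lem:Jacobi_fields} produces a nontrivial Jacobi field on $\gamma_0^{j_1}$ (or its double cover), contradicting the no-Jacobi-field property of $g_0$. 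If instead the two collapsing branches are two passes of a single primitive $\gamma_i^{j}$ through $x_i$ that limit to the same tangent, then $\gamma_0^j$ is traversed with multiplicity $\geq 2$ by the relevant arcs, and \Cref{GeoMultCor} again yields a nontrivial Jacobi field on $\gamma_0^j$ (or its double cover), the same contradiction. Hence Case 1 cannot occur.

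\textbf{Case 2: the no-Jacobi-field condition fails.} For each $i$ there is $\Gamma_i \in \mathcal{G}^L(g_i)$ admitting a nontrivial Jacobi field; equivalently some constituent geodesic $\sigma_i$ (or a finite cover of it, still of length $\leq L$) carries a normalized Jacobi field $\varphi_i$ with $\|\varphi_i\|_{C^0}=1$ solving $J_{\sigma_i,g_i}(\varphi_i)=0$. By \Cref{lem:compact_var_met} (applied to $\sigma_i$, or to a bounded cover), up to a subsequence $\sigma_i \xrightarrow{C^2} \sigma_0$, a closed $g_0$-geodesic of length $\leq L$. Since $g_i \xrightarrow{C^2} g_0$ and $\sigma_i \xrightarrow{C^2} \sigma_0$, the coefficients of the Jacobi operators $J_{\sigma_i,g_i}$ (which involve the metric, the ambient curvature, and the geometry of $\sigma_i$) converge to those of $J_{\sigma_0,g_0}$, so by Arzelà--Ascoli (the equation gives uniform $C^2$ bounds on $\varphi_i$) a subsequence converges to $\varphi_0$ with $\|\varphi_0\|_{C^0}=1$ and $J_{\sigma_0,g_0}(\varphi_0)=0$. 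Thus $\sigma_0$ (a geodesic or finite cover, of length $\leq L$, hence an element of $\mathcal{G}^L(g_0)$ after passing to the primitive picture) admits a nontrivial Jacobi field with respect to $g_0$, contradicting the assumption on $g_0$. Therefore Case 2 cannot occur either, and the set is open.

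\textbf{Main obstacle.} The only delicate point is Case 1, and specifically the bookkeeping when tangent lines collapse: one must correctly distinguish whether colliding branches belong to distinct limit geodesics or to repeated passes of a single primitive geodesic, and invoke \Cref{lem:Jacobi_fields} or \Cref{GeoMultCor} accordingly, while keeping careful track of the passage to primitive representatives (\Cref{rem:prim}) so that ``order $\geq 3$ vertex'' for $\Gamma_i$ transfers meaningfully to the limit. Verifying that the order genuinely drops only through one of these two Jacobi-field mechanisms — i.e.\ that no branch can simply ``disappear'' in the limit — uses the uniform length bound $L$ and the injectivity-radius lower bound to rule out loss of segments, which is routine but should be stated.
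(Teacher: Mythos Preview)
Your proof is correct. Both your argument and the paper's ultimately rest on the same compactness and Jacobi-field lemmas (\Cref{lem:compact_var_met}, \Cref{lem:Jacobi_fields}, \Cref{GeoMultCor}), and Case~2 is handled essentially identically in both. The difference lies in how the order-$2$ condition is verified in Case~1. The paper first invokes the implicit function theorem (using nondegeneracy) to track each $g_0$-geodesic to a nearby $g$-geodesic, then uses compactness and the Jacobi lemmas to show these are \emph{all} the $g$-geodesics of length $\leq L$; finally it localizes around each order-$2$ vertex of $\Gamma(g_0)$ in a ball of radius less than half the injectivity radius, so that the two perturbed segments can meet at most once there, and remain disjoint outside. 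Your argument instead works directly at the limiting vertex: you follow the three (or more) tangent lines of an order-$\geq 3$ vertex along the sequence, and observe that either they stay distinct in the limit (contradicting the order-$2$ hypothesis for $g_0$) or two of them collapse, which forces a Jacobi field via \Cref{lem:Jacobi_fields} or \Cref{GeoMultCor}. This bypasses the explicit IFT step and the injectivity-radius localization, making the argument a bit more streamlined; the paper's version, on the other hand, gives a slightly more constructive picture of how the intersection pattern deforms. Your closing remark is apt: the only point needing care is confirming that for a primitive $\Gamma_i$ the branches through $x_i$ really do have pairwise distinct tangent lines (uniqueness of geodesics plus primitivity), and that collapsing tangent lines force the two branches to limit onto the \emph{same} geodesic image, which is exactly what feeds into the Jacobi-field lemmas.
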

\begin{proof} Let $g_0$ be such a metric, consider $\{\gamma_i\}_{i=1}^N \subset S(L, g_0)$. Consider the geodesic equation on the space of constant speed, $C^{k,\alpha}$ maps (for some $\alpha > 0$) 
from $S^1 \to M$ and the space of $C^k$-metrics on $M$ 
\begin{align*}
&\Geo: C^{k,\alpha}(S^1, M^2) \times \text{Met}_k(M^2) \to \R \\
&\Geo(\gamma, g) = \nabla^g_{\dot{\gamma}} \dot{\gamma} = \kappa(\gamma, g)
\end{align*}
%
We note that the linearization of $\Geo$ at each $\gamma_i$ is invertible with respect to the $C^{k,\alpha}(S^1, M^2)$ component by the non-degeneracy condition. Thus, the implicit function theorem provides a mapping from $p: g \to \gamma_i(g)$ in an open neighborhood of $(\gamma_i, g_0)$ such that 
\begin{equation} \label{geodesicBound}
||\gamma_i(g_0) - \gamma_i(g)||_{C^{k,\alpha}} \leq K ||g_0 - g||_{C^{k-1,\alpha}}
\end{equation}
and $\gamma_i(g)$ is a geodesic w.r.t. $g$. Here the $C^{k,\alpha}$ bound comes from linearizing the geodesic equation and noting that the Christoffel symbols depends on the metric perturbation in a $C^1$ way. This gives $C^{2,\alpha}$ bounds, and differentiating the geodesic equation $k-2$ times gives the above $C^{k,\alpha}$ bound. \nl 
\indent We now show that (with the non-degeneracy condition) a primitive collection of simple closed geodesics having bounded length and order at most $2$ is an open condition.  
%
Suppose we have a sequence of metrics $g_i \xrightarrow{C^k} g$ and 
\[
\Gamma(g_i) = \{\gamma_{1,i}, \hdots, \gamma_{l_i, i}\} \in \mathcal{G}_{\mathrm{prim}}^L(g_i).
\]
%
By \Cref{lem:compact_var_met} and \Cref{lem:Jacobi_fields}, (up to a subsequence) each $\gamma_{j,i} \to \gamma_j$, a distinct geodesic with respect to $g$ for $j=1,\hdots, l$, with multiplicity exactly one (by \Cref{GeoMultCor}). Let $\Gamma(g) = \{\gamma_1, \hdots, \gamma_l\} \in \mathcal{G}^L_{\mathrm{prim}}(g)$.

For each $v_j \in \Gamma(g)$ with $\ord(v_j) = 2$ (which holds for all vertices of $\Gamma(g)$ by assumption), let $U_j$ be a geodesic ball centered at $v_j$ with radius less than half of the injectivity radius. Therefore, $U_j \cap \{\gamma_i\}$ consists of two geodesics intersecting at a single point. Consider 
\[
S(g) = \left( M \backslash (\sqcup_j U_j) \right) \cap (\cup_i \gamma_i(g_0))
\]
Then $S(g) = \sqcup_{i = 1}^N \overline{\gamma}_i(g)$ consists of $N$ geodesics with boundary, each of which are simple and disjoint from each other. Let $d$ be the minimum distance between all pairs $(\overline{\gamma}_p, \overline{\gamma}_j)$ with $j \neq p$. Then for $g_i$ sufficiently close to $g$, we see that 
\[
S(g_i) = \left( M \backslash (\sqcup_j U_j) \right) \cap (\cup_p \gamma_{i,p}) = \sqcup_{p = 1}^N \overline{\gamma}_{i,p}
\]
will also consist of $N$ geodesics, each not intersecting. Thus, it suffices to show that the order is at most two in each $U_j$. Note that for $||g - g_i||_{C^2}$ sufficiently small, the number of connected components in $U_j \cap \{\gamma_{i,p}\}$ will still be $2$.
By the assumption that $U_j$ is contained in a ball of size smaller than half the injectivity radius, the two connected geodesics in $U_j$ can intersect at most once, and so there exists an $\eps > 0$ such that 
\[
||g - g_i||_{C^2} < \eps \implies \sup_{x \in \Gamma(g_i)} \ord(x) \leq 2,
\]
as desired.
%
\end{proof}

To conclude Theorem \ref{GenSelfIntFull}, we combine Propositions \ref{DegFourProp} and \ref{openMetProp}. Let $\mathcal{M}^L$ denote the set of $C^k$ metrics $g$ for which
\begin{itemize}
    \item every $\Gamma \in \mathcal{G}^L_{\mathrm{prim}}(g)$ has vertices with order at most 2,
    \item every $\Gamma \in \mathcal{G}^L(g)$ does not admit a nontrivial Jacobi field.
\end{itemize}
Then $\mathcal{M}^L$ is open and dense. Since the set of metrics $g$ with $\mathcal{G}_{\mathrm{prim}}(g) = \mathcal{G}_+(g)$ contains
\[
\bigcap_{L \in \N} \mathcal{M}^L,
\]
which is an intersection of open and dense sets, we conclude that the desired set of metrics is $C^k$-generic in the Baire sense. \qed 

\section{Index upper bound} \label{IndexSec}

In this section, we prove Theorem \ref{thm:index_upper}. The strategy of the proof for the $p$-width is as follows. First, we use min-max for the $\varepsilon$-Allen--Cahn energy with the sine-Gordon potential to produce an $\varepsilon$-Allen--Cahn solution having Morse index at most $p$ (see \cite[Theorem 3]{gaspar2018allen}). Second, we take $\eps \to 0$. By \cite[Theorem 3.1]{chodosh2023p}, the associated varifolds converge to a union of closed immersed geodesics (with integer multiplicity). By \cite[Theorem 1.3]{Dey}, the mass of the limit equals the $p$-width. Finally, when the limit union of closed geodesics only has self-intersections of order 2 (which holds generically by Theorem \ref{GenSelfIntFull}\footnote{The general case follows by approximation by generic metrics.}), we apply a new vector field extension argument in the index bound proof of \cite{gaspar2020index,hiesmayr2018index,le2011index,le2015index} to find a subspace of variations of dimension equal to the index of the limit union of closed geodesics over which the second variation of energy of the Allen--Cahn solutions is negative definite for $\eps > 0$ small.

\subsection{Index}

Let $\gamma$ be a primitive closed geodesic. Let $\mathcal{X}^{\perp}(\gamma)$ denote the vector space of normal vector fields along $\gamma$. Let $Q_{\gamma}(\cdot, \cdot)$ be the bilinear form given by the second variation of length of $\gamma$ over $\mathcal{X}^{\perp}(\gamma)$. Let $\mathrm{index}(\gamma)$ be the maximal dimension of a subspace of $\mathcal{X}^{\perp}(\gamma)$ on which $Q_{\gamma}$ is negative definite.

Now let $\Gamma \in \mathcal{G}_{\mathrm{prim}}$. We write
\[ \mathcal{X}^{\perp}(\Gamma) = \bigoplus_{\gamma \in \Gamma} \mathcal{X}^{\perp}(\gamma). \]
For $X \in \mathcal{X}^{\perp}(\Gamma)$ and $\gamma \in \Gamma$, we let $X_{\gamma}$ denote the $\gamma$-summand of $X$. For $X, Y \in \mathcal{X^{\perp}}(\Gamma)$, we define
\[ Q_{\Gamma}(X, Y) = \sum_{\gamma\in\Gamma} Q_{\gamma}(X_{\gamma}, Y_{\gamma}). \]
Then, $\mathrm{index}(\Gamma)$ is defined to be the maximal dimension of a subspace of $\mathcal{X}^{\perp}(\Gamma)$ on which $Q_{\Gamma}$ is negative definite. Equivalently,
\[ \mathrm{index}(\Gamma) = \sum_{\gamma \in \Gamma} \mathrm{index}(\gamma). \]

If $\tilde{\Gamma} \in \mathcal{G}$ is not primitive, then we let $\mathrm{index}(\tilde{\Gamma}) = \mathrm{index}(\Gamma)$ for any primitive representative $\Gamma \in \mathcal{G}_{\mathrm{prim}}$ of $\tilde{\Gamma}$ (see Remark \ref{rem:prim}). Since the index of a primitive closed geodesic is parametrization-independent, this notion of index is well-defined.

We record the fact that index is lower semicontinuous under smooth convergence. 

\begin{lemma}\label{lem:index_lower_semi}
    Suppose $\{g_i\}_{i\in\N}$ is a sequence of metrics on $M$ converging smoothly to $g$. Suppose $\{\Gamma_i\}_{i \in \N} \subset \mathcal{G}_{\mathrm{prim}}(g_i)$ converges smoothly to $\Gamma \in \mathcal{G}(g)$. Then
    \[ \mathrm{index}_g(\Gamma) \leq \liminf_{i \to \infty} \mathrm{index}_{g_i}(\Gamma_i). \]
\end{lemma}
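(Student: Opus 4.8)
The plan is the standard transplantation argument for lower semicontinuity of the Morse index: it suffices to show that for every integer $k \le \mathrm{index}_g(\Gamma)$ one has $\mathrm{index}_{g_i}(\Gamma_i) \ge k$ for all $i$ large, and then let $k \uparrow \mathrm{index}_g(\Gamma)$. To set this up I would first fix a subspace $V \subset \mathcal{X}^\perp(\Gamma)$ of dimension $k$ on which $Q_\Gamma^g$ is negative definite. When $\Gamma$ is primitive this is immediate from $\mathrm{index}_g(\Gamma) = \sum_{\gamma\in\Gamma}\mathrm{index}_g(\gamma)$; when $\Gamma$ is not primitive, each component $\gamma^j$ is a finite cover $\pi_j:\gamma^j\to\hat\gamma^j$ of a primitive geodesic appearing in a primitive representative, and pulling a normal field back along $\pi_j$ multiplies the second variation by the covering degree, so a negative-definite subspace of $\mathcal{X}^\perp(\hat\gamma^j)$ lifts to one of the same dimension in $\mathcal{X}^\perp(\gamma^j)$; selecting one component of $\Gamma$ over each distinct primitive and summing produces such a $V$ of dimension $\mathrm{index}_g(\Gamma) \ge k$. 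Since $V$ is finite dimensional, fix $c>0$ with $Q_\Gamma^g(X,X) \le -c\|X\|^2$ for all $X\in V$.

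Next I would transplant $V$ onto $\Gamma_i$. Using the $C^\infty$-graphical convergence $\gamma_i^j\to\gamma^j$, write $\gamma_i^j(s) = \exp_{\gamma^j(s),g}(N_i^j(s))$ for normal sections $N_i^j\to 0$ in $C^\infty$, and define a linear map $\Psi_i^j:\mathcal{X}^\perp(\gamma^j)\to\mathcal{X}^\perp(\gamma_i^j)$ by $g_i$-parallel transporting a normal field along the short radial geodesics $t\mapsto \exp_{\gamma^j(s),g}(tN_i^j(s))$ and then orthogonally projecting onto the $g_i$-normal line of $\gamma_i^j$. Put $\Psi_i = \bigoplus_j \Psi_i^j$ and $V_i = \Psi_i(V)\subset\mathcal{X}^\perp(\Gamma_i)$. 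Because $N_i^j\to 0$, $g_i\to g$, and $\gamma_i^j\to\gamma^j$ all smoothly, $\Psi_i^j$ converges to the identity together with one derivative (under the obvious identification of the normal line bundles along the converging curves); in particular $\Psi_i|_V$ is injective for $i$ large, so $\dim V_i = \dim V \ge k$.

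The one genuine point is the convergence of the quadratic forms, which is where smoothness of the convergence is essential. By the second variation formula of \Cref{prop:variation_formulas}, $Q_{\gamma_i^j}^{g_i}(\Psi_i^j X,\Psi_i^j X) = \int_{\gamma_i^j}\big(|\nabla_T^\perp(\Psi_i^j X)|_{g_i}^2 - |\Psi_i^j X|_{g_i}^2\,K_{g_i}\big)$; since $K_{g_i}\to K_g$ and the Levi-Civita connections of $g_i$ converge in $C^\infty$, the curves converge in $C^\infty$, and $\Psi_i^j X\to X$ in $C^1$ uniformly for $X$ in the unit ball of $V^j$, the right-hand side converges to $Q_{\gamma^j}^g(X,X)$; summing over $j$, the quadratic form $X\mapsto Q_{\Gamma_i}^{g_i}(\Psi_i X,\Psi_i X)$ on $V$ converges, uniformly on the unit sphere of $V$, to $Q_\Gamma^g$ (their coefficient matrices in a fixed basis converge). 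Hence for $i$ large $Q_{\Gamma_i}^{g_i}(\Psi_i X,\Psi_i X) \le -\tfrac{c}{2}\|X\|^2 < 0$ for all nonzero $X\in V$, so $Q_{\Gamma_i}^{g_i}$ is negative definite on $V_i$ and $\mathrm{index}_{g_i}(\Gamma_i) \ge \dim V_i \ge k$, as needed. The main (and only mildly technical) obstacle is this transplantation step: constructing $\Psi_i$, verifying its $C^1$-closeness to the identity, and deducing the convergence of the second-variation integrals; the covering bookkeeping for non-primitive $\Gamma$ is routine.
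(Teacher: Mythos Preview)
Your proposal is correct and follows essentially the same transplantation argument as the paper: extend/transport normal fields from $\Gamma$ to $\Gamma_i$, use smooth convergence of the metrics and curves together with the second variation formula of Proposition~\ref{prop:variation_formulas} to get convergence of the quadratic forms on a fixed finite-dimensional negative subspace, and reduce the non-primitive case by pulling back along the finite covers (which multiplies $Q$ by the degree). The only cosmetic difference is that the paper builds the transplant by first extending $X$ to a tubular neighborhood via parallel transport along normal geodesics and then restricting and projecting, whereas you parallel-transport directly along the radial geodesics joining $\gamma^j(s)$ to $\gamma_i^j(s)$; these constructions coincide up to negligible errors and yield the same $C^1$-convergence needed for the integrals.
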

\begin{proof}
    Suppose $\{\gamma_i\}_{i \in \N}$ is a sequence of closed immersed geodesics in $(M, g_i)$ converging smoothly in the graphical sense to a closed immersed geodesic $\gamma$ in $(M, g)$.

    There are $\eps,\ \delta > 0$ so that for every $\theta \in S^1$, $\gamma\mid_{(\theta - \eps, \theta + \eps)}$ is an embedding in $B_\delta(\gamma(\theta))$. By compactness, we can find finitely many $\theta_1, \hdots, \theta_l \in S^1$ so that $\{(\theta_j - \eps/2, \theta_j + \eps/2)\}_{j=1}^l$ covers $S^1$ and $\{B_\delta(\gamma(\theta_j))\}_{j=1}^l$ covers $B_{\delta/2}(\gamma(S^1))$.
    
    Let $X \in \mathcal{X}^{\perp}(\gamma)$. For each $j = 1,\hdots, l$, we let $\tilde{X}_j$ be the extension of $X$ to the $\delta/4$ normal tubular neighborhood of $\gamma\mid_{(\theta_j - \eps, \theta_j + \eps)}$ given by parallel transport along normal geodesics. Since $\tilde{X}_j$ is constructed canonically in each neighborhood, $\tilde{X}_j$ agrees with $\tilde{X}_{j+1}$ and $\tilde{X}_{j-1}$ on the overlaps.
    For $i$ sufficiently large (so that $\gamma_i(S^1)$ lies in $B_{\delta/8}(\gamma(S^1))$), we define $X_i \in \mathcal{X}^{\perp}(\gamma_i)$ by taking the perpendicular component of the vector field along $\gamma_i$ given by $\tilde{X}_j(\gamma_i(\theta))$ if $j$ minimizes $|\theta - \theta_j|$. By smooth convergence and Proposition \ref{prop:variation_formulas},
    \[ \lim_{i \to \infty} Q_{\gamma_i}^{g_i}(X_i, X_i) = Q_{\gamma}^g(X, X). \]
    
    Applying this extension construction to a basis for a subspace of $\mathcal{X}^{\perp}(\gamma)$ of maximal dimension on which $Q_{\gamma}^g$ is negative definite implies
    \[ \mathrm{index}_g(\gamma) \leq \liminf_{i \to \infty} \mathrm{index}_{g_i}(\gamma_i). \]

    To conclude the lemma, we need only confirm that if $\tilde{\gamma}$ is not primitive and $\gamma$ is a primitive geodesic with the same image as $\tilde{\gamma}$, then $\mathrm{index}_g(\gamma) \leq \mathrm{index}_g(\tilde{\gamma})$. Indeed, a basis of a subspace of $\mathcal{X}^{\perp}(\gamma)$ on which $Q_{\gamma}^g$ is negative definite can be extended to a basis of a subspace of $\mathcal{X}^{\perp}(\tilde{\gamma})$ of the same dimension on which $Q_{\tilde{\gamma}}^g$ is negative definite. Namely, given $X \in \mathcal{X}^{\perp}(\gamma)$, we take $\tilde{X}(\tilde{\theta}) = X(\theta)$ for any $\tilde{\theta},\ \theta \in S^1$ satisfying $\tilde{\gamma}(\tilde{\theta}) = \gamma(\theta)$. By Proposition \ref{prop:variation_formulas}, $Q_{\tilde{\gamma}}^g(\tilde{X}, \tilde{X}) = mQ_{\gamma}^g(X,X)$, where $m$ is the number of times $\tilde{\gamma}$ traverses the image of $\gamma$.
\end{proof}

We also record lower semi-continuity of the weighted vertex count.

\begin{lemma}\label{lem:weighted_vertex_lower_semi}
    Suppose $\{g_i\}_{i\in\N}$ is a sequence of metrics on $M$ converging smoothly to $g$. Suppose $\{\Gamma_i\}_{i \in \N} \subset \mathcal{G}_{\mathrm{prim}}(g_i)$ converges smoothly to $\tilde{\Gamma} \in \mathcal{G}(g)$ with primitive representative $\Gamma \in \mathcal{G}_{\mathrm{prim}}(g)$. Then
    \[ \sum_{x\in\mathrm{Vert}(\Gamma)} \binom{\mathrm{ord}_{\Gamma}(x)}{2} \leq \liminf_{i \to \infty} \sum_{x\in\mathrm{Vert}(\Gamma_i)} \binom{\mathrm{ord}_{\Gamma_i}(x)}{2}. \]
\end{lemma}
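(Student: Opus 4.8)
Lemma \ref{lem:weighted_vertex_lower_semi}

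The plan is to argue by a local analysis near each vertex of the limit, combined with the observation that smooth graphical convergence forces the parametrized loops of $\Gamma_i$ to track those of $\Gamma$ (after passing to the primitive representative) with the same combinatorial structure in a neighborhood of each vertex. Concretely, first I would reduce to the case where $\tilde{\Gamma}=\Gamma$ is primitive: by Remark \ref{rem:prim}, $\mathrm{ord}_{\tilde\Gamma}=\mathrm{ord}_{\Gamma}$ and $\mathrm{Vert}(\tilde\Gamma)=\mathrm{Vert}(\Gamma)$ by definition, so the left-hand side is unchanged; and similarly one may as well assume each $\Gamma_i$ is replaced by its primitive representative, since deleting redundant loops and reparametrizing does not change $\mathrm{ord}_{\Gamma_i}$ or $\mathrm{Vert}(\Gamma_i)$, and smooth graphical convergence of immersions is preserved when passing to primitive representatives (each $\gamma^j$ primitive means for $i$ large each $\gamma_i^j$ is an immersion $C^\infty$-close to $\gamma^j$, hence already primitive, and distinct images stay distinct).

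Next, fix the finitely many vertices $x_1,\dots,x_m$ of $\Gamma$. Since $\mathrm{Vert}(\Gamma)$ is finite (Remark after Definition \ref{SelfIntDef}), choose disjoint geodesic balls $B_a\ni x_a$, each of radius less than the injectivity radius, so small that for each $a$ the preimage $\bigsqcup_{\gamma\in\Gamma}\gamma^{-1}(B_a)$ consists of exactly $\mathrm{ord}_\Gamma(x_a)$ disjoint arcs $I_{a,1},\dots,I_{a,d_a}$ (with $d_a=\mathrm{ord}_\Gamma(x_a)$), each mapped by the relevant $\gamma$ to an embedded arc through $x_a$, and so that outside $\bigcup_a B_a$ the union $\bigcup_\gamma\gamma(S^1)$ is a disjoint union of embedded arcs (possible since geodesic segments shorter than the injectivity radius are embedded and meet each other at most once, and all self-intersections occur at the $x_a$). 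For $i$ large, graphical convergence gives that $\Gamma_i$ restricted to a slightly larger ball $B_a'$ consists of exactly $d_a$ arcs, each a $C^1$-small graph over the corresponding arc of $\Gamma$. The key local claim is then: $\sum_{x\in B_a\cap\mathrm{Vert}(\Gamma_i)}\binom{\mathrm{ord}_{\Gamma_i}(x)}{2}\ \geq\ \binom{d_a}{2}$ for all large $i$. This holds because any two of the $d_a$ arcs of $\Gamma_i$ in $B_a$, being $C^1$-close to two arcs through $x_a$ that are honest geodesics of $g$ meeting transversely or tangentially at $x_a$ — more to the point, being geodesics of $g_i$ inside a ball smaller than the injectivity radius of $g_i$ — must intersect at least once inside $B_a$ (they converge to arcs that share the point $x_a$, so the graphs cannot stay disjoint; two geodesics of $g_i$ in such a small ball meet at most once, so they meet exactly once). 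Thus the $\binom{d_a}{2}$ pairs of arcs contribute at least $\binom{d_a}{2}$ to the sum $\sum_{v}\binom{\mathrm{ord}(v)}{2}$ counted over $B_a$; here I use the elementary identity that if at the vertices $v$ inside $B_a$ the arcs are partitioned into groups of sizes $k_1,k_2,\dots$ (a vertex of order $k$ accounting for $\binom{k}{2}$ pairs of arcs meeting there), and every pair of the $d_a$ arcs meets somewhere in $B_a$, then $\sum_b\binom{k_b}{2}\geq\binom{\sum_b k_b - (\#\text{groups}-1)\cdot 0}{2}$... more simply: the $\binom{d_a}{2}$ arc-pairs are distributed among the vertices, each vertex $v$ absorbing exactly $\binom{\mathrm{ord}_{\Gamma_i}(v)}{2}$ of them, so $\sum_{v\in B_a}\binom{\mathrm{ord}_{\Gamma_i}(v)}{2}\geq\binom{d_a}{2}$ as every arc-pair is absorbed by at least one vertex.

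Finally, since the balls $B_a$ are disjoint and, for $i$ large, $\Gamma_i$ has no self-intersections outside $\bigcup_a B_a'$ (the arcs of $\Gamma_i$ outside the slightly enlarged balls are $C^1$-close to a disjoint union of embedded arcs, hence remain embedded and pairwise disjoint there for $i$ large — here one uses the uniform positive distance between the finitely many compact embedded arcs of $\Gamma$ outside $\bigcup B_a$), every vertex of $\Gamma_i$ lies in some $B_a$ for $i$ large. Summing the local inequality over $a$ gives
\[
\sum_{x\in\mathrm{Vert}(\Gamma_i)}\binom{\mathrm{ord}_{\Gamma_i}(x)}{2}\ \geq\ \sum_{a=1}^m\binom{d_a}{2}\ =\ \sum_{x\in\mathrm{Vert}(\Gamma)}\binom{\mathrm{ord}_{\Gamma}(x)}{2}
\]
for all sufficiently large $i$, and taking $\liminf_{i\to\infty}$ finishes the proof. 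The main obstacle is the local claim: one must be careful that, after passing to primitive representatives and enlarging balls, two distinct arcs of $\Gamma_i$ through a region converging onto a common point $x_a$ genuinely intersect — this is where I invoke that they are geodesics of $g_i$ in a ball below the injectivity radius (so they meet at most once, hence "at least one" upgrades to "exactly one"), together with a continuity/degree argument that two $C^1$-graphs over arcs sharing an endpoint limit-point cannot remain disjoint. I should also double-check the bookkeeping that a vertex of $\Gamma_i$ of order $k$ corresponds to exactly $\binom k2$ pairs of local arcs meeting there and that every one of the $\binom{d_a}{2}$ pairs is counted, which is where the combinatorial inequality $\sum_v\binom{\mathrm{ord}(v)}{2}\geq\binom{d_a}{2}$ comes from.
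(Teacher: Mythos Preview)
Your approach is essentially the paper's: localize around each vertex $x$ of $\Gamma$, observe that the $d=\mathrm{ord}_\Gamma(x)$ transverse geodesic arcs of $\Gamma$ through $x$ are tracked by $d$ arcs of $\Gamma_i$, any two of which must still intersect in the ball, and hence $\sum_{v\in B}\binom{\mathrm{ord}_{\Gamma_i}(v)}{2}\geq\binom{d}{2}$; summing over the disjoint balls gives the lemma. One caution: your final claim that \emph{every} vertex of $\Gamma_i$ lies in some $B_a$ is unnecessary for the inequality (you only need a lower bound on the global sum, and the balls are disjoint) and can actually fail when $\tilde\Gamma\neq\Gamma$ --- e.g.\ if two distinct loops of $\Gamma_i$ converge to the same primitive loop of $\Gamma$, they may intersect far from $\mathrm{Vert}(\Gamma)$. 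The paper's proof simply omits this claim and your ``exactly $d_a$ arcs'' assertion, writing only the local inequality and summing; with those unnecessary assertions dropped, your argument is correct and matches the paper's.
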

\begin{proof}
    Choose $\eps$ small enough so that $B_\eps(x)$ is disjoint from $B_\eps(y)$ for all $x \neq y \in \mathrm{Vert}(\Gamma)$ and $\Gamma$ in $B_{\eps}(x)$ consists of $\mathrm{ord}_{\Gamma}(x)$ geodesic segments intersecting at $x$ for all $x \in \mathrm{Vert}(\Gamma)$.

    Fix $x \in \mathrm{Vert}(\Gamma)$, and let $\{\sigma_1, \hdots, \sigma_{\mathrm{ord}_{\Gamma}(x)}\}$ be the geodesic segments making up $\Gamma$ in $B_{\eps}(x)$. By smooth convergence, for all $i$ sufficiently large, there are components $\{\sigma_1^i, \hdots, \sigma_{\mathrm{ord}_{\Gamma}(x)}^i\}$ of $\Gamma_i$ in $B_\eps(x)$ so that $\sigma_j^i$ converges smoothly and graphically to $\sigma_j$. By smooth convergence, if $j_1 \neq j_2$, then $\sigma_{j_1}^i$ intersects $\sigma_{j_2}^i$ at some $x^i_{j_1, j_2} \in B_\eps(x)$ for all $i$ sufficiently large. Hence,
    \[ \binom{\mathrm{ord}_{\Gamma}(x)}{2} \leq \liminf_{i \to \infty} \sum_{y \in \mathrm{Vert}(\Gamma_i) \cap B_{\eps}(x)} \binom{\mathrm{ord}_{\Gamma_i}(y)}{2}. \]
    The desired inequality follows by summing over the vertices of $\Gamma$.
\end{proof}

\subsection{Vector field extension}

Given $\Gamma \in \mathcal{G}_+$ and $X \in \mathcal{X}^{\perp}(\Gamma)$, we aim to find a smooth ambient vector field $\tilde{X} \in \mathcal{X}(M)$ with the property that the normal component of the restriction of $\tilde{X}$ to any $\gamma \in \Gamma$ agrees with $X_{\gamma}$. Since the second variation of length only depends on the normal component of the variation vector field (see Proposition \ref{prop:variation_formulas}), the second variation of the length of $\Gamma$ where each $\gamma \in \Gamma$ varies by $X_{\gamma}$ agrees with the second variation of the length of $\Gamma$ along the ambient flow generated by $\tilde{X}$.

For the index estimate of the next subsection, we also require that this vector field extension has good estimates on the tangential component of the restriction of $\tilde{X}$ to $\gamma \in \Gamma$ and the normal derivatives of $\tilde{X}$ along $\gamma \in \Gamma$.

\begin{restatable}{lemm}{lem:extension}\label{lem:extension}
    Let $\Gamma \in \mathcal{G}_+$ and $X \in \mathcal{X}^{\perp}(\Gamma)$. For any $\delta > 0$, there is a smooth vector field $\tilde{X}$ on $M$ so that
    \begin{equation}\label{eqn:extension}
        (\tilde{X}\mid_{\gamma})^{\perp} = X_\gamma \ \ \text{for all}\ \ \gamma \in \Gamma,
    \end{equation}
    \begin{equation}\label{eqn:same_2nd} Q_{\Gamma}(X, X) = \delta^2\Gamma(\tilde{X}, \tilde{X}) \end{equation}
    and
    \begin{align}\label{eqn:2nd_error}
        & \sum_{\gamma} \||\nabla_{n_\gamma}\tilde{X}| + |(\tilde{X}\mid_{\gamma})^T|\|_{L^{\infty}(\gamma)} \leq C(M, \Gamma, X),\\
        & \sum_{\gamma} \cH^1(\mathrm{spt}(|(\nabla_{n_\gamma}\tilde{X})\mid_\gamma| + |(\tilde{X}\mid_{\gamma})^T|)) \leq \delta,\notag
    \end{align}
    %
    where $n_{\gamma}$ is any measurable choice of unit normal vector field along $\gamma \in \Gamma$ and $\mathcal{H}^1$ is the 1-dimensional Hausdorff measure.
\end{restatable}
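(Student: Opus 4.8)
\emph{Reduction of \eqref{eqn:same_2nd}.} This identity is automatic from \eqref{eqn:extension}: for a geodesic $\gamma$, Proposition \ref{prop:variation_formulas} shows $\frac{d^2}{dt^2}\big|_{t=0}\mathrm{length}(\Phi_t(\gamma))$ depends only on $(\tilde{X}|_\gamma)^\perp$, and $Q_\gamma$ is by definition exactly this second variation restricted to normal fields, so $\frac{d^2}{dt^2}\big|_{t=0}\mathrm{length}(\Phi_t(\gamma)) = Q_\gamma((\tilde{X}|_\gamma)^\perp,(\tilde{X}|_\gamma)^\perp) = Q_\gamma(X_\gamma,X_\gamma)$, and $\delta^2\Gamma(\tilde{X},\tilde{X}) = \sum_{\gamma\in\Gamma}\frac{d^2}{dt^2}\big|_{t=0}\mathrm{length}(\Phi_t(\gamma))$. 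Thus the content of the lemma is the construction of a smooth $\tilde{X}$ satisfying \eqref{eqn:extension} and \eqref{eqn:2nd_error}. The plan is to glue two local models along $\Gamma$ with a partition of unity whose bump functions are \emph{constant in the directions normal to $\Gamma$}.

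\emph{Setup.} Write $V = \mathrm{Vert}(\Gamma)$. Since $\Gamma\in\mathcal{G}_+$ and the curves of $\Gamma$ are primitive, the two local branches at each $v\in V$ have distinct tangent lines (two geodesic arcs through $v$ with the same tangent would coincide, contradicting primitivity). Hence there is $\rho_*=\rho_*(M,\Gamma)>0$ so that for every $\rho_0\le\rho_*$: the balls $B_{2\rho_0}(v)$ ($v\in V$) are pairwise disjoint, lie in geodesically convex balls of radius $<\mathrm{inj}(M,g)$, and meet $\Gamma$ in exactly two embedded geodesic arcs $\sigma_1^v,\sigma_2^v$ through $v$ whose signed distance functions $d_1,d_2$ have linearly independent gradients on all of $B_{2\rho_0}(v)$; and the components (``edges'' $A$) of $\Gamma\setminus\bigcup_v B_{\rho_0/4}(v)$ are embedded geodesic arcs or loops admitting tubular neighborhoods $U_A$ of width $w=w(\rho_0)>0$ that are pairwise disjoint and satisfy $U_A\cap\Gamma = A$. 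Given $\delta>0$, fix $\rho_0 := \min\{\rho_*,\ \delta/(20(|V|+1))\}$.

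\emph{Local models and gluing.} On $U_A$, let $E_A$ be the parallel transport of $X|_A$ along the geodesics normal to $A$, multiplied by a fixed cutoff $\psi$ of the (unsigned) normal distance with $\psi\equiv1$ near $0$, $\psi'(0)=0$, $\psi\equiv 0$ near the tube boundary; then $(E_A|_A)^\perp = X|_A$, $(E_A|_A)^T=0$, and $\nabla_{n_A}E_A=0$ on $A$, the last because parallel transport has vanishing covariant derivative along the normal geodesics, whatever the scale of $\psi$. On $B_{2\rho_0}(v)$, choose the signs of $d_1,d_2$ so that $N_i:=\nabla d_i$ restricts on $\sigma_i^v$ to the unit normal used to write $X|_{\sigma_i^v}=a_i\,n_{\sigma_i^v}$, set $\tilde a_i := a_i\circ\pi_{\sigma_i^v}$ (nearest-point projection), and let $Y_v$ be the unique smooth vector field on $B_{2\rho_0}(v)$ with $\langle Y_v,N_i\rangle=\tilde a_i$ for $i=1,2$ (well-defined since $N_1,N_2$ are independent — this is the linear-algebra input). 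Since $N_i=n_{\sigma_i^v}$ and $\tilde a_i=a_i$ on $\sigma_i^v$, we have $(Y_v|_{\sigma_i^v})^\perp=X|_{\sigma_i^v}$; moreover, using a uniform positive lower bound on the angle between $\sigma_1^v$ and $\sigma_2^v$ (finitely many vertices), the uniform bound on the Hessians of $d_i$ (the $\sigma_i^v$ are geodesics, $\rho_0\le\rho_*$, no focal points), and $\|X\|_{C^1}$, one gets $|(Y_v|_{\sigma_i^v})^T|+|\nabla_{n_{\sigma_i^v}}Y_v| \le C(M,\Gamma,X)$ on $\sigma_i^v$, with $C$ \emph{independent of} $\rho_0$. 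Now put $\zeta_v := \zeta(d(\cdot,v)/\rho_0)$ with $\zeta\equiv1$ on $[0,1/2]$, $\zeta\equiv0$ on $[3/2,\infty)$; for each edge $A$ let $\xi_A := \big(1-\sum_v\zeta_v\big)\big|_A$ (this is $\ge0$ on $A$ and vanishes near the endpoints of $A$) and extend $\xi_A$ to $U_A$ to be constant along the geodesics normal to $A$, so $\mathrm{spt}\,\xi_A\Subset U_A$. Define
\[
\tilde{X} := \sum_{v\in V}\zeta_v\,Y_v + \sum_A \xi_A\,E_A,
\]
extended by zero; it is smooth on $M$. Because $\sum_v\zeta_v+\sum_A\xi_A\equiv1$ on $\Gamma$ and every local model has the correct normal component along $\Gamma$, \eqref{eqn:extension} holds (at a vertex $v$ one has $\tilde X(v)=Y_v(v)$, which satisfies both branch conditions). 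For the estimates: on $\Gamma\setminus\bigcup_v\overline{B_{2\rho_0}(v)}$ only terms $\xi_A E_A$ are nonzero near a given point, and there $(\tilde X|_\gamma)^T=\xi_A(E_A|_A)^T=0$ while $\nabla_{n_\gamma}\tilde X=(n_A\xi_A)E_A+\xi_A\nabla_{n_A}E_A=0$, since $\xi_A$ is constant normal to $A$ and $\nabla_{n_A}E_A|_A=0$. On $\Gamma\cap\overline{B_{2\rho_0}(v)}$, the branches $\sigma_i^v$ are geodesics through $v$, so $\nabla d(\cdot,v)$ is tangent to them and $n_{\sigma_i^v}\zeta_v=0$ on $\sigma_i^v$; hence $\nabla_{n_\gamma}\tilde X|_\gamma=\zeta_v\nabla_{n_\gamma}Y_v|_\gamma$ and $(\tilde X|_\gamma)^T=\zeta_v(Y_v|_\gamma)^T$ there, both bounded by $C(M,\Gamma,X)$ and supported in $\bigcup_v(\Gamma\cap\overline{B_{2\rho_0}(v)})$, a set of total length $\le\sum_{v\in V}10\rho_0\le\delta$. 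This gives \eqref{eqn:2nd_error} (both quantities are insensitive to the sign of $n_\gamma$, so the measurable choice of $n_\gamma$ is irrelevant).

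\emph{Main obstacle.} The delicate point is \eqref{eqn:2nd_error}: shrinking the vertex neighborhoods makes the ``bad set'' small but a priori forces cutoff gradients of size $\sim\rho_0^{-1}$ into the $L^\infty$ bound, which would blow up as $\delta\to0$. This is avoided by the two structural choices above — the bump functions $\zeta_v,\xi_A$ are constant in the directions normal to $\Gamma$, so their large gradients appear only in \emph{tangential} derivatives, which do not enter \eqref{eqn:2nd_error}; and $E_A$ is built by normal parallel transport, so $\nabla_n E_A$ vanishes identically along $A$ regardless of how thin the tube is. The only $\rho_0$-independent contributions left are the intrinsic quantities $|(Y_v)^T|$ and $|\nabla_n Y_v|$ on the branches, controlled by the fixed positive intersection angles of $\Gamma$ and by $\|X\|_{C^1}$; and the fact that $Y_v$ exists at all is precisely the compatibility of the two transverse order-$2$ normal conditions on $\tilde X(v)$, which is where $\Gamma\in\mathcal{G}_+$ is used.
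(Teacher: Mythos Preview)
Your proof is correct and follows essentially the same architecture as the paper's: an edge model built by normal parallel transport (so $(\tilde X|_\gamma)^T=0$ and $\nabla_{n_\gamma}\tilde X=0$ there), a vertex model exploiting the order-$2$ transversality, and a gluing by cutoffs that are constant in the normal directions so their large gradients never enter \eqref{eqn:2nd_error}. The only cosmetic difference is in the vertex model: the paper adds a specific tangential parallel-transported correction to each branch so the two fields agree at the vertex and then invokes the explicit chart formula $U(x,y)=u(x,0)+u(0,y)-u(0,0)$ of Proposition~\ref{prop:extension}, whereas you solve the equivalent intrinsic linear system $\langle Y_v,N_i\rangle=\tilde a_i$ directly; both encode the same linear-algebra observation that two transverse normal conditions at a point are compatible.
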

\begin{proof}
    Since the second variation of length only depends on the normal component of the vector field (see Proposition \ref{prop:variation_formulas}), \eqref{eqn:extension} implies \eqref{eqn:same_2nd}. Hence, it suffices to find a smooth vector field $\tilde{X}$ satisfying \eqref{eqn:extension} and \eqref{eqn:2nd_error}.

    Let $\{x_1, \hdots, x_m\}$ be the self-intersections of $\Gamma$. For $1 \leq j \leq m$, let $\gamma_{0,j},\ \gamma_{1,j} \in \Gamma$ and $s_{0,j},\ s_{1,j} \in S^1$ be the unique choices so that $\gamma_{0,j}(s_{0,j}) = \gamma_{1,j}(s_{1,j}) = x_j$ and either $\gamma_{0,j} \neq \gamma_{1,j}$ or $s_{0,j} \neq s_{1,j}$ (we can do this by the assumption $\Gamma \in \mathcal{G}_{+}$).

    Choose $r > 0$ sufficiently small so that
    \begin{itemize}
        \item $\{B_r(x_j)\}_j$ are pairwise disjoint,
        \item the image of $\Gamma$ in $B_r(x_j)$ consists of two connected geodesic segments intersecting at $x_j$ for all $1 \leq j \leq m$,
        \item for each $1 \leq j \leq m$, there is a diffeomorphism
        \[ \phi_j : B_r(x_j) \to B_r(0) \subset \R^2\]
        so that
        \begin{itemize}
            \item $\phi_j(x_j) = 0$,
            \item $\phi_j(\bigcup_{\gamma \in \Gamma} \gamma(S_1)) = \{xy=0\} \cap B_r(0)\subset \R^2$,
            \item there is a constant $c = c(M, \Gamma) < \infty$ so that $c^{-1} \leq |\nabla \phi_j| \leq c$.
        \end{itemize}
    \end{itemize}
    The existence of $r$ and $\phi_j$ follows from the fact that the exponential map at $x_j$ maps the two intersecting geodesic segments at $x_j$ to two straight lines through the origin, so we can then apply a linear transformation of $\R^2$ to map these lines to the coordinate axes.

    Now take any $\eta \in (0, r)$. We construct a smooth vector field $\tilde{Y}_j$ on $B_\eta(x_j)$ satisfying
    \begin{itemize}
        \item $(\tilde{Y}_j\mid_{\gamma_{k,j}})^{\perp} = X_{\gamma_{k,j}}$ for $k \in \{0, 1\}$,
        \item $|\tilde{Y}_j| + |\nabla \tilde{Y}_j| \leq C$ for some constant $C = C(M, \Gamma, X) < \infty$ (independent of $\eta$).
    \end{itemize}
    Consider in $T_{x_j}M$ the lines
    \[ L_{0,j} = \{X_{\gamma_{0,j}}(s_{0,j}) + t\gamma_{0,j}'(s_{0,j}) \mid t \in \R\},\ \ L_{1,j} = \{X_{\gamma_{1,j}}(s_{1,j}) + t\gamma_{1,j}'(s_{1,j}) \mid t \in \R\}. \]
    Since the self-intersections are transverse, these lines have a unique intersection
    \[ X_{\gamma_{0,j}}(s_{0,j}) + t_{0,j}\gamma_{0,j}'(s_{0,j}) = X_{\gamma_{1,j}}(s_{1,j}) + t_{1,j}\gamma_{1,j}'(s_{1,j}). \]
    see Figure \ref{fig:normExt}.
    \begin{figure}[ht!]
        \centering
        \includegraphics[scale=0.45]{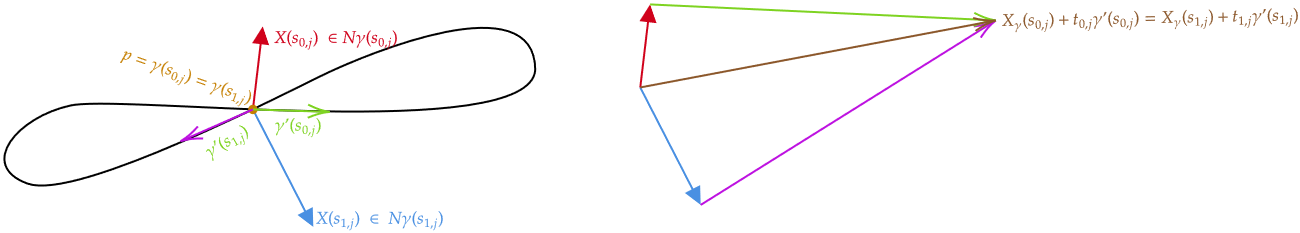}
        \caption{Brown arrow shows the correct choice for the extension of the normal vector field to a vector field on $M$.}
        \label{fig:normExt}
    \end{figure}
    Let $T_{k,j}$ be the tangential vector field along $\gamma_{k,j}$ in $B_\eta(x_j)$ given by parallel transport of $t_{k,j}\gamma_{k,j}'(s_{k,j}) \in T_{x_j}M$ for $k \in \{0, 1\}$. Let $Y_{k,j} = X_{\gamma_{k,j}} + T_{k,j}$ for $k \in \{0,1\}$, which are smooth vector fields satisfying
    \[ Y_{0,j}(s_{0,j}) = Y_{1,j}(s_{1,j}). \]
    Applying the diffeomorphism $\phi_j$, using Proposition \ref{prop:extension}, and then applying $\phi_j^{-1}$, we obtain the desired extension $\tilde{Y}_j$ on $B_\eta(x_j)$.

    Using normal parallel transport of $X$ and smooth cutoffs, we can construct a smooth vector field $\tilde{Z}$ on $M \setminus \bigcup_j B_{\eta/2}(x_j)$ satisyfing
    \begin{itemize}
        \item $(\tilde{Z}\mid_{\gamma})^{\perp} = X_\gamma$ for all $\gamma \in \Gamma$,
        \item $(\tilde{Z}\mid_{\gamma})^T = 0$ for all $\gamma \in \Gamma$,
        \item $\nabla_{n_\gamma}\tilde{Z} = 0$ along $\gamma$ for all $\gamma \in \Gamma$.
    \end{itemize}

    Let $\psi_j : B_\eta(x_j) \to [0, 1]$ be a smooth radial function with support in $B_{7\eta/8}(x_j)$ and $\psi_j\mid_{B_{5\eta/8}(x_j)} \equiv 1$. We define
    \[ \tilde{X} = \sum_{j=1}^m \psi_j\tilde{Y}_j + (1-\psi_j)\tilde{Z}. \]
    Since $\nabla_{n_\gamma}\psi_j \equiv 0$ along $\gamma$ for all $\gamma \in \Gamma$ (by choosing $\psi_j$ radial, we conclude that $\tilde{X}$ satisfies
    \begin{itemize}
        \item $(\tilde{X}\mid_{\gamma})^\perp = X_\gamma$ for all $\gamma \in \Gamma$ (i.e.\ \eqref{eqn:extension} holds),
        \item $|(\tilde{X}\mid_{\gamma})^T| + |\nabla_{n_\gamma}\tilde{X}| \leq C(M, \Gamma, X) < \infty$ (independent of $\eta$) along $\gamma$ for all $\gamma \in \Gamma$,
        \item $|(\tilde{X}\mid_{\gamma})^T| + |\nabla_{n_\gamma}\tilde{X}| \equiv 0$ outside $B_\eta(x_j)$ along $\gamma$ for all $\gamma \in \Gamma$.
    \end{itemize}
    We deduce \eqref{eqn:2nd_error} by taking $\eta$ sufficiently small.
\end{proof}

\begin{restatable}{prop}{prop:extension} \label{prop:extension}
    Suppose we have a map $u : \{(x,y)\in\R^2 \mid xy=0\} \to \R^2$ so that $u(t, 0)$ and $u(0, t)$ are both smooth in $t$. Then there is a smooth map $U : \R^2 \to \R^2$ so that $U\mid_{\{xy=0\}} = u$. Moreover, we have
    \[ |(\nabla U)(x,y)| \leq \max\{|(\partial_xu)(x,0)|, |(\partial_yu)(0,y)|, |(\partial_xu)(0,0)|, |(\partial_yu)(0,0)|\}. \]
\end{restatable}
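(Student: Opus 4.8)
The plan is to avoid any interpolation or partition-of-unity machinery and simply write down the ``decoupled'' extension, which already has all the required properties. Set $a(t):=u(t,0)$ and $b(t):=u(0,t)$; by hypothesis these are smooth curves $\R\to\R^2$, and they agree at the origin, $a(0)=b(0)=u(0,0)=:c$. I would then define
\[ U(x,y):=u(x,0)+u(0,y)-u(0,0)=a(x)+b(y)-c. \]

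First I would check the elementary points. $U$ is smooth on $\R^2$ since it is the sum of a smooth function of $x$ alone and a smooth function of $y$ alone. On the locus $\{xy=0\}$, either $y=0$, in which case $U(x,0)=a(x)+b(0)-c=a(x)=u(x,0)$, or $x=0$, in which case $U(0,y)=a(0)+b(y)-c=b(y)=u(0,y)$; at the origin both expressions reduce to $c=u(0,0)$, so there is no conflict and $U\mid_{\{xy=0\}}=u$. The only structural point here is that subtracting the common base value $u(0,0)$ is exactly what lets $U$ restrict correctly to both axes simultaneously, and this is well-defined precisely because $u(\cdot,0)$ and $u(0,\cdot)$ share the value $u(0,0)$ at the origin.

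For the gradient bound I would use that the partial derivatives decouple: $\partial_x U(x,y)=a'(x)=(\partial_x u)(x,0)$ and $\partial_y U(x,y)=b'(y)=(\partial_y u)(0,y)$, each independent of the other variable. Hence each column of the Jacobian of $U$ at $(x,y)$ equals either $(\partial_x u)(x,0)$ or $(\partial_y u)(0,y)$, which gives
\[ |\nabla U(x,y)|\le\max\{\,|(\partial_x u)(x,0)|,\ |(\partial_y u)(0,y)|\,\}\le\max\{\,|(\partial_x u)(x,0)|,\ |(\partial_y u)(0,y)|,\ |(\partial_x u)(0,0)|,\ |(\partial_y u)(0,0)|\,\}, \]
the last two terms being harmless extra slack that is convenient when this proposition is invoked inside \Cref{lem:extension}. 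I expect no genuine obstacle in this argument: the only thing to be careful about is the bookkeeping at the origin just described, together with the observation that, because this construction makes $\partial_x\partial_y U\equiv 0$, all higher-order derivatives of $U$ are likewise controlled purely in terms of one-variable derivatives of $u$ along the two axes — which is precisely what makes the extension usable in the cut-off construction of \Cref{lem:extension}.
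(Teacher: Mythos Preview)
Your proposal is correct and follows exactly the same approach as the paper: define $U(x,y)=u(x,0)+u(0,y)-u(0,0)$, verify the restriction to the axes, and read off the gradient bound from the decoupled partial derivatives. If anything, you supply more detail on the gradient estimate than the paper does.
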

\begin{proof}
    We define
    \[ U(x, y) = u(x,0) + u(0,y) - u(0,0). \]
    We see
    \[ U(x, 0) = u(x,0) + u(0,0) - u(0,0) = u(x,0) \]
    and
    \[ U(0, y) = u(0,0) + u(0, y) - u(0,0) = u(0,y). \]
    The smoothness of $U$ follows immediately from the smoothness of $u$. The gradient bound follows directly from the formula.
\end{proof}

\subsection{Index bound}

Consider a sequence $\{u_k\}$ of solutions to the Allen--Cahn equation with parameter $\eps_k \to 0$ satisfying
\[ \limsup_k \sup_M |u_k| \leq c_0,\ \ \limsup_k E_{\eps_k}(u_{\eps_k}) \leq E_0,\ \ \limsup_k \mathrm{index}_{E_{\eps_k}}(u_k) \leq p. \]
Suppose further that the associated varifold $V_{u_k}$ converge to a varifold $V$ given by
\[ V = \sum_{\gamma \in \Gamma} m_{\gamma}|\gamma| \]
for some $\Gamma \in \mathcal{G}_+$.

Using the vector field extension from Lemma \ref{lem:extension} in the index bound proof of \cite{gaspar2020index} (see also \cite{hiesmayr2018index,le2011index,le2015index}), we show that $\mathrm{index}(\Gamma) \leq p$.

\begin{theorem}\label{thm:index_general}
    Let $\Gamma \in \mathcal{G}_+$. Suppose
    \[ V = \sum_{\gamma \in \Gamma} m_{\gamma}|\gamma| \]
    for some $m_{\gamma} \in \N$ is the limit of the associated varifolds of a sequence $\{u_k\}$ of solutions to the Allen--Cahn equation with parameter $\eps_k \to 0$ satisfying
    \[ \limsup_k \sup_M |u_k| \leq c_0,\ \ \limsup_k E_{\eps_k}(u_{\eps_k}) \leq E_0,\ \ \limsup_k \mathrm{index}_{E_{\eps_k}}(u_k) \leq p. \]
    Then $\mathrm{index}(\Gamma) \leq p$ and $\#\mathrm{Vert}(\Gamma) \leq p$.
\end{theorem}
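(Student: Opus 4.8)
\emph{Overall strategy.} The plan is to prove the two inequalities $\mathrm{index}(\Gamma)\le p$ and $\#\mathrm{Vert}(\Gamma)\le p$ separately, each by producing, for all large $k$, a subspace of $H^1(M)$ of the appropriate dimension on which the second variation $\delta^2 E_{\eps_k}(u_k)$ is negative definite; since $\limsup_k \mathrm{index}_{E_{\eps_k}}(u_k)\le p$ by hypothesis, both bounds follow at once. I would stress at the outset that the two negative subspaces are built by genuinely different mechanisms and are \emph{not} to be combined — combining them would give the full equation \eqref{CMIndexEq}, which is not being claimed.

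\emph{The index bound.} Set $N=\mathrm{index}(\Gamma)$ and fix $X^{(1)},\dots,X^{(N)}\in\mathcal{X}^{\perp}(\Gamma)$ spanning a subspace on which $Q_\Gamma$ is negative definite, say with all eigenvalues $\le-\lambda<0$. Given a small $\delta>0$, apply \Cref{lem:extension} to each $X^{(i)}$, producing ambient vector fields $\tilde X^{(i)}$ with $(\tilde X^{(i)}|_\gamma)^\perp=X^{(i)}_\gamma$, with $\delta^2\Gamma(\tilde X^{(i)},\tilde X^{(j)})=Q_\Gamma(X^{(i)},X^{(j)})$, and with the tangential parts $(\tilde X^{(i)}|_\gamma)^T$ and the normal derivatives $\nabla_{n_\gamma}\tilde X^{(i)}$ uniformly bounded and supported on a set of $\mathcal{H}^1$-measure $\le\delta$. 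Following \cite{gaspar2020index} (see also \cite{hiesmayr2018index,le2011index,le2015index}), I would transplant $\tilde X^{(i)}$ to the variation $v_k^{(i)}:=\langle\nabla u_k,\tilde X^{(i)}\rangle$ of $u_k$ — equivalently, differentiate $t\mapsto E_{\eps_k}(u_k\circ\Phi^{(i)}_t)$ twice along the flow $\Phi^{(i)}_t$ of $\tilde X^{(i)}$, the first-order term dropping since $u_k$ is critical. Then \cite[Proposition 3.3]{gaspar2020index} yields
\[ h_0^{-1}\,\delta^2 E_{\eps_k}(u_k)\big(v_k^{(i)},v_k^{(j)}\big)\;\longrightarrow\;\delta^2\Gamma(\tilde X^{(i)},\tilde X^{(j)})+\mathcal{E}_{ij}=Q_\Gamma(X^{(i)},X^{(j)})+\mathcal{E}_{ij}, \]
where the error $\mathcal{E}_{ij}$ is controlled by the quantities in \eqref{eqn:2nd_error}, hence $|\mathcal{E}_{ij}|\le C(M,\Gamma,X)\,\delta$ — the point being that this error vanishes identically in the embedded case of \cite{gaspar2020index}. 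Choosing $\delta$ small so that $Q_\Gamma+\mathcal{E}$ is still negative definite on the span, and noting that the $v_k^{(i)}$ are linearly independent in $H^1(M)$ for $k$ large (a nontrivial relation $\sum a_i v_k^{(i)}\equiv0$ would, since the nodal sets of $u_k$ converge to $\Gamma$, force $\sum a_i X^{(i)}$ to vanish on $\Gamma$ in the limit), one concludes $\mathrm{index}_{E_{\eps_k}}(u_k)\ge N$ for large $k$, so $N=\mathrm{index}(\Gamma)\le p$.

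\emph{The vertex bound.} Fix $r_0>0$ with the balls $\{B_{r_0}(v)\}_{v\in\mathrm{Vert}(\Gamma)}$ pairwise disjoint and, for each $v$, $V\restriction B_{r_0}(v)$ equal to a transverse pair of multiplicity-one geodesic segments — possible precisely because $\Gamma\in\mathcal{G}_+$. Such a crossing configuration is unstable for the Allen--Cahn energy; by the blow-up analysis behind \Cref{thm:sine-gordon} and \cite{TonegawaWickramasekera}, after rescaling by $\eps_k$ the $u_k$ near $v$ converge to an entire sine-Gordon Allen--Cahn solution on $\R^2$ with crossing nodal lines, and localizing its unstable direction (as in the stability-theoretic argument of \cite{tonegawa2005stable}) gives, for $k$ large, a function $w_v^k$ with $\mathrm{spt}(w_v^k)\subset B_{r_0}(v)$ and $\delta^2 E_{\eps_k}(u_k)(w_v^k,w_v^k)<0$. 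Since distinct vertices give disjointly supported, hence linearly independent, $w_v^k$, we get $\mathrm{index}_{E_{\eps_k}}(u_k)\ge\#\mathrm{Vert}(\Gamma)$ for large $k$, whence $\#\mathrm{Vert}(\Gamma)\le p$. (Theorem~\ref{thm:index_upper} then follows from this by approximating a general metric by the generic ones of \Cref{GenSelfIntFull} and applying the lower semicontinuity \Cref{lem:index_lower_semi} and \Cref{lem:weighted_vertex_lower_semi}.)

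\emph{Main obstacle.} The crux is the index bound, specifically taming the error term $\mathcal{E}_{ij}$. In the embedded setting one extends a normal variation to an ambient field that is still normal along the hypersurface, so the second-variation comparison has no error. Near a vertex $v$ this is impossible: a single ambient field must push both geodesic branches through $v$ with prescribed normal components $X_{\gamma_0},X_{\gamma_1}$, and transversality forces a definite tangential component at $v$ — the intersection point of the two affine lines $L_0,L_1\subset T_vM$ in \Cref{lem:extension} — which in turn generates $\mathcal{E}_{ij}$. The resolution is localization: \Cref{lem:extension} keeps this tangential/derivative data uniformly bounded while compressing its support into an arbitrarily small neighborhood of $\mathrm{Vert}(\Gamma)$, so its contribution to the error integrals in \cite[Proposition 3.3]{gaspar2020index} tends to $0$. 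The technical heart is matching the error term of that proposition precisely with the quantities in \eqref{eqn:2nd_error} and checking it survives the limit $k\to\infty$ uniformly in the localization parameter.
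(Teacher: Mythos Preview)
Your index-bound argument follows the paper's strategy but contains a genuine gap concerning the multiplicities $m_\gamma$. The limit in \cite[Proposition 3.3]{gaspar2020index} is the second variation of the \emph{limit varifold} $V=\sum_\gamma m_\gamma|\gamma|$, not of $\Gamma$ with unit weights; so the right-hand side of your displayed convergence should read $\delta^2 V(\tilde X^{(i)},\tilde X^{(j)})+\mathcal E_{ij}=Q_V(X^{(i)},X^{(j)})+\mathcal E_{ij}$, where $Q_V(X,Y):=\sum_\gamma m_\gamma Q_\gamma(X_\gamma,Y_\gamma)$. Negative definiteness of $Q_\Gamma=\sum_\gamma Q_\gamma$ on a subspace does \emph{not} in general imply negative definiteness of $Q_V$ there (unequal $m_\gamma$ can flip the sign of a mixed-sign sum). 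The paper handles this by first rescaling: set $\bar X^{(i)}_\gamma = m_\gamma^{-1/2}X^{(i)}_\gamma$, so that $Q_V(\bar X^{(i)},\bar X^{(j)})=Q_\Gamma(X^{(i)},X^{(j)})$, and only then applies \Cref{lem:extension} to the $\bar X^{(i)}$. (An equivalent fix is to choose each $X^{(i)}$ supported on a single $\gamma$, which is possible since $\mathrm{index}(\Gamma)=\sum_\gamma\mathrm{index}(\gamma)$; then the matrix of $Q_V$ is block-diagonal with each block a positive multiple of a negative-definite block of $Q_\Gamma$.) With this correction your argument matches the paper's; your identification of the error term with the quantities in \eqref{eqn:2nd_error} is exactly the point.

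For the vertex bound you take a more constructive route than the paper, and one statement is inaccurate: the crossing at $v$ need not be multiplicity-one, since $\Gamma\in\mathcal G_+$ only says the \emph{support} near $v$ is two transverse arcs, while $V$ still carries the weights $m_\gamma$. The paper avoids any blow-up analysis and instead argues by pigeonhole plus stability: if $\#\mathrm{Vert}(\Gamma)>p$, then since $\mathrm{index}_{E_{\eps_k}}(u_k)\le p$, along a subsequence some fixed vertex ball $B_r(x_{i^*})$ has $u_k$ stable for all $k$; then \cite[Theorem 5]{tonegawa2005stable} and \cite{TonegawaWickramasekera} (adapted to surfaces as in \cite[Appendix B]{guaraco2018min}) force the limit varifold to be smoothly embedded in $B_r(x_{i^*})$, contradicting the presence of a crossing there. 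This is logically the contrapositive of your claim that each ball supports an unstable direction, but it sidesteps the explicit construction of $w_v^k$ and does not require the multiplicity-one assertion.
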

\begin{proof}
    \emph{Index bound}. Let $l = \mathrm{index}(\Gamma)$. Let $\{X^1, \hdots, X^l\} \subset \mathcal{X}^{\perp}(\Gamma)$ be a basis for a subspace of $\mathcal{X}^{\perp}(\Gamma)$ on which $Q_{\Gamma}$ is negative definite.

    Define $\bar{X}^i \in \mathcal{X}^{\perp}(\Gamma)$ by
    \[ \bar{X}^i_{\gamma} = m_{\gamma}^{-1/2}X^i_{\gamma}. \]
    By construction, $\{\bar{X}^1, \hdots, \bar{X}^l\}$ is a basis for a subspace of $\mathcal{X}^{\perp}(\Gamma)$ on which
    \[ Q_V(X, Y) := \sum_{\gamma \in \Gamma} m_{\gamma} Q_\gamma(X_{\gamma}, Y_{\gamma}) \]
    is negative definite. Let $\eta > 0$ satisfy
    \[ \max_{a \in S^{l-1} \subset \R^l} Q_V(a_1\bar{X}^1+\hdots+a_l\bar{X}^l, a_1\bar{X}^1+\hdots+a_l\bar{X}^l) \leq -\eta < 0. \]

    Take $\eps > 0$. Let $\tilde{X}^i$ be the smooth vector field constructed from $\bar{X}^i$ in Lemma \ref{lem:extension} with error $\eps$. By construction, $\{\tilde{X}^1, \hdots, \tilde{X}^l\}$ is a basis for a subspace of the space of smooth vector fields on $M$ on which $\delta^2V$ is negative definite. In particular, by Proposition \ref{prop:variation_formulas}, we have
    \[ \max_{a \in S^{l-1} \subset \R^l} \delta^2V(a_1\tilde{X}^1+\hdots+a_l\tilde{X}^l) \leq -\eta < 0. \]
    
    Let $a \in S^{l-1} \subset \R^l$ and 
    \[ Y = a_1\tilde{X}^1 + \hdots + a_l\tilde{X}^l. \]
    By \cite[Proposition 3.3]{gaspar2020index} and Lemma \ref{lem:extension}, we have
    \begin{align*}
        \frac{1}{2\sigma}\lim_{k \to \infty} \delta^2E_{\eps_k}(u_{\eps_k}, Y)
        & = \delta^2V(Y) + \sum_{\gamma \in \Gamma} m_{\gamma} \int_{\gamma} (\langle \nabla_{n_\gamma} Y, n_\gamma\rangle^2 + R(Y, n_\gamma, n_\gamma, Y))\ d\cH^1\\
        & \leq -\eta + C(M, \Gamma, V)\delta^2.
    \end{align*}
    Taking $\delta^2 < \eta/C(M,\Gamma,V)$, we see that $\delta^2E_{\eps_k}(u_{\eps_k}, \cdot)$ is negative definite on the subspace of smooth vector fields spanned by $\{\tilde{X}^1, \hdots, \tilde{X}^l\}$ for all $k$ sufficiently large. Hence, $l \leq p$, as desired.

    \emph{Vertex bound}. The vertex bound is an immediate consequence of \cite{tonegawa2005stable,TonegawaWickramasekera,guaraco2018min}. Indeed, suppose for contradiction that $\#\mathrm{Vert}(\Gamma) = v > p$. Let $r > 0$ sufficiently small so that $\{B_r(x_i)\}_{i=1}^{v}$ are pairwise disjoint, where $\{x_i\}_{i=1}^{v} = \mathrm{Vert}(\Gamma)$ are the vertices of $\Gamma$. Since $\mathrm{index}_{E_{\eps_k}}(u_k) \leq p$ for all $k$, there is a subsequence (not relabeled) and some $i^* \in \{1, \hdots, v\}$ so that $u_k$ is $E_{\eps_k}$-stable in $B_r(x_{i^*})$ for all $k$. Then by \cite[Theorem 5]{tonegawa2005stable} and \cite[Theorem 2.1]{TonegawaWickramasekera} (with the appropriate modification to ambient Riemannian surfaces as in \cite[Appendix B]{guaraco2018min}), the limit is smoothly embedded in $B_r(x_{i^*})$ (i.e.\ no self-intersections), which yields a contradiction. 
\end{proof}

Equipped with Theorem \ref{thm:index_general}, we can now prove Theorem \ref{thm:index_upper}.

\begin{proof}[Proof of Theorem \ref{thm:index_upper}]
    Since the $p$-widths are continuous in the metric (see \cite[Lemma 2.1]{irie2018density}), the compactness result Lemma \ref{lem:compact_var_met}, the generic metric result Theorem \ref{GenSelfIntFull}, and the lower semicontinuity of index and weighted vertex results Lemmas \ref{lem:index_lower_semi} and \ref{lem:weighted_vertex_lower_semi} imply that it suffices to prove the theorem for metrics satisfying $\mathcal{G}_{\mathrm{prim}} = \mathcal{G}_+$. 
%

    Let $\{\Pi_i\}_i$ be a sequence of Allen--Cahn homotopy classes from $p$-dimensional cubical complexes satisfying $\lim_{i \to \infty} \mathbf{L}_{\mathrm{PT}}(\Pi_i) = \omega_p(M, g)$. Again by the compactness result Lemma \ref{lem:compact_var_met} and the lower semicontinuity of index and weighted vertex results Lemmas \ref{lem:index_lower_semi} and \ref{lem:weighted_vertex_lower_semi}, it suffices to prove the existence of $\Gamma_p^i \in \mathcal{G}_{\mathrm{prim}}$ and positive integers $\{m_{\gamma}\}_{\gamma \in \Gamma_p^i}$ satisfying
    \[ \mathbf{L}_{\mathrm{PT}}(\Pi_i) = \sum_{\gamma \in \Gamma_p^i} m_{\gamma} \mathrm{length}(\gamma), \]
    and
    \[ \mathrm{index}(\Gamma_p^i) \leq p\ \ \text{and}\ \ \sum_{x \in \mathrm{Vert}(\Gamma_p^i)} \binom{\mathrm{ord}_{\Gamma_p^i}(x)}{2} \leq p. \]
    Since we assume $\mathcal{G}_{\mathrm{prim}} = \mathcal{G}_+$, the weighted vertex bound is equivalent to $\#\mathrm{Vert}(\Gamma_p^i) \leq p$.
    
    For each $i$, by \cite[Theorem 1.2]{chodosh2023p}, min-max for the Allen--Cahn energy with sine-Gordon potential produces a sequence $\{u_k^i\}_k$ of solutions to the Allen--Cahn equation with parameter $\eps_k^i \to 0$ satisfying
    \[ \limsup_k \sup_M |u_k^i| \leq c_0,\ \ \limsup_k E_{\eps_k}(u_{\eps_k^i}) \leq E_0,\ \ \limsup_k \mathrm{index}_{E_{\eps_k^i}}(u_k^i) \leq p \]
    and $V_{u_k^i} \rightharpoonup V^i$ for a varifold
    \[ V^i = \sum_{\gamma \in \Gamma_p^i} m_{\gamma} |\gamma| \]
    for some $\Gamma_p^i \in \mathcal{G}_{\mathrm{prim}}$ satisfying
    \[ \sum_{\gamma \in \Gamma_p^i} m_{\gamma} \mathrm{length}(\gamma) = \mathbf{L}_{\mathrm{PT}}(\Pi_i). \]
    By the assumption that $\mathcal{G}_{\mathrm{prim}} = \mathcal{G}_+$, we have $\Gamma_p^i \in \mathcal{G}_+$. Then Theorem \ref{thm:index_general} implies 
    \[
    \mathrm{index}(\Gamma_p^i) \leq p\ \ \text{and}\ \ \#\mathrm{Vert}(\Gamma_p^i) \leq p,
    \]
    as desired. 
\end{proof}
\section{Higher multiplicity} \label{HighMultSec}
In this section, we construct a sequence $\{g_p\}$ of metrics on $S^2$ with the following properties: 
\begin{itemize}
\item for any $p\in\N$, $(S^2,g_p)$ has strictly positive Gauss curvature everywhere, and
\item there is an open neighbourhood $U_p$ of $g_p$ in the space of smooth Riemannian metrics on $S^2$ such that for all $g\in U_p$ the first $p$ widths of the length functional satisfy
\[\omega_l(S^2, g)=l\,\omega_1(S^2, g)\text { for } l=1, \dots, p,\]
and $\omega_l(S^2, g)$ can only be achieved by a closed geodesic $\gamma^g_0$ with multiplicity $l$, for all $l=1, \dots, p$.
\end{itemize}
First we recall a sweepout construction by Guth \cite[Example 2]{guth2009minimax}
\subsection{Guth's sweepout construction} \label{GuthEx}
\begin{figure}[ht!]
\centering
\includegraphics[scale=0.4]{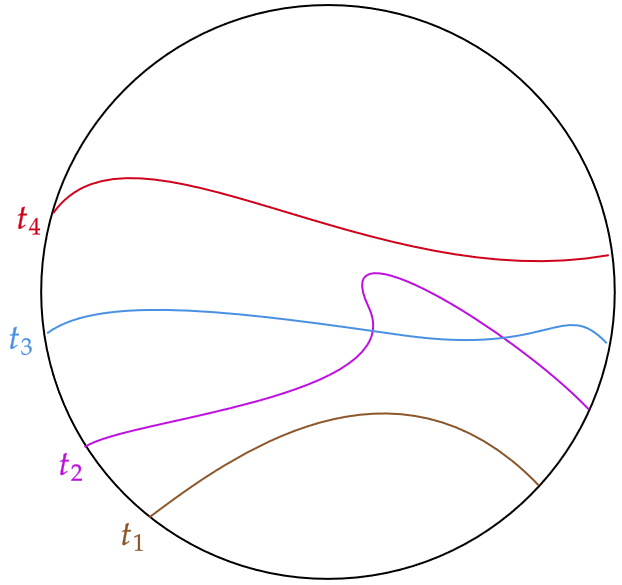}
\caption{Guth's $p$-sweepout construction}
\label{fig:GuthEx}
\end{figure}
Let $\sigma: [0,1] \to \mathcal{Z}_1(M^2, \Z_2)$ be a $1$-sweepout of $M^2$. Then we can construct a $p$-sweepout naturally (see figure \ref{fig:GuthEx}). Define 
\begin{align*}
D_p &= \{(t_1, t_2, \dots, t_p) \; | \; 0 < t_1 < t_2 < \dots < t_p < 1 \}\subset [0,1]^p \\
\sigma_p: & D_p \to \mathcal{Z}_1(M^2, \Z_2) \\
\sigma_p(t_1, \dots, t_p) &= \sum_{i = 1}^p \sigma(t_i)
\end{align*}
%
%
As in \cite[Example 2]{guth2009minimax}, $\sigma_p$ extends to a map on $\overline{\sigma}_p: \overline{D}_p \to \mathcal{Z}_1(M^2, \Z_2)$ and this satisfies the cohomology condition, $\overline{\sigma}_p^*(\lambda^p) \neq 0$ where $\lambda$ is a nontrivial generator of $H^1(\mathcal{Z}_1(M^2, \Z_2))$ (see \S 5 of \cite{guth2009minimax} for details). Thus $\overline{\sigma}_p$ is a $p$-sweepout. Moreover
\[
\sup_{t \in D_p} ||\sigma_p(t)|| \leq p \sup_{t \in [0,1]} ||\sigma(t)||
\]
In particular this demonstrates that 
\begin{equation} \label{pWidthUpper}
\omega_p \leq p \omega_1
\end{equation}
by choosing $\sigma(t)$ such that $\sup_{t \in [0,1]} ||\sigma(t)|| \leq \omega_1 + \eps$ and sending $\eps \to 0$. We note that a similar construction from Dey's proof of Theorem 1 in \cite{dey2023existence} also constructs $p$-sweepouts from $1$-sweepouts and proves \eqref{pWidthUpper}. In essence, one can inductively apply the suspension construction of \cite[\S 4]{dey2023existence} to a 1-sweepout close to $\omega_1$.

\subsection{Construction of elongated metrics}
The construction of the metrics $g_l$ is modelled on the construction in \cite{WangZhou_mult2}, which yields non-bumpy metrics on $S^{n+1}$ ($3\leq n+1\leq 7$) for which the second width of area can only be achieved by an embedded minimal $S^n$ with multiiplicity 2.

Fix $\mu\in[1,+\infty)$, and let $l\in\Z_{>0}$. Consider the sequence $\{M_k\}$ of surfaces in $\R^3$ given by
\begin{equation}\label{eq:M_k}
    M_k=\left\{(x_1,x_2,x_3)\in\R^3\mid x_1^2+x_2^2+\frac{x^{2\mu}_3}{k}=1\right\},
\end{equation} 
each endowed with the metric induced by the embedding into $\R^3$. Let $\gamma_0:S^1\to M_k$ be the parametrized simple curve
\begin{equation}\label{eq:gamma_0}
\gamma_0(\theta)=(\cos\theta, \sin\theta, 0),
\end{equation}
which has image $\gamma_0(S^1)=M_k\cap\{x_3=0\}$.
Note that for each $k\geq 1$, $M_k$ is diffeomorphic to $S^2$, and $\gamma_0$ is a simple closed geodesic in $M_k$. 

The main result of this subsection is to prove \Cref{HigherMultCounter}, which we state in more detail as the following:
\begin{restatable}{thmm}{thm:higher_mult_example} \label{thm:higher_mult_example}
    Fix $\mu\in[1,+\infty)$, and let $p\in\N$. There is $k_p$ depending on $p$ and $\mu$ only such that for all $k\geq k_p$ the following holds: for all $l=1,\dots, p$
    \[\omega_l(M_k)=l\,\mathrm{length}(\gamma_0),\]
    and $\omega_l(M_k)$ can only be achieved by $l|\gamma_0|$, i.e. the stationary 1-varifold induced by $\gamma_0(S^1)$ with multiplicity $l$.
\end{restatable}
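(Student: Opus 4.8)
## Proof Proposal for Theorem (Higher Multiplicity Example)

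The plan is to establish three facts about $M_k$ for $k$ large: (i) an upper bound $\omega_l(M_k) \leq l\,\length(\gamma_0)$ for $l = 1,\dots,p$; (ii) a lower bound showing this is sharp; and (iii) a rigidity statement that the $p$-width can only be achieved by $l|\gamma_0|$. The geometric intuition is that as $k \to \infty$, the surface $M_k$ becomes very "elongated" in the $x_3$ direction (the profile curve $x_1^2 + x_3^{2\mu}/k = 1$ stretches out), so that $\gamma_0$ — the shortest "waist" geodesic — has length $2\pi$, while every other closed geodesic becomes very long. Combined with positive curvature (which needs to be checked: the induced metric on $M_k$ should have $K_{g} > 0$ for a surface of revolution with a strictly convex profile, and one must verify $\mu \in [1,\infty)$ and $k$ large keep this true — possibly only away from the poles, which requires care, or one smooths the construction slightly), this forces the min-max varifolds to concentrate on $\gamma_0$.

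\medskip

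\textbf{Step 1: Upper bound.} First I would exhibit a $1$-sweepout of $M_k$ by level sets $\{x_3 = t\}$ (suitably completed at the poles, as in \cite{guth2009minimax} Example 2) whose maximal length is $\length(\gamma_0) = 2\pi$; this uses that the level curves $M_k \cap \{x_3 = t\}$ are circles of radius $\sqrt{1 - t^{2\mu}/k} \leq 1$, hence have length at most $2\pi$, with equality only at $t=0$. This gives $\omega_1(M_k) \leq 2\pi$. Then Guth's construction from \S\ref{GuthEx} (or Dey's suspension construction \cite{dey2023existence}) upgrades this to $\omega_l(M_k) \leq l\,\omega_1(M_k) \leq 2\pi l$ for all $l$.

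\medskip

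\textbf{Step 2: Lower bound and rigidity via compactness.} This is the main obstacle. By \Cref{CMGeodesicThm}, $\omega_l(M_k)$ is achieved by a union $\sum_j m_{l,j}\length(\sigma_{l,j})$ of closed immersed geodesics. The key geometric input I would need is: \emph{for any $L$, there is $k_L$ so that for $k \geq k_L$ the only closed geodesic in $M_k$ of length $\leq L$ is $\gamma_0$ (of length $2\pi$), and the next shortest closed geodesic has length $> L$.} This should follow from a Clairaut-type analysis of geodesics on the surface of revolution $M_k$: any closed geodesic other than $\gamma_0$ (and the meridians, which are not closed geodesics here since the poles are not smooth points of a geodesic unless... — again a point to check) must oscillate in the $x_3$ direction, and as the surface elongates, such a geodesic is forced to have large length, because to close up it must wind around. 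Alternatively, one invokes the result of \cite{CalabiCao92} (cited in the introduction) that on a positively curved $S^2$ the shortest closed geodesic is simple, together with a direct length estimate: any non-waist closed geodesic has length growing with $k$. Granting this, fix $L = 2\pi p + 1$ and $k \geq k_L$. Then any union of geodesics with total length $\leq 2\pi p$ consists only of copies of $\gamma_0$, i.e.\ is $m|\gamma_0|$ for some $m$. Since $\omega_l \leq 2\pi l < 2\pi(l+1) \leq \length((l+1)\gamma_0)$, and $\omega_l > 0$ is a positive multiple of $2\pi$ (being $m\cdot 2\pi$), we must have $\omega_l(M_k) \in \{2\pi, 4\pi, \dots, 2\pi l\}$ achieved by $m|\gamma_0|$ with $m \leq l$. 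The Lusternik--Schnirelmann / monotonicity argument (as in \cite{WangZhou_mult2}) — using that $\omega_1 < \omega_2 < \cdots$ cannot happen if consecutive widths coincide, combined with the structure of sweepouts — forces $\omega_l = 2\pi l$ with $m = l$ exactly: if $\omega_l = 2\pi m$ with $m < l$, one gets a contradiction with the strict monotonicity of widths coming from the fact that $\gamma_0$ is nondegenerate (using \Cref{lem:Jacobi_fields}-type arguments or the Frankel property, since on a positively curved surface any two closed geodesics intersect, hence there is only one geodesic network and the widths separate).

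\medskip

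\textbf{Step 3: Openness.} Finally, I would note that nondegeneracy of $\gamma_0$ (its Jacobi operator $-\partial_s^2 - K_g$ has no kernel, which holds for generic $k$ or can be arranged) together with continuity of widths in the metric (\cite[Lemma 2.1]{irie2018density}) and compactness of bounded-length geodesics under $C^2$ metric perturbation (\Cref{lem:compact_var_met}, \Cref{lem:Jacobi_fields}) yields an open neighborhood $U_p$ of $g_{k_p}$ on which the same conclusions hold. The hardest part, as flagged, is the explicit geodesic-length lower bound on $M_k$ (Step 2's key input); I would handle it by the Clairaut integral for surfaces of revolution, showing the rotation number per oscillation tends to $0$ as $k \to \infty$, so closing up requires many oscillations and hence large length — with separate attention to whether the poles of $M_k$ are geometrically singular (if so, a preliminary smoothing keeping $K_g > 0$ and keeping $\gamma_0$ the unique short geodesic is needed).
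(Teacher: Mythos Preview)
Your Step 1 (upper bound via level-set sweepout and Guth's construction) matches the paper. Your Step 3 is actually the content of the subsequent theorem on openness, not this one, so it is extraneous here.

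The substantive difference is in Step 2, and there is a genuine gap. You propose to prove via a Clairaut analysis that $\gamma_0$ is the only closed geodesic of length $\leq L$ for $k$ large, and then to pin down $\omega_l = 2\pi l$ exactly by a Lusternik--Schnirelmann/strict-monotonicity argument. The second part does not go through as written: widths are only non-decreasing, not strictly increasing (e.g.\ on the round sphere $\omega_p = 2\pi\lfloor\sqrt{p}\rfloor$), and the LS-type statement ``if $\omega_l = \omega_{l+1}$ then the critical set at that level is infinite'' is not established for curves on surfaces in either the Almgren--Pitts or Allen--Cahn framework. Your appeal to nondegeneracy of $\gamma_0$ or the Frankel property does not supply this; Frankel only says geodesics intersect, not that widths separate. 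So even granting that every achiever is $m|\gamma_0|$ for some $m \leq l$, you have not ruled out $m < l$.

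The paper sidesteps this entirely by using the key fact you omit: $M_k$ converges locally smoothly to the cylinder $S^1(1)\times\R$, and $\omega_p(S^1(1)\times\R) = 2\pi p$ (Lemma~\ref{lem:M_k_prop}(2)). Combined with $\omega_p(M_k) \leq 2\pi p$ and the fact that any achiever must be an integer multiple of $|\gamma_0|$, this forces $\omega_p(M_k) = 2\pi p$ for $k$ large. The rigidity (why the achiever has image exactly $\gamma_0(S^1)$) is also handled differently: rather than a direct Clairaut bound, the paper passes a sequence $\Gamma_k$ of achievers to a subsequential limit in the cylinder, classifies closed geodesics there as horizontal circles, uses the mean-convex foliation (Lemma~\ref{lem:M_k_prop}(4),(5)) to force every $\gamma_k^j$ to meet $\gamma_0$, hence the limit circle is $\gamma_0$ itself, and then applies Lemma~\ref{lem:Jacobi_fields} to show that any $\gamma_k^j \neq \gamma_0$ would produce a nontrivial Jacobi field on $\gamma_0$ in the flat cylinder --- but such Jacobi fields are constant, forcing $\gamma_k^j$ to one side of $\gamma_0$, contradicting the intersection. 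Your Clairaut route to ``$\gamma_0$ is the only short geodesic'' is plausible and would accomplish the same rigidity, but the paper's compactness-in-the-limit argument is softer and avoids the explicit rotation-number computation you flag as the hardest part.
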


First, let us list some properties of the surfaces $M_k$ which will be useful in the proof of Theorem \ref{thm:higher_mult_example}.
\begin{restatable}{lemm}{lem:M_k_prop} \label{lem:M_k_prop}
    Let $k\geq 1$, and let $M_k$ be defined as in \eqref{eq:M_k}, endowed with the metric induced by the embedding into $\R^3$. Then, the following properties hold:
    \begin{enumerate}
        \item If $\mu=1$, $K_{M_k}>0$. Moreover, $K_{M_k}=k^{-1}>0$ on $\gamma_0(S^1)=M_k\cap\{x_3=0\}$.  \\
        If $\mu\in(1,+\infty)$, then $K_{M_k}\geq 0$, and, for $x\in M_k$ 
        \begin{itemize}
            \item $K_{M_k}(x)=0$ if and only if $x\in M_k\cap\{x_3=0\}$,
            \item $K_{M_k}(x)>0$ otherwise.
        \end{itemize}
        \item $M_k$ converges locally smoothly to the cylinder $S^1(1)\times\R\subset\R^2\times \R=\R^3$ as $k\to\infty$. Therefore,  for any $p\in\Z_{>0}$,
        \begin{equation}\label{eq:width_convergence}
            \omega_p(M_k)\rightarrow\omega_p(S^1(1)\times\R)=2\pi p
        \end{equation}
        as $k\to\infty$.
        \item $\gamma_0$ is 
        \begin{itemize}
            \item a simple closed geodesic of index 1 in $M_k$, if $\mu=1$;
            \item a degenerate stable simple closed geodesic in $M_k$, $\mu\in(1,+\infty)$.
        \end{itemize}
        \item Each connected component of $M_k\setminus\{x_3=0\}$ is foliated by the simple closed curves $M_k\cap\{|x_3|=c\}$ for $0<c<k^{1/{2\mu}}$. For each $0<c<k^{1/{2\mu}}$, the simple closed curves $M_k\cap\{|x_3|=c\}$ have everywhere nonzero curvature vector, pointing towards $\{x_3=0\}$. Therefore, for each $0<c<k^{1/{2\mu}}$, there can be no closed geodesics entirely contained in $\{x_3\geq c\}$ or $\{x_3\leq -c\}$.
        \item Every closed geodesic in $M_k$ intersects $\gamma_0(S^1)$.
    \end{enumerate}
\end{restatable}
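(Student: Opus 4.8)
The plan is to treat $M_k$ as the surface of revolution about the $x_3$-axis with profile radius $r(z)=(1-|z|^{2\mu}/k)^{1/2}$ for $|z|\le k^{1/(2\mu)}$, which is invariant under all rotations about the axis and under the reflection $x_3\mapsto -x_3$; in particular $\gamma_0(S^1)=M_k\cap\{x_3=0\}$ is the fixed-point set of an isometric involution, hence a closed geodesic, of length $2\pi$. With this picture, items (1) and (3) are short computations. For (1), using the surface-of-revolution formula $K=-r''\big(r(1+(r')^2)^2\big)^{-1}$ and differentiating $r^2=1-|z|^{2\mu}/k$ twice gives $r''=-\mu(2\mu-1)|z|^{2\mu-2}/(kr)-\mu^2|z|^{4\mu-2}/(k^2r^3)$; for $\mu=1$ this is $<0$ for every admissible $z$ (so $K_{M_k}>0$, and $r'(0)=0$, $r''(0)=-1/k$ give $K_{M_k}=1/k$ on $\gamma_0(S^1)$), while for $\mu\in(1,+\infty)$ it is $<0$ for $z\ne0$ and $r''(0)=0$ (so $K_{M_k}>0$ off the equator and $K_{M_k}=0$ exactly on $M_k\cap\{x_3=0\}$). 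For (3), since $K$ is constant along $\gamma_0$, Proposition~\ref{prop:variation_formulas} gives the second variation $Q_{\gamma_0}(\phi)=\int_0^{2\pi}\big((\phi')^2-K|_{\gamma_0}\phi^2\big)\,ds$; expanding $\phi$ in the Laplace eigenbasis of the circle of length $2\pi$ (eigenvalues $n^2$, $n\ge0$), the constant mode is the unique negative direction when $\mu=1$, so $\mathrm{index}(\gamma_0)=1$ and $\gamma_0$ is nondegenerate once $k\ge2$, whereas when $\mu>1$ we get $Q_{\gamma_0}\ge0$ with the constants spanning its kernel, i.e.\ $\gamma_0$ is degenerate stable.

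For item (2), local smooth convergence $M_k\to S^1(1)\times\R$ is immediate: on any slab $\{|x_3|\le R\}$ the relation $x_1^2+x_2^2=1-|x_3|^{2\mu}/k$ converges to $x_1^2+x_2^2=1$ in $C^\infty$, and one writes $M_k$ there as a $C^\infty$-small normal graph over the cylinder. The upper bound $\omega_p(M_k)\le 2\pi p$ follows from Guth's $p$-sweepout construction of \S\ref{GuthEx} applied to the $1$-sweepout of $M_k$ by the horizontal circles $M_k\cap\{x_3=c\}$, each of length $2\pi\sqrt{1-|c|^{2\mu}/k}\le2\pi$. The matching lower bound $\liminf_k\omega_p(M_k)\ge2\pi p$ is obtained as in \cite{WangZhou_mult2}: one applies a Lusternik--Schnirelmann type superadditivity inequality for $p$-widths to $p$ pairwise disjoint cylindrical bands $M_k\cap\{a_i\le x_3\le a_i+1\}$, each of which converges smoothly to a flat cylinder $S^1(1)\times[0,1]$ whose relative $1$-width is $2\pi$; since one checks directly that $\omega_p(S^1(1)\times\R)=2\pi p$, \eqref{eq:width_convergence} follows. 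I expect this lower bound to be the one genuinely substantive point of the lemma; the surface setting is what lets us avoid the ``reverse catenoid'' estimate needed in \cite{WangZhou_mult2}.

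For items (4) and (5), fix $0<c<k^{1/(2\mu)}$; the latitude circle $\Lambda_c:=M_k\cap\{x_3=c\}$ is a smooth embedded circle, and a direct computation of the covariant acceleration shows that the $x_3$-component of $-\nabla_TT$ along $\Lambda_c$ (with $T$ the unit tangent) equals $r'(c)\big(r(c)(1+r'(c)^2)\big)^{-1}$, which has the sign of $r'(c)<0$; hence the curvature vector of $\Lambda_c$ is everywhere nonzero and points toward $\{x_3=0\}$, and these circles foliate the component $\{x_3>0\}$ of $M_k\setminus\{x_3=0\}$ away from the pole (symmetrically for $\{x_3<0\}$). If $\sigma$ were a closed geodesic contained in $\{x_3\ge c\}$ for some $c>0$, choose $q\in\sigma$ minimizing $x_3|_\sigma$, with value $c_*\in[c,k^{1/(2\mu)})$; then $\sigma$ lies in $\{x_3\ge c_*\}$ near $q$ and is tangent there to the boundary curve $\Lambda_{c_*}$, whose nonzero curvature vector points out of $\{x_3\ge c_*\}$, contradicting the strong maximum principle for a geodesic touching a strictly curved barrier. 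This proves the ``therefore'' in (4), and (5) follows: a closed geodesic disjoint from $\gamma_0(S^1)=M_k\cap\{x_3=0\}$ would, by connectedness, lie in one component of $M_k\setminus\{x_3=0\}$, hence, being compact, in some $\{x_3\ge c\}$ or $\{x_3\le-c\}$ with $c>0$, which is impossible.

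The remaining bookkeeping is the role of the pole: every geodesic through the north pole is, by invariance under reflection in the plane through the axis containing its tangent direction, one of the meridian geodesics $M_k\cap P$, which already meets $\{x_3=0\}$, so no closed geodesic reaching the pole can lie in $\{x_3\ge c\}$ either, and one must keep careful track of the sign convention $\kappa=-\nabla_TT$ throughout the curvature-vector computation in (4). Apart from the Lusternik--Schnirelmann lower bound in (2), which is where we follow \cite{WangZhou_mult2} most closely, the other four items are elementary and self-contained.
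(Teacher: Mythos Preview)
Your proposal is correct and follows exactly the approach the paper gestures at: the paper's own proof simply asserts that (1) is ``clear from the definition,'' (2) is ``well known'' with a pointer to \cite{WangZhou_mult2} and \cite{song2018existence}, (3) ``follows directly from the second variation formula,'' (4) ``follows from direct calculation and the maximum principle,'' and (5) follows from (4). Your write-up actually carries out these computations (the surface-of-revolution curvature formula, the spectrum of the Jacobi operator on $\gamma_0$, the sign of the curvature vector of the parallels, and the barrier argument), so it is strictly more informative than the paper's proof while taking the same route.
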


\begin{proof}
    Property (1) and the fact that $\gamma_0$ is a simple closed geodesic are clear from the definition of $M_k$. Property (2) is well known but see (\cite[\S2]{WangZhou_mult2} or a similar argument in \cite[Lemma 6]{song2018existence}).
    Property (3) follows directly from the second variation formula in Proposition \ref{prop:variation_formulas}. Property (4) follows from direct calculation and the maximum principle.
    Finally, property (5) follows from (4).
\end{proof}

We shall now prove Theorem \ref{thm:higher_mult_example}.
\begin{proof}[Proof of Theorem \ref{thm:higher_mult_example}]
First of all, note that for every $k\geq 1$, we have $\omega_1(M_k)=\length(\gamma_0)=2\pi$. Applying Guth's $p$-sweepout construction from \S \ref{GuthEx}, we have that for every $p\geq 1$
\[\omega_p(M_k)\leq p\,\omega_1(M_k)=2\pi p.\]
Moreover, by Lemma \ref{lem:M_k_prop}, $\lim_{k\to\infty}\omega_p(M_k)=2\pi p$.

Now let $p\geq 2$ be fixed and assume there is a sequence $\{\Gamma_k\}_{k\geq 1}\subset\mathcal{G}(M_k)$ such that:
\begin{enumerate}[label=(\roman*)]
    \item $\Gamma_k$ achieves $\omega_p(M_k)$, so 
    \[\length(\Gamma_k)=\omega_p(M_k);\]
    \item the sequence of stationary 1-varifolds $\{|\Gamma_k|\}_{k\geq 1}$ is not eventually constant and equal to $p|\gamma_0|$. 
\end{enumerate}
Without loss of generality, we can assume that $\Gamma_k\in\mathcal{G}_{\mathrm{prim}}(M_k)$ for all $k$. 
We shall now show that (ii) leads to a contradiction.
Since $\length(\Gamma_k)\leq 2\pi p$ for all $k$, 
we can apply Lemma \ref{lem:compact_var_met} and extract a smooth subsequential limit $\Gamma=\{\gamma^1, \dots, \gamma^N\}\in\mathcal{G}(S^1(1)\times\R)$. Therefore, along a converging subsequence (which we shall not relabel), for all sufficiently large $k$, we have $\Gamma_k=\{\gamma_k^1,\dots, \gamma_k^N\}$, and $\gamma_k^j\to\gamma^j$ smoothly for all $j=1,\dots, N$.
However, it is easy to see that if $\sigma:S^1\to S^1(1)\times\R$ is a geodesic loop in $S^1(1)\times\R$, then $\sigma(S^1)=S^1(1)\times\{t\}$ for some $t\in\R$. Therefore, for each $j=1,\dots, N$, $\gamma^j(S^1)=S^1(1)\times\{t_j\}$ for some $t_j\in\R$. 

By Lemma \ref{lem:M_k_prop}, $\gamma_k^j(S^1)$ intersects $\gamma_0(S^1)$ for all $j,k$, and, since $\gamma_k^j\to\gamma^j$, it must be the case that $t_j=0$, i.e. $\gamma^j(S^1)=\gamma_0(S^1)$ for all $j=1, \dots, N$. 

In particular, we have shown that for all $j=1,\dots, N$, $\gamma^j_k(S^1)$ converges to $\gamma_0(S^1)$ with some multiplicity.

Since, by (ii), the sequence of stationary 1-varifolds $\{|\Gamma_k|\}_{k\geq 1}$ is not eventually constant, there exist $j\in\{1,\dots, N\}$ and a further subsequence (again, not relabelled) along which $\gamma_k^j(S^1)\neq \gamma_0(S^1)$. Note that since $\gamma_k^j(S^1)\cap \gamma_0(S^1)\neq \emptyset$, $\gamma_k^j(S^1)$ must intersect $\gamma_0(S^1)$ transversely for all $k$. By Lemma \ref{lem:Jacobi_fields}, $\gamma_0$ admits a nontrivial Jacobi field with respect to the limit metric on $S^1(1)\times\R$. However, by a direct application of the second variation formula in \ref{prop:variation_formulas}, one can easily check all such Jacobi fields are easily seen to be constant. Hence, for all sufficiently large $k$, $\gamma_k^j(S^1)$ must lie to one side of $\gamma_0(S^1)$, which contradicts the previous statement that they must intersect transversely for all sufficiently large $k$.

We have thus shown that, if $\Gamma_k$ achieves $\omega_p(M_k)$, then the sequence of stationary 1-varifolds $\{|\Gamma_k|\}_{k\geq 1}$ must eventually be constant and equal to $p|\gamma_0|$, concluding the proof of Theorem \ref{thm:higher_mult_example}.
\end{proof}

\begin{remark}
By choosing $\mu=1$ or $\mu>1$, Theorem \ref{thm:higher_mult_example} provides examples where multiplicity must occur for both stable and unstable geodesics realising a min-max width for the length functional. When $\mu = 1$, we also note that $K > 0$, yet $\omega_p = p \omega_1$ for any $p$ finite (assuming $k$ sufficiently large). This contrasts with the work of Haslhofer--Ketover \cite{haslhofer2019minimal}, where it was shown that, in the setting of minimal $S^2$ in $S^3$, $\Ric > 0$ implies $\omega_2 < 2 \omega_1$. 
\end{remark}

\begin{remark}
Again, by \S \ref{GuthEx}, we know that $\omega_p \leq p \omega_1$, however Theorem \ref{thm:higher_mult_example} provides manifolds for which the above bound is sharp, in the sense that it can be saturated for all $p \leq p_0$ for any $p_0$ finite.
\end{remark}

\subsection{Extension to an open neighbourhood} 
We now verify \Cref{HigherMultCounter}. Let us now fix $\mu=1$. For each $p\in\N$, let $g_p$ be the smooth Riemannian metric on $S^2$ induced by the embedding \eqref{eq:M_k} of $M_{k_p}$ into $\R^3$, where $k_p$ is as in Theorem \ref{thm:higher_mult_example}, and let $\gamma_0=\gamma_0^{g_p}$ be the unique simple closed geodesic in $(S^2, g_p)$ corresponding to \eqref{eq:gamma_0}. 

\begin{remark}\label{rmk:nondegenerate}
Note that by direct computation of the Jacobi operator on $\gamma_0$, and by replacing $k_p$ with $k_p+1$ if needed, we can assume that $\gamma_0$ and its iterates are non-degenerate geodesics with respect to $g_p$, i.e. they admits no nontrivial Jacobi fields.

Indeed, by Proposition \ref{prop:variation_formulas}, the second variation of length of a parametrized geodesic loop $\gamma$ in the direction of a vector field $X$ along $\gamma$ only depends on the normal projection of $X$ along $\gamma$. Therefore, by restricting to normal vector fields $X=\phi\nu$, where $\nu$ is a choice of unit normal field along $\gamma$, the Jacobi operator $J_{\gamma, g_p}$ is given by the scalar operator
\[J_{\gamma, g_p}(\phi)=\phi''+K_{g_p}(x)\phi.\]
If $\gamma(S^1)=\gamma_0(S^1)$, then $K_{g_p}=k_p^{-1}$ along the image of $\gamma$. Therefore, the equation 
\[\phi''+k_p^{-1}\phi=0\]
has no nontrivial periodic solutions $\phi:[0,\length(\gamma)]\to\R$ provided $k_p^{-1/2}\,\length(\gamma)\notin\pi\Z$, which clearly holds if $k_p^{1/2}\notin\Z$.
\end{remark}

The main result of this subsection is the following theorem.

\begin{restatable}{thmm}{thm:higher_mult_nbhd} \label{thm:higher_mult_nbhd}
For each $\in\N$, there is an open neighbourhood $U_p$ of $g_p$ in the space of smooth Riemannian metrics on $S^2$ such that for all $g\in U_p$ the first $p$ widths of the length functional satisfy
\[\omega_l(S^2, g)=l\,\omega_1(S^2, g)\text { for } l=1, \dots, p,\]
and $\omega_l(S^2, g)$ can only be achieved by $l\,|\gamma^g_0|$, i.e. the stationary 1-varifold induced by the image $\gamma^g_0(S^1)$ of a simple closed geodesic $\gamma^g_0$ in $(S^2,g)$ with multiplicity $l$.
\end{restatable}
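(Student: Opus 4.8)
The plan is to upgrade the single-metric statement of Theorem~\ref{thm:higher_mult_example} to an open neighbourhood by a compactness-contradiction argument, exploiting the nondegeneracy built into Remark~\ref{rmk:nondegenerate} together with the continuity of the widths in the metric. Suppose for contradiction that no such neighbourhood $U_p$ exists; then there is a sequence of metrics $g_i \xrightarrow{C^\infty} g_p$ and, for each $i$, some index $l_i \in \{1,\dots,p\}$ so that either $\omega_{l_i}(S^2,g_i) \neq l_i\,\omega_1(S^2,g_i)$, or $\omega_{l_i}(S^2,g_i)$ is achieved by some $\Gamma_i \in \mathcal{G}(S^2,g_i)$ with $|\Gamma_i|\neq l_i\,|\gamma_0^{g_i}|$. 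Passing to a subsequence we may assume $l_i \equiv l$ is fixed. The first alternative is immediately handled: by \cite[Lemma 2.1]{irie2018density} both $\omega_1$ and $\omega_l$ are continuous in the $C^0$ topology, and $\omega_1(S^2,g_p)$ is achieved uniquely by $|\gamma_0|$ with $\omega_l(S^2,g_p) = l\,\omega_1(S^2,g_p)$ by Theorem~\ref{thm:higher_mult_example}; combined with Guth's bound $\omega_l \leq l\,\omega_1$ from \S\ref{GuthEx}, which forces $\omega_l(S^2,g) = l\,\omega_1(S^2,g)$ whenever $\omega_l(S^2,g)$ is close enough to $l\,\omega_1(S^2,g)$ — actually one needs a short argument here, since continuity only gives $\omega_l \to l\omega_1$, so one instead argues directly with the representing varifolds below and derives the equality as a byproduct.

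So the heart of the matter is the second alternative: a sequence $\Gamma_i$ of stationary geodesic unions (without loss of generality primitive, by Remark~\ref{rem:prim}) achieving $\omega_l(S^2,g_i)$, with $\mathrm{length}_{g_i}(\Gamma_i) = \omega_l(S^2,g_i) \leq l\,\omega_1(S^2,g_i) \to 2\pi l$, yet $|\Gamma_i| \neq l\,|\gamma_0^{g_i}|$. By the uniform length bound $\mathrm{length}_{g_i}(\Gamma_i) \leq 2\pi l + 1$ for large $i$, Lemma~\ref{lem:compact_var_met} gives a subsequential smooth graphical limit $\Gamma = \{\gamma^1,\dots,\gamma^N\}$, a union of closed geodesics in $(S^2,g_p)$, with $\Gamma_i = \{\gamma_i^1,\dots,\gamma_i^N\}$ and $\gamma_i^j \to \gamma^j$ for each $j$. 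Each $\gamma^j$ has length at least the injectivity radius, and the total length of $\Gamma$ (counted with the multiplicities $m_j$ with which the $\gamma_i^j$ may collapse onto the $\gamma^j$) is at most $2\pi l$. Now I invoke Lemma~\ref{lem:M_k_prop}(5): every closed geodesic in $(S^2,g_p) = M_{k_p}$ intersects $\gamma_0(S^1)$, and from Lemma~\ref{lem:M_k_prop}(4) no closed geodesic is confined to one side of $\gamma_0$; moreover a direct second-variation/ODE analysis (as in the proof of Theorem~\ref{thm:higher_mult_example}, using that $K_{g_p} = k_p^{-1}$ along $\gamma_0$ and $k_p^{1/2}\notin\Z$) shows $\gamma_0$ is the \emph{only} closed geodesic in $M_{k_p}$ of length $\leq 2\pi l$, because any other closed geodesic, having to cross $\{x_3 = 0\}$, would be forced to have length strictly larger once $k_p$ is large — indeed the cylinder-limit argument shows the only short geodesics near the equator are multiples of $\gamma_0$, and a separate standing argument rules out long excursions. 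Hence every $\gamma^j(S^1) = \gamma_0(S^1)$, so $\Gamma = \{\gamma_0\}$ as a primitive collection ($N=1$) and $\mathrm{length}_{g_p}(\gamma_0) = 2\pi$ forces the collapse multiplicity $m$ to satisfy $2\pi m \le 2\pi l$, and matching the mass $\omega_l(S^2,g_i) \to 2\pi l$ forces $m = l$; thus $|\Gamma_i| \to l\,|\gamma_0|$.

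To finish, I must promote this varifold convergence to the \emph{exact} equality $|\Gamma_i| = l\,|\gamma_0^{g_i}|$ for large $i$, which is where nondegeneracy enters decisively. Since $\gamma_i^1(S^1) \to \gamma_0(S^1)$ graphically with multiplicity $l$: if $\gamma_i^1$ is primitive and converges to $\gamma_0$ traversed $l\ge 2$ times, Corollary~\ref{GeoMultCor} produces a nontrivial Jacobi field on $\gamma_0$ (or its double cover), contradicting Remark~\ref{rmk:nondegenerate}; if instead $\Gamma_i$ consists of several primitive geodesics each converging to $\gamma_0$ (possibly with multiplicity), any two distinct ones yield a nontrivial Jacobi field by Lemma~\ref{lem:Jacobi_fields}, again a contradiction, and any single one converging with multiplicity $\ge 2$ is ruled out by Corollary~\ref{GeoMultCor}. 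Therefore for all large $i$, $\Gamma_i$ must consist of exactly one primitive geodesic $\gamma_i^1$ converging to $\gamma_0$ with multiplicity one — i.e.\ $\gamma_i^1 = \gamma_0^{g_i}$ up to reparametrization — carried with integer multiplicity $m_i$, and the mass identity $m_i\,\mathrm{length}_{g_i}(\gamma_0^{g_i}) = \omega_l(S^2,g_i)$ together with $\omega_l(S^2,g_i) \le l\,\omega_1(S^2,g_i) = l\,\mathrm{length}_{g_i}(\gamma_0^{g_i})$ and $\omega_l(S^2,g_i) \to 2\pi l$ forces $m_i = l$ and hence $\omega_l(S^2,g_i) = l\,\omega_1(S^2,g_i)$. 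This contradicts the choice of $g_i$, completing the proof. The main obstacle is the step asserting $\gamma_0$ is the unique short closed geodesic in $M_{k_p}$: this needs care because a priori short geodesics could wander far from the equator, and one resolves it by combining the curvature-vector/foliation information of Lemma~\ref{lem:M_k_prop}(4) with the local-smooth convergence to the cylinder from Lemma~\ref{lem:M_k_prop}(2), exactly as in the contradiction step inside the proof of Theorem~\ref{thm:higher_mult_example}, so that the same $k_p$ (or $k_p+1$) works.
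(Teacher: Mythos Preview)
Your overall strategy---contradiction, compactness via Lemma~\ref{lem:compact_var_met}, then nondegeneracy via Lemma~\ref{lem:Jacobi_fields} and Corollary~\ref{GeoMultCor}---coincides with the paper's. The one place you work harder than necessary is in identifying the limit $\Gamma$: you try to establish that $\gamma_0$ is the \emph{only} closed geodesic in $M_{k_p}$ of length $\leq 2\pi l$, and you yourself flag this as ``the main obstacle.'' The paper bypasses this entirely. Since $\mathrm{length}_{g_i}(\Gamma_i) = \omega_l(S^2,g_i) \to \omega_l(S^2,g_p)$ by continuity of the widths, the limit $\Gamma$ \emph{achieves} $\omega_l(S^2,g_p)$, and Theorem~\ref{thm:higher_mult_example} (already established for $g_p = M_{k_p}$) then gives $|\Gamma| = l\,|\gamma_0|$ directly---no classification of all short geodesics is needed. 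Your detour can be made rigorous (it amounts to rerunning the $k\to\infty$ argument of Theorem~\ref{thm:higher_mult_example} for arbitrary bounded-length geodesics rather than just width-achievers, possibly at the cost of enlarging $k_p$), so there is no fatal gap, but it is an avoidable complication.

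A secondary point: the paper explicitly invokes Calabi--Cao \cite{CalabiCao92} (using $K_{g_i}>0$ for large $i$) to know that $\omega_1(S^2,g_i)$ is achieved by a \emph{simple} closed geodesic $\gamma_0^{g_i}$, and then shows $\gamma_0^{g_i}\to\gamma_0$. You instead define $\gamma_0^{g_i}$ only implicitly as the unique primitive geodesic near $\gamma_0$ guaranteed by nondegeneracy. The two agree, but the Calabi--Cao route makes the identity $\omega_1(S^2,g_i) = \mathrm{length}_{g_i}(\gamma_0^{g_i})$ immediate rather than a byproduct of the $l=1$ case of your contradiction argument.
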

\begin{proof}
Fix $p\in\N$ and let $\{g^i\}_{i\in\N}$ be a sequence of smooth Riemannian metrics on $S^2$ converging smoothly to $g_p$. By Property (1) in Lemma \ref{lem:M_k_prop}, for all large enough $i$, the Gauss curvature $K_{g^i}$ of $(S^2,g^i)$ is strictly positive everywhere. Therefore, by \cite{CalabiCao92}, the first width $\omega_1(S^2, g^i)$ of length is achieved by a simple closed geodesic $\gamma_0^{g_i}$. Note that $\lim_{i\to\infty}\omega_1(S^2, g^i)=\omega_1(S^2,g_p)=\length(\gamma_0)$.
By Theorem \ref{thm:compactness}, up to a subsequence, $\{\gamma_0^{g^i}\}_i$ converges to a geodesic on $(S^2,g_p)$ achieving $\omega_1(S^2,g_p)$. Hence, by Theorem \ref{thm:higher_mult_example}, up to a subsequence $\gamma_0^{g^i}\to\gamma_0$ smoothly. 
Now consider $l\in\{2,\dots, p\}$. Note that $\omega_l(S^2,g^i)\to\omega_l(S^2,g_p)$. Let $\Gamma^i\in\mathcal{G}(S^2,g^i)$ achieve $\omega_l(S^2,g^i)$, so that
\[\omega_l(S^2,g^i)=\length(\Gamma^i).\] 
By Theorem \ref{thm:compactness}, up to a subsequence, 
\begin{equation}\label{eq:conv1}
\Gamma^i\to\Gamma\in\mathcal{G}(S^2, g_p),
\end{equation}
where $\Gamma$ achieves $\omega_l(S^2,g_p)$. By Theorem \ref{thm:higher_mult_example}, as a 1-varifold 
\begin{equation}\label{eq:conv2}
|\Gamma|=l\,|\gamma_0|.
\end{equation}
If $|\Gamma^i|$ is not of the form $l\,|\gamma^{g^i}|$ for some closed geodesic $\gamma^{g_i}$ for all sufficiently large $i$, then we can find a sequence $\{(\gamma_1^i,\gamma^i_2)\}$ of pairs of elements of $\Gamma^i$ such that $\gamma_1^i(S^1)\neq\gamma_2^i(S^1)$. Because of \eqref{eq:conv1} and \eqref{eq:conv2}, Lemma \ref{lem:Jacobi_fields} implies that there is a nontrivial $g_p$-Jacobi field on some iterate of $\gamma_0$, but this is a contradiction (see Remark \ref{rmk:nondegenerate}). Therefore, $|\Gamma^i|$ must be of the form $p\,|\gamma^{g^i}|$ for some closed geodesic $\gamma^{g_i}$ converging to $\gamma_0$, for all sufficiently large $i$. Moreover, if $\gamma^{g^i}(S^1)\neq\gamma_0^{g^i}(S^1)$, then the same argument again produces a nontrivial $g_p$-Jacobi field on an iterate of $\gamma_0$. Hence, we must have
\[|\Gamma^i|=l\,|\gamma_0^{g^i}|\]
for all sufficiently large $i$.
Since this property holds for all sequences $\{g^i\}_{i\in\N}$ of smooth Riemannian metrics on $S^2$ converging smoothly to $g_p$, this concludes the proof of the Theorem.
\end{proof}

We now prove \Cref{rptwoCorr}.
\rptwoCorr*
\begin{proof}
The proof is analogous to the proofs of \Cref{thm:higher_mult_example} and Theorem \ref{thm:higher_mult_nbhd} after quotienting by the antipodal map. We sketch the details
\begin{itemize}
    \item Let $N_k = M_k / \{(x,y,z) \sim (-x,-y,-z)\}$ with the induced metric, $\overline{g}_k$. Let $\overline{\gamma}_0$ correspond to the one-sided geodesic at $z = 0$.
    \item As $k \to \infty$, we see that 
    \begin{equation} \label{quotientWidths}
    \lim_{k \to \infty} \omega_p(N_k) = \omega_p((S^1 \times \R) / \{z \sim -z\}) = 2 \pi p
    \end{equation}
    This follows by noting that $(S^1 \times \R) / \{p \sim -p\} \supseteq S^1 \times (0, \infty)$ and applying the same argument as in \cite[\S 2]{WangZhou_mult2} or \cite[Lemma 6]{song2018existence}.
    %
    \item Let $\{\gamma_{i,k,p}\}$ be the geodesics such that 
    \[
    \omega_p(N_k) = \sum_{i = 1}^{N_{p,k}} m_{i,k,p} \cdot \mathrm{length}(\gamma_{i,k,p})
    \]
    Since the slices $\{z = t\} \sim \{z = -t\}$ are still mean convex, we argue that $\{\gamma_{i,k,p}\}$ must intersect the slice $\{z = 0\}  = \overline{\gamma}_0$. We similarly show that $\gamma_{i,k,p} \neq \overline{\gamma}_0$ for some $i$ would lead to the presence of a Jacobi field for $k$ sufficiently large. 
    \item Because $\mathrm{length}(\overline{\gamma}_0) = \pi$ and the widths converge for fixed $p$ via equation \eqref{quotientWidths}, we get multiplicity $2p$ exactly for $p$ fixed and all $k$ sufficiently large. This proves that 
    \[
    \omega_{l}(N_k) = \pi (2l)
    \]
    for all $l \leq p$ for all $k$ sufficiently large.
    \item To extend to an open neighborhood, repeat the argument in \Cref{thm:higher_mult_nbhd}. Since $\overline{\gamma}_0$ is non-degenerate, the moduli space of geodesics in an open neighborhood of $(N_k, \overline{g}_k)$ contains only one primitive geodesic. Via the continuity in the $p$-widths, we conclude the existence of an open neighborhoood of metrics, $\overline{U}$, where $\omega_p(\R \mathbb{P}^2, g) = p \omega_1(\R \mathbb{P}^2, g)$ for all $g \in \overline{U}$.
\end{itemize}
\end{proof}
\subsection{Ellipsoids close to the round sphere}
%
In this section, we provide another example of multiplicity on an open neighborhood of metrics based at the ellipsoid
\[
E(a_1, a_2, a_3) = \{(x_1, x_2, x_3) \in \R^3 \; : \; a_1 x_1^2 + a_2 x_2^2 + a_3 x_3^2 = 1\} \subseteq \R^3
\]
using the three geodesics
\[
\gamma_i(a_1, a_2, a_3) := E(a_1, a_2, a_3) \cap \{x_i = 0\}, \;\; i = 1,2,3
\]
The following theorem is implicit in \cite{chodosh2023p}, and heavily reliant on their Theorem \ref{CMGeodesicThm}, though we record it here for full clarity:
\begin{restatable}{thmm}{ellipEx} \label{ellipEx}
For any $P > 0$ and $k > 0$, there exists an $a_1 < a_2 < a_3$ sufficiently close to $1$, along with a $C^k$ open neighborhood $U \ni E(a_1,a_2,a_3)$ such that 
\[
\forall p < P, \forall g \in U \;\; \omega_p(g) = \sum_{i = 1}^3 m_i \mathrm{length}_g(\sigma_{i,g})
\]
where $\{\sigma_{i,g}\}_{i = 1}^3$ are geodesics with respect to  $g$.
\end{restatable}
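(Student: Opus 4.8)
The plan is to combine the computation of the widths of the round sphere due to Chodosh--Mantoulidis with the regularity Theorem \ref{CMGeodesicThm}, the compactness Lemma \ref{lem:compact_var_met}, and a classification of the short closed geodesics on a triaxial ellipsoid near the round metric, in the same spirit as the limiting arguments of \S\ref{HighMultSec}. Write $\omega_q(S^2, g_{\mathrm{round}}) = 2\pi k_q$ for the explicit integers $k_q$ computed in \cite{chodosh2023p}; since only finitely many $q \in \N$ satisfy $q < P$, set $k := \max_{q < P} k_q$ and $L := 2\pi(k+1)$. For a full-measure set of parameters $a_1 < a_2 < a_3$ near $1$, the three coordinate ellipses $\gamma_i = E(a_1,a_2,a_3) \cap \{x_i = 0\}$ and all of their covers up to order $k+1$ are nondegenerate closed geodesics of $E(a_1,a_2,a_3)$: this is the statement that the double kernel of the round Jacobi operator $\phi\mapsto\phi''+\phi$ on a $2\pi m$-periodic interval splits into nonzero eigenvalues under the ellipsoidal perturbation of the induced metric and of the length, a first-order perturbation computation failing only on a closed measure-zero set (compare Remark \ref{rmk:nondegenerate}). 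Fixing such $a_i$ (close enough to $1$, to be specified), the implicit function theorem yields a $C^k$-neighborhood $U_0$ of $E(a_1,a_2,a_3)$ on which $K_g > 0$ and, for each $g \in U_0$, unique nondegenerate closed $g$-geodesics $\sigma_{i,g}$ near $\gamma_i$, depending continuously on $g$.

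The heart of the argument is the claim: for $a_i$ sufficiently close to $1$ (depending on $L$), every primitive closed geodesic of $E(a_1,a_2,a_3)$ of length at most $L$ is one of $\gamma_1, \gamma_2, \gamma_3$. For $a$ near $(1,1,1)$ the injectivity radius is close to $\pi$, so any closed geodesic has length at least $2\pi - o(1)$, and any closed geodesic of length $\le L$ subconverges (graphically, after passing to double covers if needed) as $a \to (1,1,1)$ to an $m$-fold cover of a great circle with $m \le k$. A Lyapunov--Schmidt/bifurcation analysis at the round sphere then finishes the claim: the space of unoriented great circles is $\RP^2$, the Jacobi operator of any $m$-fold cover of a great circle has the same $2$-dimensional kernel as the prime circle, and hence the closed geodesics of $E$ near an $m$-fold covered great circle correspond to critical points of a reduced functional on $\RP^2$ which to leading order is $m$ times the average of the ellipsoidal conformal factor over great circles; for the ellipsoid this functional has exactly the three coordinate circles as critical points, and by the nondegeneracy above it is Morse, so each such geodesic must coincide with the $m$-fold cover of one of $\gamma_1, \gamma_2, \gamma_3$. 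This classification passes to $g \in U_0$: if it failed, a sequence $g_j \to E$ would carry primitive closed $g_j$-geodesics of length $\le L$ distinct from all the $\sigma_{i,g_j}$, which by Lemma \ref{lem:compact_var_met} subconverge to an iterated cover of some $\gamma_i$; Lemma \ref{lem:Jacobi_fields} and Corollary \ref{GeoMultCor} then manufacture a nontrivial Jacobi field on some cover of $\gamma_i$, contradicting nondegeneracy. Hence, after shrinking $U_0$ to a neighborhood $U$, every primitive closed $g$-geodesic of length $\le L$ lies in $\{\sigma_{1,g}, \sigma_{2,g}, \sigma_{3,g}\}$ for every $g \in U$.

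To conclude, fix $p < P$ and $g \in U$. Applying the $C^0$-continuity of the widths (\cite[Lemma 2.1]{irie2018density}) between $g_{\mathrm{round}}$ and $E(a_1,a_2,a_3)$, and between $E(a_1,a_2,a_3)$ and $g$, and choosing $a_i$ close to $1$ and $U$ small, we get $\omega_p(g) < L$. By Theorem \ref{CMGeodesicThm}, $\omega_p(g) = \sum_j m_j \mathrm{length}_g(\sigma_j)$ for primitive closed $g$-geodesics $\sigma_j$ and $m_j \in \N$, and each $\mathrm{length}_g(\sigma_j) \le \omega_p(g) < L$, so the previous paragraph forces $\sigma_j \in \{\sigma_{1,g}, \sigma_{2,g}, \sigma_{3,g}\}$ for every $j$. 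Collecting equal terms gives $\omega_p(g) = \sum_{i=1}^3 m_i \mathrm{length}_g(\sigma_{i,g})$ with $m_i \in \Z_{\ge 0}$, as asserted. The main obstacle is the sharp count in the second paragraph --- that the ellipsoid near round carries \emph{exactly} three short primitive closed geodesics; for the weaker statement that the representative involves at most three distinct geodesics one could instead invoke the index bound of Theorem \ref{thm:index_upper}(a), but the exact count seems to require either the bifurcation analysis above or the corresponding computation carried out implicitly in \cite{chodosh2023p}.
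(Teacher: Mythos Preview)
Your proposal is correct and follows essentially the same architecture as the paper's proof: bound the widths using the round-sphere computation and $C^0$-continuity, classify the short closed geodesics on the near-round ellipsoid, push this classification to a $C^k$-neighborhood via the implicit function theorem together with the Jacobi-field contradiction of Lemmas~\ref{lem:compact_var_met}--\ref{lem:Jacobi_fields} and Corollary~\ref{GeoMultCor}, and finish with Theorem~\ref{CMGeodesicThm}.

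The one substantive difference is in how the key classification step is obtained. You sketch a Lyapunov--Schmidt/bifurcation analysis over the $\RP^2$ of great circles, arguing that the reduced length functional for the ellipsoidal perturbation has exactly three nondegenerate critical points; you also derive nondegeneracy of the $\gamma_i$ and their iterates by a first-order perturbation argument. The paper instead simply invokes Morse's classical result \cite[Theorems~IX~3.3,~4.1]{morse1934calculus}, which already asserts that for $a_1<a_2<a_3$ sufficiently close to $1$ every closed immersed geodesic on $E(a_1,a_2,a_3)$ below a given length bound is nondegenerate and is a multiple of one of the three coordinate ellipses. Your bifurcation sketch is in the right spirit (and is essentially what underlies Morse's proof), but citing Morse obviates the need to verify that the reduced functional on $\RP^2$ is Morse with exactly three critical points, which you state without full justification. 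Either way, the extension to an open neighborhood and the conclusion are identical to the paper's.
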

\begin{proof}
From Morse \cite[Theorems IX 3.3, 4.1]{morse1934calculus}, it was shown that for every $\Lambda > 2 \pi$, there exists an $\eps > 0$ so that if $a_1, a_2, a_3 \in [1-\eps, 1 + \eps]$ and $a_1 < a_2 < a_3$, then every closed connected immersed geodesic $\gamma \subseteq E(a_1, a_2, a_3)$ with $\mathrm{length}(\gamma) < 2 \Lambda$ is non-degenerate. Moreover, any such $\gamma$ is a multiple of $\gamma_i(a_1, a_2, a_3)$. By taking $\eps$ sufficiently small, we have that 
\[
\forall p < P, \qquad |\omega_p(E(a_1,a_2,a_3)) - \omega_p(S^2, g_{round})| \leq o(1)
\]
In particular, from \cite{chodosh2023p}
\[
\omega_p(S^2, g_{round}) = 2 \pi \lfloor \sqrt{p} \rfloor
\]
Choose $\Lambda$ so that $\Lambda > 2 \pi \lfloor \sqrt{p} \rfloor$, which provides an upper bound for the length of a single immersed geodesic realizing the $p$-width. We now claim that there exists an open neighborhood $U \ni E(a_1, a_2, a_3)$ such that 
\begin{enumerate}
    \item Any $\gamma$ with $\mathrm{length}(\gamma) < 1.5 \Lambda$ is non-degenerate
    \item Any $\gamma$ with $\mathrm{length}(\gamma) < 1.5 \Lambda$ is a multiple of one of $3$ geodesics $\sigma_i$
\end{enumerate}
We know that for $U$ sufficiently small, for each $g \in U$, there exists $3$ such geodesics $\overline{\sigma}_i$, such that 
\begin{equation} \label{lengthOfPerturbed}
|\mathrm{length}_g(\overline{\sigma}_i) - \mathrm{length}_{g_{a_1,a_2,a_3}}(\gamma_i)| \leq K ||g - g_{a_1,a_2, a_3}||_{C^2}
\end{equation}
and $\overline{\sigma}_i$ is non-degenerate. This follows from the inverse function theorem and non-degeneracy being an open condition (see also \cite{white1991space} Theorem 2.1). Suppose no such open subneighborhood exists for which property $(2)$ holds. Then there exists a sequence of metric $g_{l}$ such that $||g_{a_1, a_2, a_3} - g_{l}||_{C^k} \to 0$, along with $\sigma_{l}$ such that $\mathrm{length}(\sigma_{l}) < 1.5$ and $\sigma_{l} \not \in \{\overline{\sigma}_1,\overline{\sigma}_2, \overline{\sigma}_3\}$. By Lemma \ref{lem:compact_var_met}, we know that $\sigma_{l} \xrightarrow{C^2} \rho = m_0 \cdot \gamma_i$ for some $l$ and some $i \in \{1,2,3\}$. Without loss of generality, $i = 1$. Now for each $l$, consider the pair $(\sigma_{l}, m_0 \overline{\sigma}_1)$. By Lemma \ref{lem:Jacobi_fields}, there exists a Jacobi field supported on $m_0 \cdot \gamma_i$, a contradiction.

%
%
Applying Theorem \ref{CMGeodesicThm}, we know that 
\[
\forall g \in U,\ \forall p < P, \quad \omega_p(g) = \sum_{i = 1}^3 m_i(g) \mathrm{length}_g(\tilde{\sigma}_i)
\]
\end{proof}
Recalling the closeness of $\mathrm{length}_g(\tilde{\sigma}_i)$ to $\mathrm{length}(\gamma_i)$ in \eqref{lengthOfPerturbed}, along with the closeness of widths to those of the round metric (see \cite{marques2019equidistribution}, Lemma 1.1) we see that $m_i$ cannot all be $1$ if $p > 3$.

\section{Appendix}
We record some miscellaneous restrictions on the $p$-widths. We recall the historic result of Calabi--Cao
\begin{restatable}[Calabi--Cao, Thm D \cite{CalabiCao92}]{thmm}{CCThm} \label{CCThm}
If $g$ is a $C^3$ smooth metric on a two-sphere with non-negative curvature, then $\omega_1$ is achieved by a simple curve.
\end{restatable}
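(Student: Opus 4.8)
The plan is to realise $\omega_1$ by min-max, and then to exploit $K_g \ge 0$ through Gauss--Bonnet in order to upgrade the resulting configuration to a single embedded closed geodesic. First I would apply Theorem \ref{CMGeodesicThm} with $p = 1$ to obtain closed immersed geodesics $\sigma_1, \dots, \sigma_N$ and positive integers $m_1, \dots, m_N$ with $\omega_1(S^2, g) = \sum_{j=1}^N m_j\,\length_g(\sigma_j)$ (note $\omega_1 > 0$). The goal is to show this forces $N = 1$, $m_1 = 1$, and $\sigma_1$ embedded, so that $\omega_1$ is achieved by the simple closed geodesic $\sigma_1$. (Calabi--Cao predate Theorem \ref{CMGeodesicThm} and use classical Birkhoff min-max instead; invoking Theorem \ref{CMGeodesicThm} has the advantage of avoiding genuine geodesic networks with junctions from the outset.)

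The heart of the argument is a surgery-plus-Gauss--Bonnet estimate producing, from any ``non-simple'' realising configuration, a competitor $1$-sweepout of $S^2$ with maximal slice length strictly below $\omega_1$, contradicting that $\omega_1$ is an infimum. There are two regimes. If $N \ge 2$ or some $m_j \ge 2$, then some component has length at most $\omega_1/2$; the factor-two slack makes it straightforward to build such a sweepout by contracting (a slight push-off of) an embedded geodesic sub-loop of that component across the two disks it bounds, using Birkhoff curve-shortening, where $K_g \ge 0$ guarantees via Gauss--Bonnet that no closed geodesic inside a disk of total curvature $< 2\pi$ stalls the contraction. The genuinely curved case is $N = 1$, $m_1 = 1$ with $\sigma_1$ carrying a self-intersection at a point $p$: one splits $\sigma_1$ at $p$ into two geodesic loops $\alpha, \beta$ with $\length(\alpha) + \length(\beta) = \omega_1$, observes that transversality of the crossing makes each loop convex-cornered at $p$ so that cutting the corners by short geodesic arcs strictly shortens, and then assembles $\alpha$ and $\beta$ (suitably cut and pushed off) into disjoint embedded curves decomposing $S^2$ into two disks and an annulus, over which one runs Birkhoff-type contractions to obtain a $1$-sweepout with slices of length $\le \max(\length(\alpha), \length(\beta)) + o(1) < \omega_1$. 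Here the Gauss--Bonnet identity — which forces the two disks and the intervening annulus to have total Gauss curvature $< 2\pi$ (the annulus has total curvature exactly $\pi$ before the corner cuts) — together with $K_g \ge 0$ is precisely what keeps the intermediate slices short. In either regime we reach a contradiction, so $\omega_1$ is realised by a single embedded geodesic.

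The main obstacle, and the technically delicate part (this being essentially the content of \cite{CalabiCao92}), is the control of the Birkhoff-type contractions of the disk and annulus pieces: one must show that $K_g \ge 0$ together with the Gauss--Bonnet curvature budget of each region forces the curve-shortening process to contract fully without the length of any intermediate curve crossing the relevant threshold, in particular ruling out the process converging to a long closed geodesic inside a region. A companion bookkeeping issue is to arrange that the length gained by corner-cutting (or the factor-$2$ slack in the high-multiplicity regime) strictly dominates all the errors introduced by the push-offs and by the annular sweep. A minor further point is that one should either rely on Theorem \ref{CMGeodesicThm} to sidestep stationary geodesic networks with junction vertices, or else extend the surgery to smooth out such junctions — which $K_g \ge 0$ again permits through the same Gauss--Bonnet mechanism.
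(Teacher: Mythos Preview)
The paper does not prove this statement: it is recalled in the Appendix as a classical result of Calabi--Cao \cite{CalabiCao92}, with no argument supplied. There is therefore no paper proof against which to compare your proposal.

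Your sketch is broadly in the spirit of the original Calabi--Cao argument (cut at a self-intersection, use corner-rounding and Gauss--Bonnet with $K_g \geq 0$ to build a shorter competitor), with the twist of invoking the much later Theorem \ref{CMGeodesicThm} to start from a union of immersed closed geodesics rather than a general geodesic network. You correctly identify the main obstacle --- controlling the Birkhoff-type contractions so that no intermediate slice exceeds the threshold --- but a few points need more care. In the $N \geq 2$ or $m_j \geq 2$ case you propose to contract an ``embedded geodesic sub-loop'' of a component of length $\leq \omega_1/2$; however an immersed closed geodesic need not contain an embedded \emph{geodesic} sub-loop (the innermost loop has a convex corner), so this case must also pass through the corner-cutting mechanism rather than being ``straightforward''. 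More seriously, in the self-intersection case your proposed $1$-sweepout has a moment where both rounded loops $\alpha', \beta'$ are simultaneously present (bounding the annulus), at which point the slice length is $\omega_1$ minus only the corner-cutting saving; showing that this strict gap persists through the entire annular sweep, and that neither disk contraction stalls at a long closed geodesic inside its region, is precisely the delicate content you defer to \cite{CalabiCao92}. Your proposal is thus less an independent proof than a re-organisation of Calabi--Cao's argument around Theorem \ref{CMGeodesicThm}.
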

Here, \textit{simple} means no self-intersections. As a byproduct of this theorem and also Guth's construction of a $p$-sweepout \ref{GuthEx}, we conclude the following
\begin{restatable}{corr}{weakMultBound} \label{weakMultBound}
Let $\omega_1 = \mathrm{length}(\gamma_1)$. Suppose $\omega_p$ is achieved by
\[
\gamma_p = \sum_{i = 1}^{N_p} m_{i,p} \gamma_{i,p}
\]
Then $\sum_{i = 1}^{N_p} m_{i,p} \leq p$, and equality holds if and only if $\mathrm{length}(\gamma_{i,p}) = \mathrm{length}(\gamma_1)$
\end{restatable}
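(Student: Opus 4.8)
The plan is to sandwich $\omega_p$ between $p\,\omega_1$ from above (Guth's construction) and $\big(\sum_i m_{i,p}\big)\omega_1$ from below (which is where the non-negativity of the curvature enters), and then read everything off from the resulting chain of inequalities.

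First I would record the Guth upper bound $\omega_p\le p\,\omega_1$, which is exactly \eqref{pWidthUpper} from \S\ref{GuthEx}. Next I would establish the lower bound I really need: on $(S^2,g)$ with $K_g\ge 0$, every closed geodesic $\sigma$ satisfies $\mathrm{length}(\sigma)\ge\omega_1$. This is the geometric heart of the argument and is essentially built into Calabi--Cao's proof of \Cref{CCThm}: their analysis shows the shortest closed geodesic on a non-negatively curved $S^2$ is simple, and a simple closed geodesic $\sigma$ separates $S^2$ into two disks $D_1,D_2$, each satisfying $\int_{D_j}K_g=2\pi$ by Gauss--Bonnet; the curves equidistant to $\sigma$ then foliate each $D_j$ and have non-increasing length (the length of the equidistant curve has derivative equal to $2\pi$ minus the total curvature of the region already swept, hence $\le 0$ once $K_g\ge 0$), so this equidistant foliation yields a $1$-sweepout of $S^2$ whose maximal mass is $\le\mathrm{length}(\sigma)$. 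Consequently $\omega_1\le\mathrm{length}(\sigma_0)$ for the (simple) shortest closed geodesic $\sigma_0$, and therefore $\omega_1\le\mathrm{length}(\sigma)$ for every closed geodesic $\sigma$. If one prefers to avoid re-deriving this, the statement "$\omega_1$ equals the length of the shortest closed geodesic, which is simple" can simply be quoted from \cite{CalabiCao92}.

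With both bounds in hand, write $\omega_p=\sum_{i=1}^{N_p}m_{i,p}\,\mathrm{length}(\gamma_{i,p})$ and use $\mathrm{length}(\gamma_{i,p})\ge\omega_1$ for each (geodesic) $\gamma_{i,p}$ to get
\[
\Big(\sum_{i=1}^{N_p}m_{i,p}\Big)\omega_1\ \le\ \sum_{i=1}^{N_p}m_{i,p}\,\mathrm{length}(\gamma_{i,p})\ =\ \omega_p\ \le\ p\,\omega_1,
\]
so $\sum_i m_{i,p}\le p$ after dividing by $\omega_1>0$. For the equality statement: if $\sum_i m_{i,p}=p$, then both inequalities above must be equalities, and the left one reads $\sum_i m_{i,p}\big(\mathrm{length}(\gamma_{i,p})-\omega_1\big)=0$; since each summand is non-negative and each $m_{i,p}\ge 1$, this forces $\mathrm{length}(\gamma_{i,p})=\omega_1=\mathrm{length}(\gamma_1)$ for all $i$ (and in particular $\omega_p=p\,\omega_1$, i.e.\ Guth's bound is attained). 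Conversely, once $\mathrm{length}(\gamma_{i,p})=\mathrm{length}(\gamma_1)$ for all $i$ one has $\omega_p=\big(\sum_i m_{i,p}\big)\omega_1$, so in that case $\sum_i m_{i,p}=p$ holds exactly when Guth's bound $\omega_p\le p\,\omega_1$ is sharp.

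I expect the only non-bookkeeping step to be the lower bound $\mathrm{length}(\sigma)\ge\omega_1$, which is precisely where the hypothesis $K_g\ge 0$ is used; the mild technical point there is the possible cut locus / focal points in the equidistant foliation of the disks $D_j$, which (as in Calabi--Cao) is handled by working with flat chains rather than smooth curves so that the sweepout stays continuous through the degeneration. Everything else is a direct manipulation of the displayed chain of inequalities.
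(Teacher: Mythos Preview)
Your proposal is correct and follows the same approach as the paper: the paper's one-line proof simply invokes Guth's construction to get $\omega_p\le p\,\omega_1$, leaving implicit the lower bound $\mathrm{length}(\gamma_{i,p})\ge\omega_1$ that comes from Calabi--Cao (the corollary is stated immediately after \Cref{CCThm} as a consequence of it). You spell out this implicit step carefully via the equidistant sweepout from the simple shortest geodesic, which is exactly the intended mechanism. You also correctly observe that the ``if'' direction of the equality statement only yields $\sum_i m_{i,p}=p$ under the additional assumption that Guth's bound is attained; the paper's phrasing is slightly loose on this point (e.g.\ on the round sphere $\omega_4=2\omega_1$ is achieved by two great circles, so all components have length $\omega_1$ yet $\sum m_{i,4}=2<4$).
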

\begin{proof} 
Apply Guth's $p$-sweepout construction to the optimal $1$-sweepout, $p$ times as a competitor. \end{proof}
We can also leverage \textit{bounded} curvature to give injectivity radius bounds. This ends up allowing us to restrict the number of edges. Again, let $\gamma_p = \sum_{i = 1}^{N_p} m_{i,p} \gamma_{i,p}$ be the union of geodesics achieving $\omega_p$. Consider the graph, $G(\gamma_p)$, where a vertex corresponds to $x \in \Gamma$ such that $\ord(x) \geq 2$ and the edges correspond to the geodesics (contained in $\gamma_p$) between two points $x,y$ with $\ord(x), \ord(y) \geq 2$.  In this definition, we will count edges \textit{with multiplicity}, but vertices \textit{without multiplicity}.
\begin{restatable}{prop}{edgeBound} \label{edgeBound}
Suppose $(M^2, g)$ with $1 \geq K \geq c_0$. Let $e_G$ be the number of edges in $G(\gamma_p)$ as above. Then 
\[
e_G \leq \frac{p \cdot \omega_1}{\pi}
\]
\end{restatable}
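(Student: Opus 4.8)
Since $1\geq K\geq c_0>0$ forces $M$ to be diffeomorphic to $S^2$ or $\RP^2$, it suffices to treat the orientable case $M=S^2$ (for $\RP^2$ one runs the argument on the orientable double cover). The single nontrivial input is an injectivity radius bound: as $S^2$ is orientable, even-dimensional, and $0<K\leq\sup_M K\leq 1$, Klingenberg's injectivity radius estimate gives $\mathrm{inj}(M,g)\geq \pi/\sqrt{\sup_M K}\geq \pi$. I would extract two consequences. First, every closed geodesic $\gamma$ in $(M,g)$ has $\length(\gamma)\geq 2\,\mathrm{inj}(M,g)\geq 2\pi$: otherwise, taking $p=\gamma(0)$ and $q=\gamma(\length(\gamma)/2)$, the two half-arcs of $\gamma$ would be distinct minimizing geodesics from $p$ to $q$ with $d(p,q)<\mathrm{inj}(M,g)$, contradicting uniqueness of minimizers within the injectivity radius together with smoothness of $\gamma$. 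Combined with \Cref{CCThm} (which applies since $K>0$, so $\omega_1$ is realized by a simple closed geodesic), this yields $\omega_1(M,g)\geq 2\pi$. Second, running the same argument on a geodesic loop shows any closed geodesic possessing a self-intersection splits into two geodesic loops, each of length $\geq 2\,\mathrm{inj}\geq 2\pi$, hence has length $\geq 4\pi$.

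Next I would set up the counting. Take the representative $\gamma_p=\sum_{j=1}^{N(p)}m_{p,j}\gamma_{p,j}$ of $\omega_p$ produced by \Cref{thm:index_upper}, so in particular $\sum_{v\in\mathrm{Vert}(\{\gamma_{p,j}\}_j)}\binom{\ord(v)}{2}\leq p$. For each $j$ let $n_j$ denote the number of parameter values at which $\gamma_{p,j}$ meets $\mathrm{Vert}(\{\gamma_{p,j}\}_j)$; these subdivide $\gamma_{p,j}$ into $n_j$ geodesic arcs, so counting edges with multiplicity one has $e_G=\sum_j m_{p,j}n_j$. Since every vertex-visit of every $\gamma_{p,j}$ is counted exactly once on the left, $\sum_j n_j=\sum_{v}\ord(v)$, where $\ord(v)\geq 2$ at each vertex. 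Using $\binom{n}{2}\geq \tfrac{n}{2}$ for every integer $n\geq 2$, the vertex bound gives
\[
\sum_j n_j=\sum_v \ord(v)\leq 2\sum_v\binom{\ord(v)}{2}\leq 2p.
\]

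Finally I would conclude. When all multiplicities are $1$, $e_G=\sum_j n_j\leq 2p$, and since $\omega_1\geq 2\pi$ from the first step this is $e_G\leq 2p=\tfrac{2\pi p}{\pi}\leq \tfrac{p\,\omega_1}{\pi}$, as claimed. For the general case one feeds the length bounds of the first step into $e_G=\sum_j m_{p,j}n_j$: every $\gamma_{p,j}$ contributing to $e_G$ has $\length(\gamma_{p,j})\geq 2\pi$ (and $\geq 4\pi$ if it reaches a vertex through one of its own self-intersections), while $\sum_j m_{p,j}\length(\gamma_{p,j})=\omega_p\leq p\,\omega_1$ by Guth's sweepout construction (\S\ref{GuthEx}); playing these against $\sum_j n_j\leq 2p$ yields the bound. \textbf{The main obstacle} is exactly this last bookkeeping: a single multiply-covered geodesic that also carries many vertices has to be controlled by balancing its length lower bound $2\pi$ (resp.\ $4\pi$ in the self-intersecting case) against $\omega_p\leq p\,\omega_1$, and arranging that the interaction produces precisely the factor $p\,\omega_1/\pi$ rather than something weaker. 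In the multiplicity-one case, which already covers the generic situation of \Cref{GenSelfIntThm}, the estimate is immediate from the two displayed inequalities.
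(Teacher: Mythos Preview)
Your route is quite different from the paper's and, as you yourself flag, does not close in the general-multiplicity case.

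The paper's proof is a two-line argument that bypasses the vertex bound of \Cref{thm:index_upper} entirely. From $K\leq 1$ one gets $\mathrm{inj}(M,g)\geq\pi$. The paper then uses this at the level of \emph{edges}: each edge $e$ of $G(\gamma_p)$ is taken to have $\mathrm{length}(e)\geq\pi$, so that summing over edges (counted with multiplicity, as in the convention stated just before the proposition) yields
\[
\omega_p=\mathrm{length}(\gamma_p)=\sum_{e\in G(\gamma_p)}\mathrm{length}(e)\geq e_G\cdot\pi,
\]
and combining with $\omega_p\leq p\,\omega_1$ from Guth's construction gives $e_G\leq p\,\omega_1/\pi$ at once. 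No case split on multiplicity is needed, because the multiplicities are already built into the edge count.

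You instead use the injectivity-radius bound only to extract $\omega_1\geq 2\pi$, and then attempt to control $e_G=\sum_j m_{p,j}n_j$ via the weighted-vertex estimate $\sum_j n_j\leq 2p$ coming from \Cref{thm:index_upper}. This is clean when all $m_{p,j}=1$, but the general step you describe---``playing'' $\omega_p\leq p\,\omega_1$ against $\sum_j n_j\leq 2p$---does not actually combine into the claimed inequality. A single geodesic carrying multiplicity $m$ and $n$ vertex-visits contributes $mn$ to $e_G$ but only roughly $2\pi m$ to the length side, and the two separate bounds $n\leq 2p$ and $2\pi m\leq p\,\omega_1$ do not yield $mn\leq p\,\omega_1/\pi$. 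This is a genuine gap, not just bookkeeping.

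The idea you are missing is precisely to apply the injectivity-radius lower bound to the individual geodesic arcs that form the edges, rather than only to whole closed geodesics. Once every edge has length at least $\pi$, the total-length inequality $\omega_p\leq p\,\omega_1$ controls $e_G$ directly and the multiplicity issue evaporates.
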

\begin{proof}
For manifolds with $K \leq 1$, the injectivity radius is at least $\pi$ by the Rauch comparison theorem. Thus 
\[
\mathrm{length}(\gamma_p) = \sum_{e \in G(\gamma_p)} \mathrm{length}(e) \geq e_G \cdot \pi 
\]
using the upper bound of $\omega_p \leq p \cdot \omega_1$. \end{proof} 
We note that a similar bound can be deduced from the index bound \Cref{thm:index_upper}, and lower bounds on curvature.
\begin{restatable}{prop}{lengthUpper} \label{lengthUpper}
Suppose $(M^2, g)$ with $K \geq K_0 > 0$. Then any $\{\gamma_{i,p}\}$ can be length at most $\frac{\pi p}{\sqrt{K_0}}$.
\end{restatable}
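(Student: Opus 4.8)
The plan is to combine the index bound from \Cref{thm:index_upper} with a classical lower bound for the index of a long geodesic in a manifold of positively pinched curvature. The key classical input is the Morse index theorem together with Sturm comparison: if $\gamma$ is a geodesic of length $\ell$ in $(M^2,g)$ with Gauss curvature $K \geq K_0 > 0$, then the number of conjugate points of $\gamma$ (counted with multiplicity) along a parametrization by arclength is at least $\lfloor \ell\sqrt{K_0}/\pi \rfloor$, since the Jacobi equation $\phi'' + K\phi = 0$ oscillates at least as fast as $\phi'' + K_0\phi = 0$, which has zeros spaced $\pi/\sqrt{K_0}$ apart. For a \emph{closed} geodesic traversed once, one must be slightly careful about whether to count the index of the loop or of the corresponding path; in either case the index grows linearly, and the precise constant one gets is $\mathrm{index}(\gamma) \geq \lfloor \ell\sqrt{K_0}/\pi\rfloor - 1$ (the $-1$ accounting for the closing-up condition and for the possible one-dimensional space of Jacobi fields that are not strictly negative directions).

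First I would make this index lower bound precise: for a single primitive closed geodesic $\gamma$ of length $\ell$, writing normal variations as $X = \phi\nu$, the second variation form from \Cref{prop:variation_formulas} is $Q_\gamma(\phi,\phi) = \int_{\gamma}((\phi')^2 - K\phi^2)$, which is dominated by $\int_{\gamma}((\phi')^2 - K_0\phi^2)$. Decomposing $[0,\ell]$ into $m = \lfloor \ell\sqrt{K_0}/\pi\rfloor$ disjoint subintervals each of length $\pi/\sqrt{K_0} - \epsilon$ and taking on each a bump function vanishing at the endpoints, the first Dirichlet eigenvalue of $-\partial_s^2$ on such an interval is just above $K_0$, so $Q_\gamma$ is negative on each bump; these are supported on disjoint intervals, hence span an $m$-dimensional negative subspace, giving $\mathrm{index}(\gamma) \geq m$ (up to the boundary caveat above). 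Second, I would invoke \Cref{thm:index_upper}(a): the $p$-width is achieved by $\{\sigma_{p,j}\}$ with $\sum_j \mathrm{index}(\sigma_{p,j}) \leq p$. Hence each individual $\sigma_{p,j}$ has index at most $p$, so $\lfloor \mathrm{length}(\sigma_{p,j})\sqrt{K_0}/\pi\rfloor \leq p$ (possibly $p+1$ with the boundary correction), which rearranges to $\mathrm{length}(\sigma_{p,j}) \leq \pi(p+1)/\sqrt{K_0}$, or $\pi p/\sqrt{K_0}$ with the sharp bookkeeping.

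The main obstacle I anticipate is pinning down the exact constant, i.e. justifying $\mathrm{length}(\sigma_{p,j}) \leq \pi p/\sqrt{K_0}$ rather than merely $\mathrm{length}(\sigma_{p,j}) \leq C\pi p/\sqrt{K_0}$ for a slightly larger constant. This requires care about: (i) whether the bump-function subspace can be improved to gain one more negative direction by a global (periodic) variation, which is where the ``$+1$'' typically comes from in index counts on closed geodesics; (ii) the fact that strict inequality $K \geq K_0$ versus $K > K_0$ affects whether the endpoint bumps are strictly negative; and (iii) the case where $\sigma_{p,j}$ is one-sided, where one should pass to the orientation double cover, doubling the length but the relevant second-variation problem is on the double cover. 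I would handle (iii) by noting a one-sided geodesic of length $\ell$ has a two-sided double cover of length $2\ell$ whose index (in the appropriate sense of \Cref{thm:index_upper}) is still at most $p$, which if anything improves the bound. For (i) and (ii), the cleanest route is to cite the Morse index theorem for closed geodesics directly (e.g. as in do Carmo or Klingenberg) combined with Sturm comparison, rather than reproving it; this gives $\mathrm{index}(\gamma) \geq \lceil \ell\sqrt{K_0}/\pi\rceil - 1 \geq \ell\sqrt{K_0}/\pi - 1$, and then $\mathrm{length}(\sigma_{p,j}) \leq \pi(p+1)/\sqrt{K_0}$; to reach exactly $\pi p/\sqrt{K_0}$ one uses that equality in Sturm forces $K \equiv K_0$ along $\gamma$ and a rigidity argument, or one simply accepts the statement is meant up to this standard off-by-one and states it as $\leq \pi p/\sqrt{K_0}$ understanding $\mathrm{index} \leq p$ as the binding constraint on the oscillation count. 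I would write the final argument in the former, clean form, flagging the constant as the one point requiring the classical index theorem.
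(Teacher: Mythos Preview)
Your proposal is correct and follows essentially the same route as the paper: both combine the index upper bound $\mathrm{index}(\gamma_{i,p})\le p$ from \Cref{thm:index_upper}(a) with a lower bound on the Morse index of a long geodesic coming from the curvature pinching $K\ge K_0>0$. The only difference is in the choice of test functions: where you use bump functions on $\lfloor \ell\sqrt{K_0}/\pi\rfloor$ disjoint subintervals (equivalently, conjugate-point counting via Sturm), the paper plugs in the explicit family $f_k(s)=\sin(k\pi s/\ell)$ for $k=1,\dots,p$ together with the constant function $1$, and observes that if $\ell>p\pi/\sqrt{K_0}$ then $Q(f_k,f_k)\le(\ell/2)\big[(k\pi/\ell)^2-K_0\big]<0$ for every $k\le p$, yielding $p{+}1$ independent negative directions and contradicting $\mathrm{index}\le p$. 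The extra constant function $1$---always strictly negative since $Q(1,1)=-\!\int_\gamma K<0$---is exactly the device that absorbs the ``off-by-one'' you flagged, so you can dispense with the Morse index theorem and the rigidity discussion; your one-sided caveat is likewise not addressed in the paper's (appendix-level) sketch.
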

\begin{proof}
Consider the second variation among normal perturbations of $\gamma_{i,p}$
\[
Q(f, f) = \int_{\gamma_{i,p}} |\nabla f|^2 - K |f|^2
\]
Consider $f_k = \sin ( k \pi x / \ell_{i,p})$ where $\ell_{i,p} = \mathrm{length}(\gamma_{i,p})$, then
\[
Q(f_k, f_k) \leq C \cdot [ k^2 \pi^2 / \ell_{i,p}^2 - \inf_{\gamma_{i,p}} K ]
\]
When $k = p$, the above must be $\geq 0$, else $\{1, f_1, \dots, f_{p}\}$ would contradict the index bound of \Cref{thm:index_upper}. Thus, we conclude
\[
\ell_{i,p} \leq \frac{\pi p}{K_0}
\]
\end{proof}
These propositions may be useful to rule out the presence of geodesic flowers (see e.g. \cite{chambers2023geodesic}), though one may hope do this by connecting $\omega_1$ to $K_0$ and then applying Guth's $p$-sweepout construction.
\bibliographystyle{amsalpha}
\bibliography{bib.bib}

\end{document}